\newtheorem{lemma}{Lemma}
\newtheorem{corollary}{Corollary}
\newtheorem{theorem}{Theorem}
\newtheorem{proposition}{Proposition}
\newtheorem{othertheorem}{Theorem}
\theoremstyle{definition}
\newtheorem{definition}{Definition}
\newcommand{\ISE}{{\scriptscriptstyle{\mathrm{ISE}}}}
\author[Guillaume Chapuy]{Guillaume Chapuy $^1$}
\thanks{$^1$Laboratoire d'Informatique de l'\'{E}cole Polytechnique, 91128
Palaiseau Cedex, France. \tt guillaume.chapuy@lix.polytechnique.fr}
\title[The structure of unicellular maps]{The structure of unicellular maps, 
and a~connection~between~maps~of~positive~genus and~planar labelled trees.}
\begin{document}

\begin{abstract}
A unicellular map is a map which has only one face. We give a
bijection between a dominant subset of rooted unicellular maps of given genus
and a set of rooted plane trees with distinguished vertices. The bijection applies as
well to the case of labelled unicellular maps, which are related to all rooted maps by
Marcus and Schaeffer's bijection.

This gives an immediate derivation of the asymptotic number of unicellular maps
of  given genus, and a simple bijective proof of a formula of Lehman and Walsh
on  the number of triangulations with one vertex. From the labelled case, we
deduce   an expression of the asymptotic number of maps of genus $g$ with $n$
edges involving the ISE random measure, and an explicit characterization of the
limiting  profile and radius of random bipartite quadrangulations of genus $g$ 
in terms of the ISE.
\end{abstract}

\maketitle
\date{\today}

\section{Introduction.}

The enumerative study of orientable maps, or graphs embedded on orientable
surfaces, has been an important domain of mathematics since the works of
Tutte in the sixties~(\cite{Tutte:census}).
In the last ten years, many progresses have been made in the
combinatorial and statistical study of \emph{planar maps}, or graphs drawn on the
sphere, thanks to increasingly powerful bijective techniques, originating
in Schaeffer's thesis~(\cite{Sc:PhD}). All these techniques
rely on canonical decompositions of maps into suitable classes of \emph{plane
trees}, which are much more easily amenable to mathematical analysis than maps
themselves (see for example \cite{MBM-Sc,BDFG:census,BDFG}).
Beyond strong enumerative results, these methods enabled to finely describe the
metric properties of large random planar maps (for
example \cite{ChassaingSc,BDFG:distances,Miermont:Geodesiques,BG:3points}),
opening the way to the probabilistic study of a possible limiting continuum object, 
the \emph{Brownian planar map}, related to the continuum random tree and the
Brownian snake (\cite{Marckert-Mokkadem-BrownianMap,LG,LGPa,LG:Geodesiques}).

On the other hand, maps of positive genus (roughly speaking, graphs embedded on
a torus with $g$ handles), have been considered by several authors, mainly from
the enumerative viewpoint. Using an extension of Tutte's method, Bender and
Canfield (\cite{BeCa0}) showed that the number of rooted maps of fixed genus $g$ with $n$ edges is asymptotically
equivalent to $t_g n^{5(g-1)/2}12^n$, when $n$ tends to infinity. Here $t_g$ is
a constant, which plays an important role in theoretical physics and in
geometry (see \cite{LaZv}). It is known, from matrix integrals techniques,
that the numbers $(t_g)_{g\geq 1}$ satisfy remarquable non linear recurrence relations,
related to the {Painlev\'e\nobreakdash-I} equation (\cite{LaZv}, page 201,
or the recent equivalent results in \cite{Goulden-Jackson:tg,Bender-Gao-Richmond:tg}).
There is however, no combinatorial explanation of those properties.

From the bijective side, the
Marcus-Schaeffer bijection (\cite{MaSc}) relates maps of positive genus to
labelled \emph{unicellular maps}, or maps with one face, of the \emph{same
genus}. Thanks to this bijection and its generalizations, it has been possible
recently to re-derive the counting exponent $n^{5(g-1)/2}$ for several families
of rooted maps (\cite{ChMaSc,Chapuy:constellations}), and to exhibit certain metric
properties of large random maps of given genus (\cite{Miermont:Geodesiques}).
However, these approaches are not sufficient to completely describe the combinatorics of maps of genus $g$: a lot of 
the structure is still hidden in
the unicellular maps, which contain, in a sense, all the information specific
to the genus. For example, these methods give the constant
$t_g$ as a sum, over a finite number of cases, of a complicated
combinatorial quantity, and it seems difficult to analyse or to compute
the $t_g$'s in this way.

The purpose of this paper is to ``break the genus'': we give a
bijective construction that relates unicellular maps of fixed genus to 
suitably decorated \emph{plane} trees (precisely, our construction concerns
only a dominating proportion of unicellular maps of given genus, so that 
we obtain mainly \emph{asymptotic} results).
The first consequence of our construction is the bijective derivation of the
asymptotic number of unicellular maps of fixed genus. The result itself is well
known (the numbers are even \emph{exactly} known, see
\cite{Harer-Zagier,Jackson:countingcycles,Goupil-Schaeffer}), but our
derivation is elementary and gives a more constructive viewpoint. Moreover, it
adapts to the case of \emph{labelled} unicellular maps, 
and implies, thanks to the Marcus-Schaeffer
bijection, new relations between maps of positive genus and \emph{plane}
labelled trees. 
For example, we relate the constant $t_g$ to the $g$-th moment
of the random variable $\int_{-\infty}^\infty
f_{{\scriptscriptstyle\mathrm{ISE}}}(x)^3dx$, where $f_\ISE$ is the
density of the random probability measure ISE (\emph{Integrated
Superbrownian Excursion}, which describes the limiting repartition
of labels in a large random labelled tree, see
\cite{Aldous-treebasedmodels,MBM-Janson}). We obtain: $$t_g = \frac{2}{2^{5g/2}g!\sqrt{\pi}}\mathbb{E}\left[\left(\int_{-\infty}^\infty
f_{{\scriptscriptstyle\mathrm{ISE}}}(x)^3dx\right)^g\right]. $$
Observe that, contrarily to
\cite{ChMaSc,Miermont:Geodesiques}, this formula does not
involve any summation nor case disjonction. Moreover, it makes a connection
with the theory of superprocesses, hopefully opening the way to a better
understanding of the sequence $(t_g)_{g\geq 1}$ and its recurrence properties.
Finally, as a last consequence of our bijection, we prove the
convergence of the normalized profile and radius of pointed
quadrangulations of fixed genus, and we characterize their limit
in terms of the ISE.

{\bf Organization of the paper:} in Section~\ref{sec:maps}, we give formal
definitions related to maps; in Section~\ref{sec:scheme}, we give a
description of unicellular maps in terms of \emph{schemes}, and we identify the
dominating cases (this is already contained in \cite{ChMaSc});
Section~\ref{sec:key} contains the key combinatorial lemma, which leads
to the description of the bijection in Section~\ref{sec:opening}; finally,
Section~\ref{sec:labelled} studies the case of labelled unicellular maps and
gives our new expression of $t_g$, while Section~\ref{sec:profile} is devoted to
the convergence of the profile and radius of pointed quadrangulations.

\section{Maps.}

\label{sec:maps}
\subsection{Definitions}

We begin with a combinatorial definition of a map.
\begin{definition}
Let $n$ be a positive integer. A \emph{map of size $n$} is a triple
$\mathfrak{m}=(\alpha,\beta,\gamma)$ of permutations of
$\llbracket 1,2n\rrbracket$, such that:
\begin{itemize}
  \item $\beta \alpha = \gamma$
  \item $\alpha$ is an involution without fixed points (i.e. all its cycles have
  length $2$)
\end{itemize}
The orbits of $\llbracket 1,2n\rrbracket$ under the action of the subgroup of
$\mathfrak{S}_{2n}$ generated by $\alpha$, $\beta$ and $\gamma$ are called the \emph{connected
components} of $\mathfrak{m}$. If this action is transitive, we say that
$\mathfrak{m}$ is \emph{connected}.
\end{definition}
We use the cycle notation for permutations. For example, the permutation
$(1,4,3)(2,$ $5,6)$ of $\llbracket 1,6\rrbracket$ sends $1$ to $4$, $6$ to $2$,
etc\ldots The number of cycles of a permutation $\sigma$ is denoted $|\sigma|$.
A map $\mathfrak{m}=(\alpha, \beta, \gamma)$ being given, the cycles of
$\alpha$, $\beta$ and $\gamma$ are called its \emph{edges}, \emph{vertices} and
\emph{faces}, respectively. The size of $\mathfrak{m}$ (i.e. its number $n$ of
edges) is denoted $|\mathfrak{m}|$.

\begin{figure}[h]
\centerline{\includegraphics[scale=1.0]{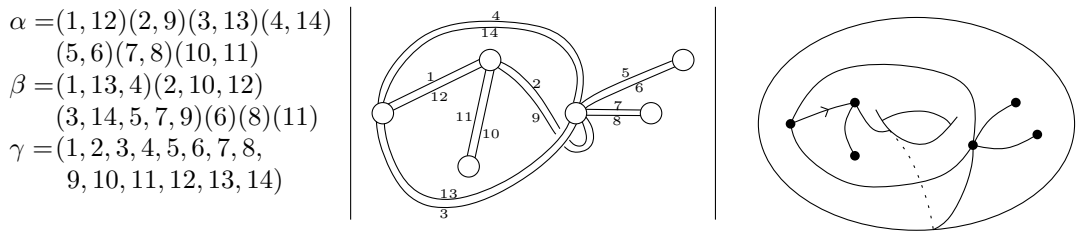}}
\caption{Three different pictures of the same rooted unicellular map.}
\label{fig:fatgraph}
\end{figure}
The definition of a map has a graphical interpretation in terms of labelled fat
graphs (see Figure~\ref{fig:fatgraph}). Roughly speaking, a fat graph is a graph (with
loops and multiple edges allowed), with a prescribed cyclic order of the edges
around each vertex.
Given a map $\mathfrak{m}=(\alpha, \beta, \gamma)$, 
consider the graph $G$
with vertex set the set of cycles of $\beta$, edge set the set of cycles of
$\alpha$, and the natural incidence relation $v \sim e$ if $v$ and $e$ share an
element. 
Now, draw the graph $G$ as follows\footnote{We warn the reader that in another
(an maybe more used) graphical interpretation of maps, edges are cut in their middle and not lengthwise:
 be careful to any confusion.}:
\begin{itemize}
  \item draw edges as ribbons, i.e. such that each edge is split lengthwise
  into two half-edges. These half-edges are labelled by elements of
  $\llbracket 1,2n \rrbracket$
  \item by convention, if a half-edge $i$ belongs to a vertex $v$, then when
  leaving the vertex $v$ by the edge $(i,\alpha(i))$, one sees $\alpha(i)$ on
  the right of $i$.
  \item around each vertex $v$ the half-edges belonging to $v$ read in
  clockwise order are given by the cycle representation of $v$.
\end{itemize}
Observe that the permutation
$\gamma=\beta\alpha$ interprets as follows: start at an half-edge, visit its
associated (opposite) half-edge, and then turn once clockwise around the
vertex.
By repeating this operation, one walks along the half-edges of the graph
without crossing them, so that the cycles of $\gamma$ correspond to the
\emph{borders} of the fat graph. Observe also that a map is connected if and
only if its associated graph is.

The third definition of a map is topological. A map can be defined as a proper
 embedding of a graph (with loops and multiple edges allowed) in a
 compact orientable surface, such that its complementary is a disjoint union of simply
 connected domains (called the faces), and considered up to
 oriented homeomorphism. If furthermore the half-edges are labelled, these
 objects are in bijection with fat graphs (intuitively, one passes
 from a fat graph to a map by gluing a topological polygon along each border,
 hence creating a face; a general account on the equivalence between the three
 definitions of a map can be found in \cite{Mohar-Thomassen}). In particular,
 there is only one orientable surface into which $\mathfrak{m}$ can be properly embedded. 
 If $\mathfrak{m}$ is connected, the genus $g$ of this
 surface is called the \emph{genus} of $\mathfrak{m}$, and recall that we have from Euler
characteristic formula:
$$
|\beta|+|\gamma|=|\alpha|+2-2g
$$

A \emph{unicellular map} is a map which has only one face. Observe that a
unicellular map is necessarily connected. A plane tree is a unicellular map of
genus $0$ (this matches the classical definition, but be careful that this
excludes the tree reduced to a single vertex). Observe
that if a unicellular map has genus $g$, $v$ vertices and $n$ edges, one has: $
n=2g-1+v
$ 
so that the graphs of unicellular maps of positive genus always contain cycles,
and are never trees, in the graph sense.

The \emph{root} of a map is the half-edge numbered $1$. The \emph{root vertex}
(resp. \emph{root edge}) is the vertex (resp. \emph{edge}) containing the root
half-edge. In the topological representation of a map, we represent the root as 
an arrow drawn on the root edge that leaves the root vertex.
A \emph{rooted map} is an equivalence class of maps up to
relabeling of $\llbracket 2,2n\rrbracket$ (i.e. an orbit under the action of
$\mathrm{Stab}(1)$ by conjugation). In the case of unicellular maps, it will
often be convenient to fix a representative: the \emph{canonical representative} of a rooted unicellular map 
is its only representative that satisfies $\gamma = (1,2,\ldots,2n)$.

Finally, we let $\mathcal{U}_{g,n}$ be the set of all rooted unicellular maps of
genus $g$ with $n$ edges, and $\mathcal{U}_g=\cup_n\mathcal{U}_{g,n}$. 
That last notational convention will be used all through the paper: if
$\mathcal{C}_g$ is a class of maps of genus $g$ (where $\mathcal{C}$ may be
replaced by any letter), $\mathcal{C}_{g,n}$ denotes the set of elements of
$\mathcal{C}_g$ with $n$ edges.

\subsection{The slicing and gluing operations}

We now define two operations that we will be essential later.

\subsubsection{slicing a vertex}

Let $\mathfrak{m}=(\alpha,\beta,\gamma)$ be a map, and let 
$v=(i_1,\ldots,i_k)$ be a vertex of $\mathfrak{m}$ of degree $k$.
Let $C$ be a subset of $\{i_1,\ldots,i_k\}$ of cardinality $p$. Up to a cyclic
change in the writing of $v$, we can assume that $i_1\in C$, and write:
$C=\{i_1,i_{l_2},\ldots,i_{l_p}\}$ with $1<l_2<\ldots<l_p \leq k$.
The \emph{slicing} of $v$ by $C$ is the
permutation $\bar{v}$ of $\{i_1,i_2,\ldots,i_k\}$ whose cyclic representation
is : $$
\bar{v}=(i_1, \ldots, i_{l_2-1})
(i_{l_2}, \ldots, i_{l_3-1})
\ldots\ldots
(i_{l_{p}}, \ldots, i_{k})
$$
Let $\bar{\beta}$ be the permutation obtained by replacing $v$ by $\bar v$ in
the cycle representation of $\beta$, and let $\bar \gamma =
\bar\beta\alpha$. We say that the map $\bar m = (\alpha,\bar \beta,
\bar\gamma)$  has been obtained from $\mathfrak{m}$ by the slicing of $v$ by $C$. Observe that the map $\bar{\mathfrak{m}}$ is not necessarily connected, even if
the map $\mathfrak{m}$ is.

The slicing of a vertex is easily interpreted in terms of fat graphs. Given a
vertex $v$ and a set $C$ of $p$ half-edges incident to $v$, replace $v$ by $p$ new
vertices, each incident to one half-edge of $C$. Then, dispatch the other
half-edges by keeping the general cyclic order, and such that the elements of
$C$ have no half-edges on their right. See Figure~\ref{fig:slicing-gluing}.
\begin{figure}[h]
\centerline{\includegraphics[scale=1.0]{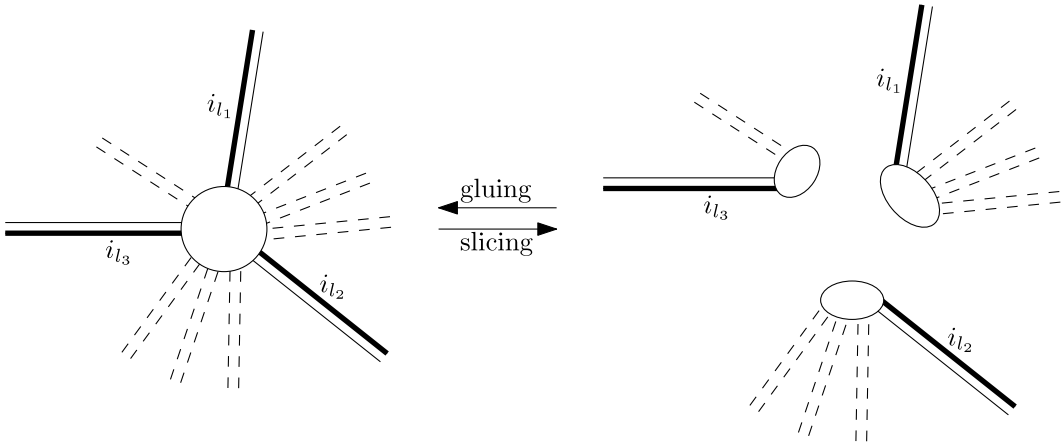}}
\caption{The gluing and slicing operations for $3$ half-edges.}
\label{fig:slicing-gluing}
\end{figure}

\subsubsection{gluing half-edges}

Let $\mathfrak{m}=(\alpha,\beta,\gamma)$ be a map, and let 
$c=(i_1,i_2,\ldots,i_k)$ be an \emph{ordered}  $k$-tuple of half-edges incident to
different vertices. 
Then, each $i_l$ is incident to a vertex $v_l$, of cycle
representation: $v_l = (i_l,j^{l}_{1}, j^{l}_{2}, \ldots j^{l}_{n_l})$, with
$n_l \geq 0$. 
The \emph{gluing} of $v_1,v_2,\ldots,v_k$ by $c$ is the cyclic
permutation $\bar{v}$ defined by: 
$$
\bar v = (i_1,j^{1}_{1}, \ldots,j^{1}_{n_1}, 
i_2,j^{2}_{1}, \ldots,j^{2}_{n_2},
\ldots, 
i_k,j^{k}_{1}, \ldots,j^{k}_{n_k} 
)
$$
Let $\bar \beta$ be the permutation obtained from the cycle representation of
$\beta$ by erasing the cycles $v_1,\ldots,v_k$ and replacing them with
$\bar{v}$, and let $\bar\gamma=\bar\beta\alpha$. We say that the map
$\bar{\mathfrak{m}}=(\alpha,\bar\beta,\bar\gamma)$ has been obtained from
$\mathfrak{m}$ by gluing $c$. The new map $\bar{\mathfrak{m}}$ has 
$|\beta|-k+1$ vertices, and the same number of edges.

Observe that the gluing and slicing operations are reciprocal one to another
(see Figure~\ref{fig:slicing-gluing} again).

\section{Schemes and dominant maps.}

\label{sec:scheme}

In this section, we describe a technique already written
in~\cite{ChMaSc} that enables to perform the asymptotic enumeration of
unicellular maps via generating series techniques. It consists in the reduction
of unicellular maps to a finite number of objects, called schemes. We need that
in order to identify the dominating case, and define precisely what a
\emph{dominant map} is.

\subsection{The scheme of a unicellular map}
\label{subsec:scheme}

\begin{definition}
A \emph{scheme} of genus $g$ is a rooted unicellular map of genus $g$ without
vertices of degree $1$ nor $2$. $\mathcal{S}_g$ is the set of all schemes of
genus $g$.
\end{definition}
If a scheme of genus $g$ has $n_i$ vertices of degree $i$ for all $i$, the
fact that $n_1=n_2=0$ and Euler characteristic formula implies:
\begin{eqnarray}
\label{eq:ni}
\sum_{i\geq 3}\frac{i-2}{2}n_i = 2g-1
\end{eqnarray}
In particular, the sequence $(n_i)_{i\geq 1}$ can only take a finite number of
values, which implies the following lemma:
\begin{lemma}[\cite{ChMaSc}]
For every $g\geq 1$, the set $\mathcal{S}_g$ of schemes of genus $g$ is finite.
\end{lemma}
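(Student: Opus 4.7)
The plan is to extract from equation~(\ref{eq:ni}) a uniform bound on the number of edges of any scheme of genus $g$, and then invoke the fact that only finitely many triples $(\alpha,\beta,\gamma)$ exist on a set of bounded size.

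First I would bound the number of vertices. Since every term $\tfrac{i-2}{2}n_i$ on the left-hand side of~(\ref{eq:ni}) is nonnegative, and since for $i\geq 3$ we have $\tfrac{i-2}{2}\geq \tfrac{1}{2}$, the identity $\sum_{i\geq 3}\tfrac{i-2}{2}n_i=2g-1$ yields
\[
\sum_{i\geq 3} n_i \;\leq\; 2(2g-1) \;=\; 4g-2.
\]
In particular the total vertex count $v$ of any scheme of genus $g$ is at most $4g-2$. As a bonus, the same equation forces $n_i=0$ for all $i>4g$, so the maximum degree is also bounded, though only the bound on $v$ is really needed.

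Next I would convert this into a bound on the number of edges. For a unicellular map the Euler relation $|\beta|+|\gamma|=|\alpha|+2-2g$ recalled in Section~\ref{sec:maps} specializes, since $|\gamma|=1$, to $n=2g-1+v$. Combined with the previous bound this gives $n\leq 6g-3$ for every scheme of genus $g$.

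Finally, a rooted map of size $n$ is an equivalence class of triples of permutations of $\llbracket 1,2n\rrbracket$ under the action of $\mathrm{Stab}(1)$; for each fixed $n$ there are only finitely many such triples, hence only finitely many rooted maps. Taking the union over the finitely many integers $n\leq 6g-3$ shows that $\mathcal{S}_g$ is finite. There is no genuine obstacle here; the only thing to be careful about is not losing the inequality direction when translating the weighted sum~(\ref{eq:ni}) into an unweighted bound, and remembering that $|\gamma|=1$ for unicellular maps when applying Euler's formula.
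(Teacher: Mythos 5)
Your proof is correct and follows essentially the same route as the paper, which simply observes that equation~(\ref{eq:ni}) forces the degree sequence $(n_i)$ to take only finitely many values; you make this explicit by deriving the bounds $v\leq 4g-2$ and $n\leq 6g-3$ and then appealing to the finiteness of maps of bounded size. All the steps (the inequality $\tfrac{i-2}{2}\geq\tfrac{1}{2}$, the use of $|\gamma|=1$ in Euler's formula) are carried out correctly.
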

It will be convenient (and possible) to assume that each scheme carries a
fixed labelling and orientation of the edges: i.e., we shall
speak of the $i$-th edge of a scheme, and of its orientation without
more precision. By convention, the $1$-st edge of a scheme will be the root edge.

\begin{figure}[h]
\centerline{\includegraphics[scale=1.0]{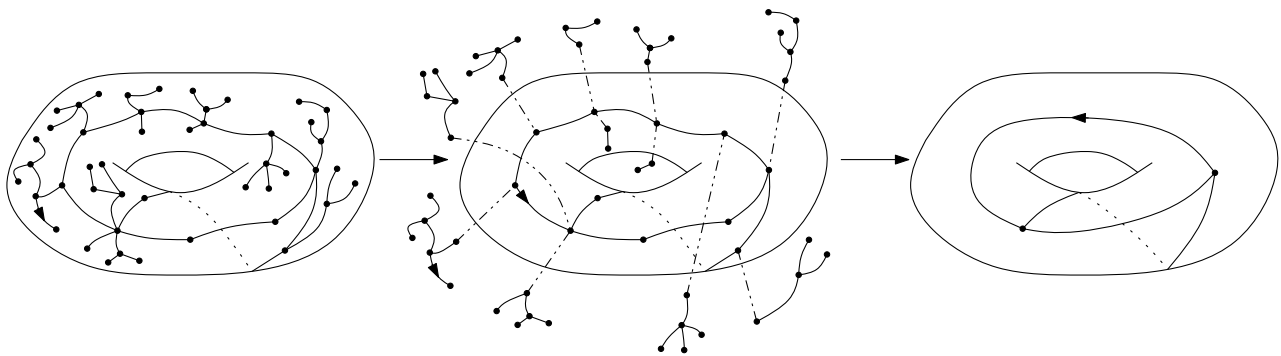}}
\caption{From a unicellular map of genus $1$ to its scheme (observe the
rooting of the core).}
\label{fig:scheme}
\end{figure}

\begin{figure}
\centerline{\includegraphics[scale=1.0]{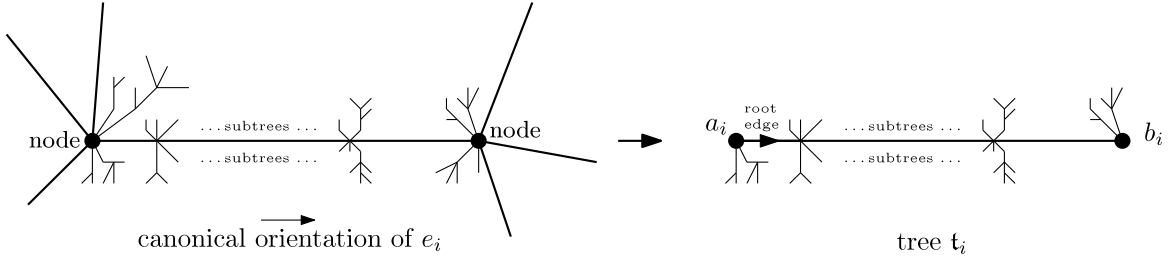}}
\caption{The tree $\mathfrak{t}_i$ associated to the $i$-th edge of the scheme.}
\label{fig:supertree}
\end{figure}

We now explain how to associate a scheme to a unicellular map  (see
Figure~\ref{fig:scheme}). Let $\mathfrak{m}$ be a rooted unicellular map of
genus $g$. First, we erase recursively all the vertices of $\mathfrak{m}$ of degree $1$, until 
there are no more such vertices left. We are left with a  map $\mathfrak{c}$,
witout vertices of degree $1$, which we call the \emph{core} of $\mathfrak{m}$. 
By convention, $\mathfrak{c}$ is rooted as follows: if the root edge of
$\mathfrak{m}$ is still present in $\mathfrak{c}$, we keep it as the root of
$\mathfrak{c}$. Otherwise, the root belongs to some plane tree which is
attached to some vertex $v$ of $\mathfrak{c}$: the root edge of $\mathfrak{c}$
is the first edge of $\mathfrak{c}$ encountered after that plane
tree when turning counterclockwise around $v$ (and it is
oriented leaving $v$). Now, in the core, 
the vertices of degree $2$ are organised into maximal chains connected together
at vertices of degree at least $3$. We replace each of these chains by a new
edge: we obtain a map $\mathfrak{s}$ without vertices of degree $1$ nor $2$. 
The root of $\mathfrak{s}$ is the new edge corresponding to the chain that was
carrying the root of $\mathfrak{c}$ (with the same orientation).
By construction, $\mathfrak{s}$ is a scheme of genus $g$, called \emph{the
scheme of} $\mathfrak{m}$. The vertices of $\mathfrak{m}$ that remain vertices
in its scheme are called the \emph{nodes} of $\mathfrak{m}$.

Now, say that $\mathfrak{s}$ has $k$ edges and $l$ vertices.
Let $v_1,\ldots,v_l$, be the nodes of $\mathfrak{m}$, and assume that each $v_i$
is incident to exaclty $n_i$ edges of the core of $\mathfrak{m}$, say
$h^{(i)}_1,\ldots h^{(i)}_{n_i}$. Let $\mathfrak{t}_*$ be the map obtained by
splitting $\mathfrak{m}$ successively at all the
$(h^{(i)}_1,\ldots h^{(i)}_{n_i})$. It is
easily seen that $\mathfrak{t}_*$ has $k$ connected components, 
each of them being a plane tree. Let $\mathfrak{t}_i$
be the connected component associated to the $i$-th edge (say $e_i$) of
$\mathfrak{s}$. Let $a_i$ and $b_i$ be the vertices of $\mathfrak{t}_i$  
corresponding respectively to the origin and endpoint of $e_i$ in
$\mathfrak{s}$ (recall that $e_i$ is canonically oriented). In $\mathfrak{t}_i$,
there is a unique simple path from $a_i$ to $b_i$: by convention, we let the
first (oriented) edge of this path be the root of $\mathfrak{t_i}$
(see Figure~\ref{fig:supertree}).
\begin{definition}
We let $\mathfrak{T}$ be the set of pairs $(\mathfrak{t},\nu)$, where
$\mathfrak{t}$ is a rooted plane tree, and $\nu$ is a vertex of $\mathfrak{t}$
different from the root vertex, such that the unique simple path in
$\mathfrak{t}$ that goes from the root vertex to $\nu$ contains the root edge.
\end{definition} 
Now, let $(\mathfrak{t},\nu)\in\mathfrak{T}$ and $\epsilon$ be an oriented edge
of $\mathfrak{t}$. If $\epsilon$ is not an edge of the oriented path $p(\nu)$ from
the root vertex to $\nu$ in $\mathfrak{t}$, it belongs to some tree $\tau$ attached at a
vertex $v$ of $p(\nu)$. In this case, we say that $\epsilon$ is \emph{at the
right of $\nu$} if either $v$ is the root vertex, either $v\neq \nu$ and the tree $\tau$ is
attached at the right of $p(\nu)$. In the other case, i.e. if $\epsilon\in
p(\nu)$, we say that it is at the right of $\nu$ if it is oriented as $p(\nu)$.
We let $\widetilde{\mathfrak{T}}$ be the set of triples
$(\mathfrak{t},\nu,\epsilon)$, where $(\mathfrak{t},\nu)\in\mathfrak{T}$ and
$\epsilon$ is at the right of $\nu$.

Observe that, from the construction above, each
$(\mathfrak{t}_i,b_i)$ is an element of $\mathfrak{T}$. 
Moreover, the root
edge of $\mathfrak{m}$ makes $(\mathfrak{t}_1,b_1)$ an element of
$\widetilde{\mathfrak{T}}$,
so that we have associated to the map $\mathfrak{m}$ an element of 
$\widetilde{\mathfrak{T}}\times\mathfrak{T}^{k-1}$.

Conversely, given a scheme $\mathfrak{s}$ with $k$ edges and an element
$\mathfrak{t}_*\in\widetilde{\mathfrak{T}}\times\mathfrak{T}^{k-1}$, one easily
reconstruct a map $m$ of scheme $\mathfrak{s}$ by replacing the $i$-th edge of $\mathfrak{s}$
by the $i$-th tree $\mathfrak{t}_i$ as in Figure~\ref{fig:supertree}.
This construction is clearly reciprocal to the previous one, which gives:
\begin{lemma}
Let 
$ \displaystyle
T(z) = \sum_{\mathfrak{t}\in\mathfrak{T}} z^{|\mathfrak{t}|}, $ 
$ \displaystyle
\widetilde{T}(z) = \sum_{\mathfrak{t}\in\widetilde{\mathfrak{T}}}
z^{|\mathfrak{t}|} $, and let $\displaystyle
U_g(z) = \sum_{\mathfrak{m} \in \mathcal{U}_{g}} z^{|\mathfrak{m}|}
$
be the generating series of unicellular maps of genus $g$ by the number of
edges. Then one has:
\begin{eqnarray}
\label{eq:allschemes}
U_g(z) = \sum_{\mathfrak{s}\in\mathcal{S}_g} \widetilde{T}(z)T(z)^{|\mathfrak{s}|-1}
\end{eqnarray}
\end{lemma}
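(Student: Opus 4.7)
The plan is to prove \eqref{eq:allschemes} by exhibiting a size-preserving bijection
$$
\Phi:\ \mathcal{U}_g\ \longrightarrow\ \bigsqcup_{\mathfrak{s}\in\mathcal{S}_g} \widetilde{\mathfrak{T}}\times\mathfrak{T}^{|\mathfrak{s}|-1},
$$
such that, if $\mathfrak{m}$ is sent to a tuple $((\mathfrak{t}_1,b_1,\epsilon),(\mathfrak{t}_2,b_2),\ldots,(\mathfrak{t}_k,b_k))$ in the $\mathfrak{s}$-component with $k=|\mathfrak{s}|$, the edges of $\mathfrak{m}$ are partitioned among those of the $\mathfrak{t}_i$, so that $|\mathfrak{m}|=\sum_{i=1}^k|\mathfrak{t}_i|$. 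Since $\mathcal{S}_g$ is finite, the right-hand side of the claimed identity is a finite sum whose $\mathfrak{s}$-term has generating function $\widetilde{T}(z)T(z)^{|\mathfrak{s}|-1}$, and such a bijection directly yields \eqref{eq:allschemes}.

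The map $\Phi$ is the one already sketched above the statement: extract the scheme $\mathfrak{s}$ of $\mathfrak{m}$, then slice every node $v_j$ by its set of core-incidences $\{h_1^{(j)},\dots,h_{n_j}^{(j)}\}$, and index the $k$ resulting components by the scheme edges they came from. I would verify the following three points.

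\textbf{(i) Each component of the sliced map is a plane tree.} Slicing preserves the edge count while adding $\sum_j(n_j-1)=2k-l$ vertices, with $l$ the number of nodes. Combining Euler's formula for the unicellular map of genus $g$ with the scheme identity $k-l=2g-1$, which follows from \eqref{eq:ni}, forces each connected component to have genus $0$ and a single face, i.e.\ to be a plane tree.

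\textbf{(ii) The outputs live in $\mathfrak{T}$ and $\widetilde{\mathfrak{T}}$.} The vertices $a_i$ and $b_i$ of $\mathfrak{t}_i$ are distinct even when $e_i$ is a loop of $\mathfrak{s}$ (since slicing separates the two loop-halves), and rooting $\mathfrak{t}_i$ at the first edge of the simple $a_i\to b_i$ path makes $(\mathfrak{t}_i,b_i)\in\mathfrak{T}$ by construction. For $i=1$, the rooting convention used to root the core of $\mathfrak{m}$ places the original root of $\mathfrak{m}$ on a specific half-edge $\epsilon$ of $\mathfrak{t}_1$ which, by direct unfolding of the definitions, lies at the right of $b_1$; hence $(\mathfrak{t}_1,b_1,\epsilon)\in\widetilde{\mathfrak{T}}$.

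\textbf{(iii) Inverse construction.} Given $\mathfrak{s}\in\mathcal{S}_g$ and a tuple in $\widetilde{\mathfrak{T}}\times\mathfrak{T}^{k-1}$, substitute each edge $e_i$ of $\mathfrak{s}$ by its tree $\mathfrak{t}_i$ as in Figure~\ref{fig:supertree}, and use $\epsilon$ to root the result. This is a sequence of gluings at the scheme nodes, which are exactly the inverses of the slicings used to define $\Phi$, so the two maps are mutually inverse and the scheme of the reconstructed map is indeed $\mathfrak{s}$.

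The main subtle point I expect to handle carefully is (ii), namely the compatibility between the convention used to root the core of $\mathfrak{m}$ (``first edge encountered counterclockwise after the root-bearing plane tree'') and the ``at the right of $\nu$'' condition in the definition of $\widetilde{\mathfrak{T}}$; this will require a short case analysis depending on whether the original root edge of $\mathfrak{m}$ survives in the core or lies in a pruned subtree. The remaining items (i) and (iii) are routine once slicing and gluing are recognized as inverse operations.
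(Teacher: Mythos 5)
Your proposal is correct and follows essentially the same route as the paper: the lemma is exactly the statement that the scheme-extraction construction of Section~\ref{subsec:scheme} (slicing the nodes and reading off one tree of $\mathfrak{T}$ per scheme edge, with the root edge upgrading the first tree to $\widetilde{\mathfrak{T}}$) is a size-preserving bijection onto $\bigsqcup_{\mathfrak{s}}\widetilde{\mathfrak{T}}\times\mathfrak{T}^{|\mathfrak{s}|-1}$, inverted by substituting trees back into scheme edges. You merely make explicit some verifications (the Euler-characteristic count for the forest, the rooting compatibility) that the paper leaves to the reader.
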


It is easy to
compute the series $T(z)$ and $\widetilde{T}(z)$. Let
$\mathfrak{T}'$ be the set of rooted trees with a marked vertex. 
It is clear that the operation defined
by the flipping of the root edge is an involution of $\mathfrak{T}'$ that sends $\mathfrak{T}$
to its complementary
(recall
that our trees have at least one edge). 
Hence the number of elements of $\mathfrak{T}$ with $n$
edges is: $$
\left|\mathfrak{T}_n\right| = \frac{1}{2}\left|\mathfrak{T}'_n\right|
=\frac{n+1}{2}\frac{1}{n+1}{2n \choose n},
$$
which gives:
$$T(z)= \frac{1}{2}\left(\frac{1}{\sqrt{1-4z}}-1\right)
$$
Now, let $\mathfrak{T}''$ be the set of elements
$(\mathfrak{t},\nu)\in\mathfrak{T}$ that carry an additional  distinguished
oriented edge. Inverting the roles of the root edge and of the opposite of the last edge of the
path $p(\nu)$ is an involution of $\mathfrak{T''}$ that sends
$\widetilde{\mathfrak{T}}$ to its complementary. Hence:
$$
\widetilde{T}(z) = \frac{1}{2}\sum_n 2n \left|\mathfrak{T}_n\right|z^n =
\frac{zd}{dz} T(z)$$
This last equation and (\ref{eq:allschemes}) imply:
\begin{eqnarray}
\label{eq:doublerooting}
U_g(z) =\frac{zd}{dz}
\sum_{\mathfrak{s}\in\mathcal{S}_g}
\frac{1}{|\mathfrak{s}|}T(z)^{|\mathfrak{s}|}
\end{eqnarray}

\subsection{The double-rooting argument}
\label{subsec:doublerooting}

We now explain how to prove this last equation directly, without
computing $\widetilde{T}$. Recall that $\mathcal{S}_{g,k}$ is the set of rooted
schemes of genus $g$ with $k$ edges. Given a scheme
$\mathfrak{s}\in\mathcal{S}_{g,k}$ and an element of 
$\mathfrak{T}^k$, substituting each tree with the
corresponding edge of $\mathfrak{s}$, and then selecting an oriented root
edge in the obtained map, one builds a rooted map, whose scheme is equal to
$\mathfrak{s}$ as an unrooted map, but may have a different rooting. In other
words, that scheme carries an additionnal distinguished oriented edge (given by
the root of $\mathfrak{s}$). Such a map
can also be constructed by starting first with a rooted map whose scheme has
$k$ edges, and then selecting an oriented edge of its scheme. 
Hence, if
$U_{\mathfrak{s}}(z)$ denotes the series of rooted maps of scheme $\mathfrak{s}$, we have:
$$
\sum_{\mathfrak{s}\in\mathcal{S}_{g,k}}2\cdot\frac{zd}{dz}
T(z)^{k} = 
\sum_{\mathfrak{s}\in\mathcal{S}_{g,k}}2k\cdot U_{\mathfrak{s}}(z)
$$ so that:
$$
\sum_{\mathfrak{s}\in\mathcal{S}_{g,k}}U_{\mathfrak{s}}(z)=\frac{1}{k}
\sum_{\mathfrak{s}\in\mathcal{S}_{g,k}}\frac{zd}{dz}
T(z)^{k} 
$$
which gives another proof
of Equation~\ref{eq:doublerooting} after summing on $k$. We call this trick the
``double-rooting argument''
(this argument already
appears in \cite{ChMaSc}).
Observe that to enumerate rooted unicellular
maps, we enumerate unicellular maps which are ``doubly-rooted'' (i.e. they have at the same time a
root and a distinguished oriented edge of their scheme), but we count them
with a weight inverse of the size of their scheme. Observe also that we could
have been more direct and avoid the summation on $k$ (proving directly that
$U_\mathfrak{s}(z)=\frac{1}{|\mathfrak{s}|}\frac{zd}{dz}
T(z)^{|\mathfrak{s}|} $). We chose purposely this presentation because we will
use later a variant of this argument, adapted to the case of labelled trees, 
where that approach will be necessary.

\subsection{The dominant schemes}

For all $k\geq 1$, the generating series
$\frac{zd}{dz}T(z)^k$ 
satifies the following expansion near its radius of convergence $1/4$:
$$
\frac{zd}{dz}T(z)^{k} \sim
A_k(1-4z)^{-\frac{k}{2}-1} \:\:, \:\:
z \rightarrow
1/4,$$ 
for some positive constant $A_k$.
Moreover, as an algebraic series, it is amenable to singularity analysis, in
the sense of \cite{FlOd}: it follows from the transfer theorems of \cite{FlOd} that the $n$-th coefficient in the series expansion of 
$\frac{zd}{dz}T(z)^{k}$ is equivalent to $B_kn^{k/2} 4^n$ for some constant $B_k>0$.

Now, in the sum~(\ref{eq:doublerooting}), the maximum value of $k$ 
is realized by schemes with the maximal number of
edges. Moreover, Equation~\ref{eq:ni} and Euler characteristic formula imply that
those schemes of genus $g$ that have the maximal number of edges are the ones which 
have only vertices of degree $3$. Such a scheme has $6g-3$ edges and $4g-2$
vertices.
This leads to the following definition:
\begin{definition}
A \emph{dominant scheme} of genus $g$ is a rooted unicellular map of genus $g$
whith $4g-2$ vertices of degree $3$ and $6g-3$ edges.
A \emph{dominant unicellular map} is a unicellular map whose scheme is dominant.
$\mathcal{S}_g^*$ (resp. $\mathcal{U}_{g}^*$) is the set
of all dominant schemes (resp. dominant maps) of
genus $g$.
\end{definition}
From the discussion above, and after regrouping in Equation~\ref{eq:doublerooting} the terms corresponding to schemes of the same size, 
we see that the number of elements in $\mathcal{U}_{g,n}^*$ is equivalent to $B n^{(6g-3)/2} 4^n$, for some $B>0$,
whereas the number of elements in $\mathcal{U}_{g,n}\setminus\mathcal{U}_{g,n}^*$ is at most $B' n^{(6g-4)/2} 4^n$, for some other constant $B'>0$.
This gives:
\begin{lemma}
\label{lemma:dominant}
Fix $g\geq 1$. When $n$ tends to infinity, one has:
$$
\frac{\left|\mathcal{U}^*_{g,n}\right|}{\left|\mathcal{U}_{g,n}\right|}
=1-O\left(\frac{1}{\sqrt{n}}\right)$$
\end{lemma}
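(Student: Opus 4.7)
The plan is to read the lemma directly off Equation~(\ref{eq:doublerooting}) by splitting the sum over schemes into the dominant and non-dominant parts and applying singularity analysis term by term.

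First I would confirm that the maximum of $|\mathfrak{s}|$ over $\mathfrak{s}\in\mathcal{S}_g$ is $6g-3$, attained precisely by the dominant schemes. If a scheme has $n_i$ vertices of degree $i$ for $i\geq 3$, then its number of vertices is $V=\sum_i n_i$ and its number of edges is $k=\frac12\sum_i i\,n_i$, whence $k-V=\frac12\sum_i (i-2)n_i=2g-1$ by Equation~(\ref{eq:ni}). Since every term $(i-2)/2$ with $i\geq 3$ is at least $1/2$, the same identity forces $V\leq 4g-2$, with equality iff $n_i=0$ for $i\geq 4$. This yields $k\leq 6g-3$, with equality iff all vertices have degree $3$; i.e. iff $\mathfrak{s}\in\mathcal{S}_g^*$.

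Next I would handle the asymptotics of each term in the sum. Using $T(z)=\frac12((1-4z)^{-1/2}-1)$, one gets $\frac{zd}{dz}T(z)^k\sim A_k(1-4z)^{-k/2-1}$ as $z\to 1/4$ for some $A_k>0$, and the function is algebraic, hence amenable to the transfer theorems of \cite{FlOd}. This delivers $[z^n]\frac{zd}{dz}T(z)^k\sim B_k\,n^{k/2}4^n$ for some $B_k>0$, exactly as recalled just before the statement of the lemma.

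It remains to combine the two ingredients. Regrouping in Equation~(\ref{eq:doublerooting}) the terms coming from schemes with the same number of edges and extracting the coefficient of $z^n$, the dominant schemes contribute
\[
|\mathcal{U}^*_{g,n}|\sim \frac{B_{6g-3}}{6g-3}\,|\mathcal{S}^*_g|\,n^{(6g-3)/2}4^n,
\]
while all other schemes have $|\mathfrak{s}|\leq 6g-4$ and hence contribute in total at most $C\,n^{(6g-4)/2}4^n$ for some constant $C=C(g)>0$. Dividing these two estimates gives $|\mathcal{U}^*_{g,n}|/|\mathcal{U}_{g,n}|=1-O(n^{-1/2})$, which is the claim.

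There is no genuine obstacle here: once Equation~(\ref{eq:doublerooting}) is available, the proof is essentially bookkeeping. The only mildly substantive point is the degree argument showing that $|\mathfrak{s}|$ is maximized exactly by the trivalent schemes, and this follows immediately from Equation~(\ref{eq:ni}).
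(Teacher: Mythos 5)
Your proof is correct and follows essentially the same route as the paper: both arguments read the result off Equation~(\ref{eq:doublerooting}) by showing via Equation~(\ref{eq:ni}) that the trivalent schemes are exactly those maximizing $|\mathfrak{s}|$ (with $6g-3$ edges), and then apply singularity analysis and transfer theorems to compare the contribution $\Theta(n^{(6g-3)/2}4^n)$ of dominant schemes with the $O(n^{(6g-4)/2}4^n)$ contribution of the rest. Your explicit derivation of the bound $V\leq 4g-2$ with equality iff all degrees equal $3$ just spells out a step the paper leaves implicit.
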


\section{The number of intertwined nodes: the key lemma.}

\label{sec:key}

\subsection{Notion of intertwined node}
Let $\mathfrak{m}=(\alpha,\beta,\gamma)$ be a dominant unicellular map of genus
$g\geq 1$. We assume that $\mathfrak{m}$ is given in its canonical labeling,
i.e. that $\gamma=(1,2,\ldots,2n)$.
Let $v$ be a node of $\mathfrak{m}$. $v$ is incident to exactly three half-edges that
belong to the core of $\mathfrak{m}$. Let $e_1,e_2,e_3$ be the labels 
of these half-edges {\bf read in clockwise order} around $v$.
Since $e_1,e_2,e_3$ are defined up to a cyclic permutation, we can assume that
$e_1$ is the minimum: $e_1<e_2$ and $e_1<e_3$.
The following definition is short but fondamental:
\begin{definition}
We say that $v$ is an \emph{intertwined node} iff $e_1<e_3<e_2$.
\end{definition}

The definition is motivated by the following lemma:
\begin{lemma}
\label{lemma:intertwined}
If $v$ is an intertwined node, then the map $\bar{\mathfrak{m}}$ obtained after
slicing $v$ by $\{e_1,e_2,e_3\}$ is connected, and it is a dominant unicellular
map of genus $g-1$. We note: $\bar{\mathfrak{m}}=\mathfrak{m}\setminus v$.
\end{lemma}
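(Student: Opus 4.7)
The plan is to work with the canonical representative of $\mathfrak{m}$, so that $\gamma = (1, 2, \ldots, 2n)$, and reduce the statement to an explicit calculation on permutations. Writing the cycle of $v$ as $(i_1, \ldots, i_k)$ with $i_1 = e_1$ (after a cyclic rotation) and placing $e_2 = i_{l_2}$, $e_3 = i_{l_3}$ with $1 < l_2 < l_3 \leq k$, the slicing at $C = \{e_1, e_2, e_3\}$ breaks $v$ into the three cycles $(i_1, \ldots, i_{l_2-1})$, $(i_{l_2}, \ldots, i_{l_3-1})$, $(i_{l_3}, \ldots, i_k)$. Comparing $\bar\beta$ to $\beta$ at the three modified positions $i_{l_2-1}, i_{l_3-1}, i_k$ yields $\bar\beta = \rho \circ \beta$ with $\rho = (e_1\, e_3\, e_2)$ (fixing all other elements), and hence $\bar\gamma = \rho \circ \gamma$.

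Next I compute the cycle structure of $\bar\gamma$, which agrees with $\gamma$ except that it sends $e_1 - 1 \to e_3$, $e_2 - 1 \to e_1$, $e_3 - 1 \to e_2$. Chasing the orbit from $e_1$ under the intertwined assumption $e_1 < e_3 < e_2$, the three jumps reglue the arcs $e_1, \ldots, e_3-1$, then $e_2, \ldots, e_1-1$ (wrapping through $2n$), then $e_3, \ldots, e_2-1$ into a single $2n$-cycle of $\bar\gamma$. This gives both unicellularity of $\bar{\mathfrak{m}}$ and, via the transitivity of $\bar\gamma$ on $\llbracket 1, 2n \rrbracket$, its connectedness. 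Euler's formula with $|\bar\beta| = |\beta| + 2 = n + 3 - 2g$ and $|\bar\gamma| = 1$ then pins the genus down to $g' = g - 1$.

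For dominance I must show that every scheme-vertex of $\bar{\mathfrak{m}}$ has degree $3$. The three new vertices $v_1, v_2, v_3$ each carry only a single core half-edge of $\mathfrak{m}$ (plus possibly tree half-edges), so each has degree $1$ in the core of $\bar{\mathfrak{m}}$ and is stripped away. Every other vertex of $\bar{\mathfrak{m}}$ has exactly the same incident half-edges and cyclic order as in $\mathfrak{m}$, so non-nodes of $\mathfrak{m}$ (tree-vertices or degree-$2$ chain-vertices) remain non-nodes in $\bar{\mathfrak{m}}$. A scheme-vertex of $\bar{\mathfrak{m}}$ is therefore a node of $\mathfrak{m}$ different from $v$, hence of degree $3$ in the scheme of $\mathfrak{m}$. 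Since stripping degree-$1$ vertices and contracting degree-$2$ chains can only remove edges incident to such a vertex, its degree in the scheme of $\bar{\mathfrak{m}}$ is at most $3$, and at least $3$ as a scheme-vertex, hence exactly $3$.

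The main subtlety I expect is the bookkeeping for $\bar\gamma$: the intertwined inequality $e_1 < e_3 < e_2$ is precisely what stitches the three arcs into one cycle, whereas any other ordering would leave three separate cycles and destroy unicellularity. The dominance step is then essentially automatic, since slicing followed by the scheme operations can only decrease degrees at the surviving nodes of $\mathfrak{m}$; no case analysis on loops, multi-edges, or coincidences among scheme-neighbors of $v$ is needed.
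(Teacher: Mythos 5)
Your proposal is correct and follows essentially the same route as the paper's proof: working in the canonical representative, tracking how the three modified images of $\beta$ rewire the $2n$-cycle of $\gamma$ (your factorization $\bar\gamma=\rho\circ\gamma$ with $\rho=(e_1\,e_3\,e_2)$ is just a compact repackaging of the paper's arrow-diagram computation), using the ordering $e_1<e_3<e_2$ to see that the three arcs reassemble into a single $2n$-cycle, applying Euler's formula for the genus, and deducing dominance from the fact that the core of $\bar{\mathfrak{m}}$ is edge-wise contained in that of $\mathfrak{m}$. No gaps.
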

\begin{proof}
Let us write $v=(e_1,i^1_1,\ldots,i^1_{n_1},
e_2,i^2_1,\ldots,i^2_{n_2},e_3,i^3_1,\ldots,i^3_{n_3})$.
Since $\gamma=(1,2,\ldots,2n)$, when turning counterclockwise around the unique
face of $\mathfrak{m}$, starting at $e_1$, one sees a certain number of half-edges,
and then $e_3$ before $e_2$. Then, the last edge seen before $e_3$ is
$\gamma^{-1}(e_3)$ = $\alpha(i_{n_2}^2)$. Going on along the face, one sees then
$\alpha(i_{n_1}^1)$, $e_2$, $\alpha(i_{n_3}^3)$, and $e_1$ again, in that order.
In other words, we can write the graph of $\gamma$ as follows:
$$
\gamma: \; 
e_1 \rightarrow \stackrel{1}{\ldots} \rightarrow \alpha(i^2_{n_2}) 
    \rightarrow
e_3 \rightarrow \stackrel{3}{\ldots} \rightarrow \alpha(i^1_{n_1}) 
	\rightarrow
e_2 \rightarrow \stackrel{2}{\ldots} \rightarrow \alpha(i^3_{n_3}) 
	\rightarrow 
e_1$$
where $i\rightarrow j $ means that $\gamma(i)=j$, and where the notation
$\stackrel{i}{\ldots}$ denotes a sequence of the form $j^i_1\rightarrow j^i_2
\rightarrow \ldots \rightarrow j^i_{m_i}$.

Now, let $\bar\beta$ be the permutation obtained after slicing $v$ by
$\{e_1,e_2,e_3\}$, and let $\bar \gamma=\bar\beta\alpha$. By definition,
$\bar \beta$ is obtained from $\beta$ by replacing $v$ by:
 $$\bar v = (e_1,i^1_1,\ldots,i^1_{n_1})
(e_2,i^2_1,\ldots,i^2_{n_2})(e_3,i^3_1,\ldots,i^3_{n_3})
$$
Hence the only arrows to modify in the graph of $\gamma$ to obtain the
graph of $\bar \gamma$ are the ones leaving
$\alpha(i^1_{n_1})$, $\alpha(i^2_{n_2})$, $\alpha(i^3_{n_3})$.
Now, one has $\bar\gamma(\alpha(i^1_{n_1}))=\bar\beta(i^1_{n_1})=e_1$. In the
same way one has: $\bar\gamma(\alpha(i^2_{n_2}))=e_2$ and 
$\bar\gamma(\alpha(i^3_{n_3}))=e_3$. Thus the graph of $\bar\gamma$ is:
$$\bar\gamma: 
e_1 \rightarrow \stackrel{1}{\ldots} \rightarrow \alpha(i^2_{n_2}) 
    \rightarrow
e_2 \rightarrow \stackrel{2}{\ldots} \rightarrow \alpha(i^3_{n_3}) 
	\rightarrow
e_3 \rightarrow \stackrel{3}{\ldots} \rightarrow \alpha(i^1_{n_1}) 
	\rightarrow 
e_1$$
for the same dotted sequences.
Hence, the
cycle of $\bar \gamma$ containing $e_1$ has length $2n$, so that it is the only cycle of $\bar\gamma$. This proves at the
same time that 
$\bar{\mathfrak{m}}$ is connected, and that it is unicellular. Moreover,
$\bar{\mathfrak{m}}$ has the same number of edges as $\mathfrak{m}$, 
two more vertices, and both have one face, so that Euler characteristic
formula implies that it has genus $g-1$.

Finally, let us construct the core of $\bar{\mathfrak{m}}$, via the algorithm
of Section 2.1. It is clear that all the edges that are
erased during the construction of the core of $\mathfrak{m}$ are erased during
the construction of the core of $\bar{\mathfrak{m}}$, so that all the edges of
the core of $\bar{\mathfrak{m}}$ are edges of the core of $\mathfrak{m}$.
Consequently, the vertices of the scheme of $\bar{\mathfrak{m}}$ cannot have
degree more than $3$, i.e. $\bar{\mathfrak{m}}$ is dominant.
\end{proof}

\subsection{The key lemma}
\begin{lemma}
\label{lemma:key}
Let $\mathfrak{m}\in \mathcal{U}^*_g$ be a dominant map of genus $g$. Then
$\mathfrak{m}$ has exactly $2g$ intertwined nodes.
\end{lemma}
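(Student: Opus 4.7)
The plan is to first reduce to the case that $\mathfrak{m}$ is itself a dominant scheme, and then extract the count from a short double-count of ascents and descents of $\beta$.

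I would first observe that the intertwinedness of a node depends only on the cyclic order of its three core half-edges, both read clockwise around the vertex and encountered along the face walk. Both orders are preserved when passing from $\mathfrak{m}$ to its scheme $\mathfrak{s}$: erasing tree branches and contracting degree-$2$ paths into single edges do not alter the clockwise order of the surviving half-edges at a node, and the face walk of $\mathfrak{s}$ visits the core half-edges in the same cyclic order as the face walk of $\mathfrak{m}$. Hence the number of intertwined nodes of $\mathfrak{m}$ equals the number of intertwined nodes of $\mathfrak{s}$, and we may assume that $\mathfrak{m}$ is a dominant scheme, with canonical labeling $\gamma=(1,2,\ldots,2n)$, $n=6g-3$, $V=4g-2$ trivalent nodes, and $\beta(i)=\alpha(i)+1\pmod{2n}$.

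Each node is then a $3$-cycle $\{e_1<e_2<e_3\}$ of $\beta$; call it \emph{natural} if $\beta=(e_1,e_2,e_3)$ and \emph{reverse} (the intertwined case) if $\beta=(e_1,e_3,e_2)$. Writing $X_n,X_r$ for the respective counts, $X_n+X_r=4g-2$, so the claim is equivalent to $X_r-X_n=2$. Put $U=\{i:\beta(i)>i\}$ and $D=\{i:\beta(i)<i\}$; there are no fixed points of $\beta$, since such a fixed point would give a degree-$1$ vertex, absent in a scheme. A natural cycle contributes $\{e_1,e_2\}$ to $U$ and $\{e_3\}$ to $D$; a reverse cycle contributes $\{e_1\}$ to $U$ and $\{e_2,e_3\}$ to $D$. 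Summing over all nodes yields $|D|-|U|=X_r-X_n$. Summing instead over the $n$ edges, for $\{a,b\}$ with $a<b$ we have $\beta(a)=b+1\pmod{2n}$ and $\beta(b)=a+1$: when $b<2n$, then $a\in U$ and $b\in D$ (net $0$), while for the unique edge with $b=2n$ one has $\beta(a)=1<a$ and $\beta(2n)=a+1<2n$, putting both $a,b$ in $D$ (net $+2$). Hence $|D|-|U|=2$.

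The main subtlety is in the edge-count near the wrap-around $b=2n$: one must exclude degenerate edges of the form $\{k,k+1\}$ and $\{1,2n\}$, each of which would produce a fixed point of $\beta$ and hence a degree-$1$ vertex, forbidden in a scheme. Once that is verified, equating the two expressions for $|D|-|U|$ gives $X_r-X_n=2$, and combined with $X_r+X_n=4g-2$ this yields $X_r=2g$, as claimed.
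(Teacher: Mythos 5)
Your proof is correct, and the counting at its heart is essentially the paper's own argument: the paper works with the restriction $\tilde\beta$ of $\beta$ to the $3(4g-2)$ core half-edges incident to nodes, classifies them as increasing or decreasing, shows by pairing along the edges of the scheme that decreasing half-edges outnumber increasing ones by exactly $2$ (the wrap-around at the largest label being the source of the discrepancy), and concludes from the fact that intertwined nodes carry one increasing half-edge and the others two. Your sets $U$ and $D$, your edge-by-edge cancellation, and your special treatment of the edge containing $2n$ are precisely this, transported to the scheme where $\beta(i)=\alpha(i)+1\bmod 2n$. The one genuine difference is your preliminary reduction to the scheme itself: this is valid, but it rests on the claim that intertwinedness depends only on the two \emph{cyclic} orders of the three core half-edges (clockwise around the node, and along the face), whereas the definition is phrased via the \emph{linear} order of labels through the choice of the minimum $e_1$; one must check that the property is invariant under cyclic shifts of the labelling (equivalently, that a node is intertwined iff the face walk meets its three core half-edges in the cyclic order opposite to the clockwise one), which is a short three-case verification you assert but do not carry out. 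The paper sidesteps this entirely by never relabelling, at the cost of the slightly heavier notation $\tilde\beta$, $\tilde\alpha$; your version is marginally cleaner to state once that invariance is recorded.
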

\begin{proof}
\begin{figure}[h]
\centerline{\includegraphics[scale=1]{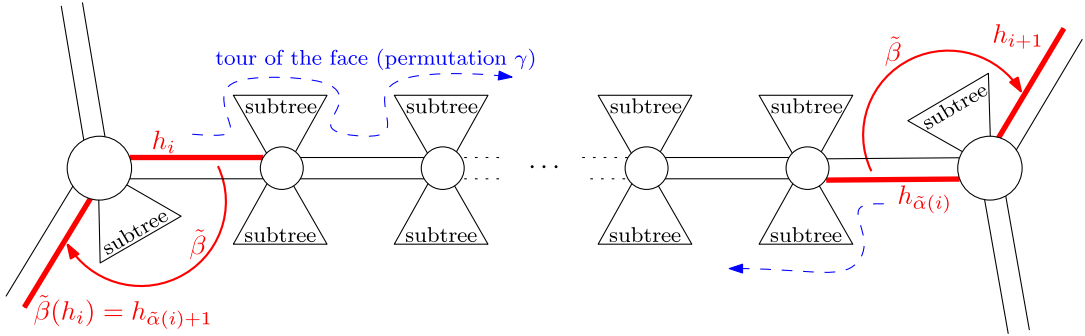}}
\caption{The permutation $\tilde \alpha$.}
\label{fig:doublecounting}
\end{figure}
We assume that $\mathfrak{m}=(\alpha,\beta,\gamma)$ is given in canonical
form: $\gamma=(1,2,\ldots,2n)$. 
Let $k=4g-2$, and let $h_1 < \ldots < h_{3k}$ be the
(labels of) half-edges of the core of $\mathfrak{m}$ which are incident to a node. $\tilde\beta$ is the
restriction of the permutation $\beta$ to the $h_i$'s, i.e.:
$
\tilde\beta(h_i)=\beta^{k_i}(h_i) \mbox{ where } k_i = \mathrm{min} \{k\geq 1:
\beta^k(h_i)\in \{h_1,\ldots h_{3k}\}\} $
We say that a half-edge $h_i$ is \emph{increasing} if $h_i<
\tilde\beta(h_i)$, and \emph{decreasing} otherwise.
We let $n_+$ (resp. $n_-$) be the number of increasing (resp. decreasing)
half-edges. Observe that $n_++n_-=3k$.

Let us fix an half-edge $h_i$. There is an unique
$\tilde \alpha(i)\in\llbracket 1,3k \rrbracket$ such that
$\tilde\beta(h_i)=h_{\tilde\alpha(i)+1}$ (with the convention $3k+1=1$). Now,
it is clear from the fact that $\gamma=(1,2,\ldots,2n)$ and Figure~\ref{fig:doublecounting} that 
$\tilde\beta (h_{\tilde\alpha(i)}) = h_{i+1}$ (with the same convention).
In other words, the application $\tilde\alpha$ is an involution  without fixed points of
$\llbracket 1,3k \rrbracket$. Moreover, observe that if $3k\not\in
\{i,\tilde\alpha(i)\}$ then one has either
$i<i+1<\tilde\alpha(i)<\tilde\alpha(i)+1$, either
$\tilde\alpha(i)<\tilde\alpha(i)+1<i<i+1$, so that
$h_i$ is increasing if and only if
$h_{\tilde\alpha(i)}$ is decreasing. Else, say if $i=3k$, then
$\tilde\beta(h_{\tilde\alpha(i)})=h_1$, so that both $h_{3k}$ and 
$h_{\tilde\alpha(3k)}$ are decreasing, since $h_1$ and $h_{3k}$ are
respectively the smallest and largest of the $h_i$'s. Consequently there are two
more decreasing than increasing half-edges: $n_-=n_++2$, which gives $n_+=\frac{3}{2}k-1=6g-4$.

Finally, by definition, an intertwined node has exactly one increasing half-edge, 
whereas a non-intertwined node has exactly two of them.
Hence, if $\iota$ is the number of intertwined nodes, one has:
$$
n_+ = \iota + 2 (4g-2-\iota)
$$
which gives: $\iota=8g-4-n_+=2g$
\end{proof}

\section{Opening sequences and the bijection.}

\label{sec:opening}

\subsection{Opening sequences}

\begin{definition}
Let $\mathfrak{m}\in \mathcal{U}^*_g$ be a dominant unicellular map of genus
$g$. An \emph{opening sequence} of $\mathfrak{m}$ is a $g$-uple
$v_*=(v_1,\ldots v_g)$ such that:
\begin{itemize}
  \item $v_g$ is an intertwined node of $\mathfrak{m}$ 
  \item for all $i\in\llbracket 1,g-1\rrbracket$, $v_{i}$ is an intertwined node
  of $\mathfrak{m}\setminus v_g\setminus\ldots\setminus v_{i+1}.$
\end{itemize}
An \emph{opened map} is a dominant unicellular map together with an opening
sequence. The set of all opened maps of genus $g$ is denoted $\mathcal{O}_g^*$.
\end{definition}

From Lemma~\ref{lemma:key}, each dominant unicellular map
$\mathfrak{m}$ of genus $g$ has exactly $2g$ intertwined nodes. Now, once such
a node $v_g$ has been chosen, the map $\mathfrak{m}\setminus v_g$ is, from 
Lemma~\ref{lemma:intertwined}, a dominant unicellular map of genus $g-1$. So,
using Lemma~\ref{lemma:key} again, it has itself $2(g-1)$ intertwined nodes.
Repeating the argument $g$ times, one obtains the following proposition:
\begin{proposition}
\label{prop:2^gg!}
Let $\mathfrak{m}\in\mathcal{U}_g^*$. Then $\mathfrak{m}$ has exactly
$\displaystyle
\prod_{i=1}^{g} (2i) = 2^g g!
$
opening sequences.
The numbers of opened maps and dominant unicellular maps are related by:
$\displaystyle
\left|\mathcal{O}^*_{g,n}\right|=2^g g!
\left|\mathcal{U}^*_{g,n}\right| $
\end{proposition}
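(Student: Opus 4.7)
The plan is to induct on $g$, using the two preceding lemmas as the engine. The base case $g=0$ is trivial (the empty tuple is the only opening sequence, and $2^0 0! = 1$), so I will focus on the inductive step.

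For the inductive step, fix $\mathfrak{m}\in\mathcal{U}_g^*$ and count opening sequences $(v_1,\ldots,v_g)$ by first choosing $v_g$. By Lemma~\ref{lemma:key}, $\mathfrak{m}$ has exactly $2g$ intertwined nodes, so there are $2g$ choices for $v_g$. By Lemma~\ref{lemma:intertwined}, for each such choice the map $\mathfrak{m}\setminus v_g$ is a dominant unicellular map of genus $g-1$. By the very definition of an opening sequence, the remaining tuple $(v_1,\ldots,v_{g-1})$ must then be an opening sequence of $\mathfrak{m}\setminus v_g$. By the induction hypothesis, there are exactly $2^{g-1}(g-1)!$ such tuples. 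Multiplying gives $2g\cdot 2^{g-1}(g-1)! = 2^g g!$, as desired. I should note that this argument presupposes that the choice of $v_g$ is unconstrained by the later requirement on $(v_1,\ldots,v_{g-1})$; that is built into the definition (the intertwined condition on $v_i$ is imposed only on the already-sliced map $\mathfrak{m}\setminus v_g\setminus\cdots\setminus v_{i+1}$), so there is no hidden obstruction.

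The cardinality identity is then just a double count of the set $\mathcal{O}_{g,n}^*$: summing the number of opening sequences over all $\mathfrak{m}\in\mathcal{U}_{g,n}^*$ yields $|\mathcal{O}^*_{g,n}| = \sum_{\mathfrak{m}\in\mathcal{U}_{g,n}^*} 2^g g! = 2^g g!\,|\mathcal{U}^*_{g,n}|$, since by the first part every map contributes the same number $2^g g!$.

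The only subtle point worth checking carefully is that Lemma~\ref{lemma:intertwined} applies at every step of the iterated slicing. This is automatic: the lemma guarantees that $\mathfrak{m}\setminus v_g$ is again a \emph{dominant} unicellular map (of genus $g-1$), so it is a legitimate element of $\mathcal{U}_{g-1}^*$ and Lemmas~\ref{lemma:key} and~\ref{lemma:intertwined} may be reapplied; the induction therefore propagates without friction. I do not anticipate a real obstacle—the work is entirely in the two previous lemmas, and the present proposition is really just the bookkeeping that packages them into a counting statement.
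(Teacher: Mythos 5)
Your proof is correct and is essentially the paper's own argument: the paper also peels off $v_g$ using Lemma~\ref{lemma:key} ($2g$ choices) and Lemma~\ref{lemma:intertwined} (the sliced map is again dominant, of genus $g-1$), merely phrasing the iteration as ``repeating the argument $g$ times'' rather than as a formal induction. The bookkeeping for $\left|\mathcal{O}^*_{g,n}\right|$ is identical.
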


Note: in what follows, to shorten notation, we note $\mathfrak{m}\setminus
v_g\ldots v_i$ for $\mathfrak{m}\setminus v_g\setminus\ldots\setminus v_{i}$.

\subsection{Trees with $g$ triples}
\label{subsection:skeleton}

Let $\mathfrak{t}$ be a tree of vertex set $V$ and edge set $E$. Let
$W\subset V$ be a subset of the vertices of $\mathfrak{t}$.
For $v,v'\in V$, let $p(v,v')$ be the subset of $E$ made of all the edges of
the unique simple path going from $v$ to $v'$ in $\mathfrak{t}$.
We set:
$$\mathfrak{r}(W) = \bigcup_{(v,v')\in W^2} p(v,v').$$
If the root edge of $\mathfrak{t}$ is still present in $\mathfrak{r}(W)$, we
keep it as the root of $\mathfrak{r}(W)$. Otherwise, it belongs to some subtree 
which is attached to some vertex $v$ of $\mathfrak{r}(W)$: we select the first
edge of $\mathfrak{r}(W)$ encountered counterclockwise around $v$ after that
tree as the root edge of $\mathfrak{r}(W)$, and orient it leaving $v$.

In $\mathfrak{r}(W)$, the vertices of degree $2$ which do not belong to $W$
are organised into maximal chains, whose extremities are either
elements of $W$, either vertices of degree $\geq 3$. We now replace
each of these maximal chains by a new edge: we obtain a tree
$\mathfrak{s}(W)$, which inherits naturally a root from the root of
$\mathfrak{r}(W)$. We say that $\mathfrak{s}(W)$ is the
\emph{skeleton} of $W$ in $\mathfrak{t}$.
\begin{figure}[h]
\centerline{\includegraphics[scale=1.0]{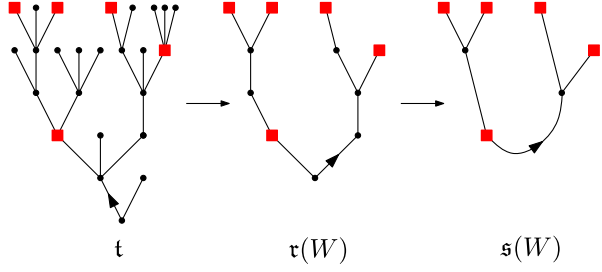}}
\caption{The skeleton of a rooted plane tree with 4 marked vertices (elements
of $W$ are squares).}
\label{fig:skeleton}
\end{figure}

We let $\mathcal{X}^p_k$ be the set of all possible skeletons with $k$ edges of
a set of $p$ elements: $$
\mathcal{X}^p_{k} = \{\mathfrak{s} \mbox{ rooted tree with } k \mbox{ edges}, \:
\exists
\mathfrak{t}
\mbox{ and }W,\: |W|=p \mbox{ and } \mathfrak{s}\mbox{ is the skeleton of }
W \mbox{ in }\mathfrak{t}\}
$$
Observe that, since all the vertices of $\mathfrak{s}(W)$ of degree $1$ and $2$
are elements of $W$, if $n_i$ denotes the number of vertices of degree $i$ in
$\mathfrak{s}(W)$, we have from Euler characteristic formula:
$\displaystyle
\sum_{i\geq 3}\frac{i-2}{2}n_i = \frac{n_1}{2}-1 \leq \frac{|W|}{2}-1
$
and $n_2\leq |W|$, so that for a fixed value of $|W|$, the number of possible
skeletons is finite.

\begin{definition}
Let $\mathfrak{t}$ be a tree and $W$ be a subset of the vertices of
$\mathfrak{t}$. We say that $W$ is \emph{non-singular} if its skeleton $\mathfrak{s}(W)$ has
only vertices of degree $1$ and $3$, and if all the elements of $W$ have
degree $1$ in $\mathfrak{s}(W)$.
\end{definition}

\begin{definition}[Trees with $g$ triples]
A \emph{tree with $g$ triples} is a pair $(\mathfrak{t},c_*)$,
where
\begin{itemize}
  \item $\mathfrak{t}$ is a rooted plane tree,
  \item $c_*=(c_1,\ldots, c_g)$ where each $c_i$ is a subset of the
  vertices of $\mathfrak{t}$ with three elements:
  $c_i=\{v^{(i)}_1,v^{(i)}_2,v^{(i)}_3\}$.
  \item the $c_i$ are disjoint: $i\neq j \Rightarrow c_i \cap c_j = \emptyset$
  \item the set $\displaystyle\bigcup_i c_i$ is non-singular. 
\end{itemize}
The set of all trees with $g$ triples is denoted
$\mathcal{T}_{g}$.
\end{definition}

Let $(\mathfrak{t},c_*)$ be a tree with $g$ triples. By abuse of notation, we
also note $c_*$ for $\cup_i c_i$. Fix $v\in c_*$. For all
$x\in c_*\setminus\{v\}$, there is a unique simple path going from $v$ to $x$:
let $e_{v,x}$ be the first edge of this path. If there existed $x$ and $y$ such that
$e_{v,x}\neq e_{v,y}$, then the path linking $x$ and $y$ would visit $v$,
and $v$ would be an inner node of the skeleton, which contradicts the fact that
$c_*$ is non-singular. Hence the edge $e_{v,x}$ depends only on $v$. This edge is made 
of two half-edges, one of them belonging to $v$ (in the sense of
the encoding of maps by permutations). We note this half-edge $h_v$ and say
it is the \emph{incoming half-edge} at $v$.

\begin{proposition}
\label{prop:opening}
Let $(\mathfrak{m},v_*)\in \mathcal{O}^*_{g,n}$ be an opened map of genus $g$
with $n$ edges. 
For $i\in\llbracket 1,g\rrbracket$, the vertex $v_i$
of $\mathfrak{m}\setminus v_g \ldots v_{i+1}$ gives birth to
three vertices, say $v^{i}_1,v^{i}_2,v^{i}_3$, of $\mathfrak{m}\setminus
v_g \ldots v_{i}$.

Let $\mathfrak{t}=\mathfrak{m}\setminus v_g \ldots v_{1}$.
For all $i$, let $c_i=\{v^{i}_1,v^{i}_2,v^{i}_3\}$, and let
$c_*=(c_1,\ldots,c_g)$. Then $(\mathfrak{t},c_*)$ is a tree with $g$ triples.
Moreover, the half-edges of the core of $\mathfrak{m}\setminus v_g\ldots
v_{i+1}$ incident to $v_i$ are the incoming half-edges of
$v^{i}_1,v^{i}_2,v^{i}_3$ in $(\mathfrak{t},c_*)$.
\end{proposition}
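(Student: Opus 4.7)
The plan is to argue by induction on $g$. The base case $g=0$ is vacuous. For the inductive step, set $\mathfrak{m}':=\mathfrak{m}\setminus v_g$; by Lemma~\ref{lemma:intertwined}, $\mathfrak{m}'$ is a dominant unicellular map of genus $g-1$ and $(v_1,\ldots,v_{g-1})$ is an opening sequence for it, so the inductive hypothesis provides a tree with $g-1$ triples $(\mathfrak{t},(c_1,\ldots,c_{g-1}))$ together with the incoming-half-edge identification for $i<g$; it only remains to incorporate $c_g$.

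Disjointness of $c_g$ from the other $c_i$ is immediate by construction: the three vertices $v_1^g,v_2^g,v_3^g$ are created by the very first slicing of the sequence (applied to $\mathfrak{m}$), while the elements of $c_1,\ldots,c_{g-1}$ are created by subsequent slicings inside $\mathfrak{m}'$, so they are pairwise distinct vertices of $\mathfrak{t}$.

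The heart of the proof, and the main obstacle, is the non-singularity of $c_*:=\bigcup_{i=1}^g c_i$. For this I would introduce the subgraph $C$ of $\mathfrak{t}$ whose vertices are the core vertices of $\mathfrak{m}$ not belonging to $\{v_1,\ldots,v_g\}$ together with the elements of $c_*$, and whose edges are the core edges of $\mathfrak{m}$: each slicing only redistributes half-edges among new vertices without destroying edges, so these core edges still make sense in $\mathfrak{t}$. The degree analysis inside $C$ is then clean: the $3g-2$ unsliced scheme-nodes have degree $3$, the core-chain vertices have degree $2$, and each element of $c_*$ has degree exactly $1$, inheriting a single core half-edge from its slicing. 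Since $C$ is a subgraph of the tree $\mathfrak{t}$ it is a forest, and a short Euler-type count (using $E_{\mathrm{core}}-V_{\mathrm{core}}=2g-1$ and the fact that the $g$ slicings increase the vertex count by $2g$) shows that $C$ has exactly one component, hence is a subtree of $\mathfrak{t}$ whose leaf set is precisely $c_*$. Using that a tree with at least one edge has at least two leaves, every edge of $C$ lies on a path between two leaves of $C$ (otherwise removing it would leave a component consisting of a single vertex which is internal in $C$, a contradiction), so $C=\mathfrak{r}(c_*)$. Contracting the maximal chains of degree-$2$ vertices not in $c_*$ then yields $\mathfrak{s}(c_*)$, which is trivalent with leaf set $c_*$; this is precisely the non-singularity of $c_*$.

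Finally, for the incoming-half-edge claim, the proof of Lemma~\ref{lemma:intertwined} shows that the three core half-edges $e_1,e_2,e_3$ of $v_i$ in $\mathfrak{m}\setminus v_g\ldots v_{i+1}$ are distributed one to each of $v_1^i,v_2^i,v_3^i$ during the slicing of $v_i$, and the later slicings (acting at other vertices) leave the half-edges at any $v_j^i$ untouched. Hence in $\mathfrak{t}$ the edge carrying $e_j$ is an edge of $C=\mathfrak{r}(c_*)$ incident to $v_j^i$, and since $v_j^i$ has degree $1$ in $C$ it is the unique such edge. The path in $\mathfrak{t}$ from $v_j^i$ to any other element of $c_*$ therefore begins along $e_j$, so $e_j=h_{v_j^i}$, completing the proof.
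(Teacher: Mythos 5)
Your proof is correct, but the central step --- non-singularity of $c_*$ --- is handled by a genuinely different argument from the paper's. The paper argues locally: for each new vertex $v^i_j$, temporarily disconnecting the unique core edge at $v^i_j$ leaves a pendant tree containing no other marked vertex (otherwise $v_i$ would carry a fourth core edge, contradicting dominance), so every path from $v^i_j$ to another marked vertex starts with that edge; degree $1$ in the skeleton and the incoming-half-edge identification both drop out of this single observation, and the remaining skeleton vertices are the unopened nodes, of degree $3$. You instead argue globally: you track the image $C$ of the core of $\mathfrak{m}$ inside $\mathfrak{t}$, compute its degree sequence, and combine the Euler count $E-V=2g-1$ for the core with the $2g$ extra vertices created by the slicings to conclude that $C$ is a subtree of $\mathfrak{t}$ with leaf set exactly $c_*$, hence $C=\mathfrak{r}(c_*)$. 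Your route is a bit longer but has the merit of identifying $\mathfrak{r}(c_*)$, and hence the skeleton, explicitly with the image of the core (resp.\ the scheme) of $\mathfrak{m}$ --- a point the paper only asserts in passing; the paper's route is shorter and yields the incoming-half-edge statement with no additional work. Two things you should make explicit: (i) each $v_i$ is an \emph{original} node of $\mathfrak{m}$ and never one of the previously created vertices $v^{i'}_{j'}$ (those carry only one core half-edge, so they cannot be nodes of any subsequent map) --- your degree count for $C$ and your disjointness claim both silently rely on this; and (ii) the induction you set up does essentially no work, since non-singularity of $c_1\cup\dots\cup c_{g-1}$ implies nothing about non-singularity of $c_1\cup\dots\cup c_g$; your direct argument via $C$ in fact proves the whole statement in one pass, so you could drop the inductive scaffolding entirely.
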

\begin{definition}
We note $\Phi(\mathfrak{m},v_*)=(\mathfrak{t},c_*)$. 
The map $\Phi$ is therefore an application: 
$$\Phi: \mathcal{O}^*_{g,n} \longrightarrow \mathcal{T}_{g,n}$$
\end{definition}
\begin{proof}[Proof of the proposition]
It is clear by using Lemma~\ref{lemma:intertwined} recursively that
$\mathfrak{t}$ is a rooted unicellular planar map, i.e. a rooted plane tree.

Fix $i\in\llbracket 1,g \rrbracket$.
In the map $\mathfrak{m}\setminus v_g\ldots v_{i+1}$,
three half-edges of the core, say $h_i^1,h_i^2,h_i^3$, meet at the vertex
$v_i$. Each one gives birth to a vertex $v_i^j$ of $\mathfrak{m}\setminus
v_g\ldots v_{i}$. In this map, if we (temporarily) disconnect the edge
containing $h_i^j$ from $v_i^j$, the connected component containing $v_i^j$ is a tree: indeed, if
it was not a tree, this would imply that $v_i$ is connected to an additional
edge of the core, which contradicts the fact that $\mathfrak{m}$ is dominant. For the same
reason, this tree cannot contain any of the vertices $v_{i'}^{j'}$ for
$(i',j')\neq (i,j)$. Hence in $\mathfrak{m}\setminus v_g\ldots v_{i}$, a path
connecting $v_i^j$ to some $v_{i'}^{j'}$  necessarily begins with the edge containing
$h_i^j$. Since a path in the tree $\mathfrak{t}$ is also a path in the map
$\mathfrak{m}\setminus v_g\ldots v_i$, this last property is still true in
$\mathfrak{t}$. 
On the one hand, this implies all the $v_i^j$ have degree $1$ in the skeleton of
$\cup_i c_i$. Since the other vertices of the skeleton  are exactly those nodes 
of $\mathfrak{m}$ which have not been opened
during the construction of $\mathfrak{t}$, they all have degree $3$:
hence $c_*$ is non singular and $(\mathfrak{t},c_*)$ is a valid tree with $g$ triples.
On the other hand, this implies that $h_i^j$ is the incoming half-edge of
$v_i^j$ in $(\mathfrak{t},c_*)$, which ends the proof of the proposition.
\end{proof}

It is now easy to define the converse application of $\Phi$. We begin with a
lemma:
\begin{lemma}
\label{lemma:onlyonegluing}
Let $\mathfrak{m}$ be a unicellular map of genus $g$ given in its canonical
representation, and let $h_1<h_2<h_3$ be three half-edges of $\mathfrak{m}$
incident to three different vertices. Then the gluing of $\mathfrak{m}$ by 
$(h_1,h_2,h_3)$ creates a unicellular map
$\bar{\mathfrak{m}}$ of genus $g+1$. Moreover, if $\bar{\mathfrak{m}}$ is
dominant, the vertex created by that gluing  is an
intertwined node of $\bar{\mathfrak{m}}$. 

On the
contrary, the gluing of $\mathfrak{m}$ by $(h_1,h_3,h_2)$ creates a map of
genus $g$ with three faces.
\end{lemma}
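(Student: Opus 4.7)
The plan is to mirror in reverse the computation of Lemma~\ref{lemma:intertwined}. Since $\mathfrak{m}$ is in canonical form, $\gamma=(1,2,\ldots,2n)$, so writing each vertex as $v_l=(h_l,j^l_1,\ldots,j^l_{n_l})$, the identity $\gamma(\alpha(j^l_{n_l}))=\beta(j^l_{n_l})=h_l$ gives $\alpha(j^l_{n_l})=h_l-1 \pmod{2n}$. For the gluing by $(h_1,h_2,h_3)$, form $\bar v=(h_1,j^1_*,h_2,j^2_*,h_3,j^3_*)$; then $\bar\beta$ differs from $\beta$ only by sending $j^l_{n_l}$ to $h_{l+1}$ (indices mod $3$), so $\bar\gamma=\bar\beta\alpha$ agrees with $\gamma$ everywhere except that $\bar\gamma(h_l-1)=h_{l+1}$. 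Iterating $\bar\gamma$ from $h_1$ yields $h_1\to\cdots\to h_2-1\to h_3\to\cdots\to h_1-1\to h_2\to\cdots\to h_3-1\to h_1$, exhausting all $2n$ half-edges; hence $\bar\gamma$ is a single cycle, $\bar{\mathfrak{m}}$ is unicellular, and Euler's formula yields genus $g+1$.

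For the intertwined claim, note that $h_1,h_2,h_3$ appear at $\bar v$ in the clockwise cyclic order $(h_1,h_2,h_3)$, while the face trace above visits them in the opposite cyclic order $(h_1,h_3,h_2)$. The condition $e_1<e_3<e_2$ defining intertwined nodes in Section~\ref{sec:key} is precisely the assertion that the clockwise cyclic order of the three core half-edges at a node and their cyclic order along the face are opposite; this is an intrinsic reformulation, invariant under the canonical relabeling of $\bar{\mathfrak{m}}$. Provided $\bar{\mathfrak{m}}$ is dominant, $\bar v$ is a degree-$3$ node of its scheme, whose three core half-edges must be $h_1,h_2,h_3$ (a surviving core half-edge among the $j^l_*$ would push the scheme-degree of $\bar v$ above $3$, contradicting dominance); hence $\bar v$ is intertwined.

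Finally, the analogous computation for the gluing by $(h_1,h_3,h_2)$ produces $\bar v=(h_1,j^1_*,h_3,j^3_*,h_2,j^2_*)$ and therefore $\bar\gamma(h_1-1)=h_3$, $\bar\gamma(h_3-1)=h_2$, $\bar\gamma(h_2-1)=h_1$. Tracing $\bar\gamma$ from each of $h_1,h_2,h_3$ now closes three disjoint cycles, supported respectively on $\{h_1,\ldots,h_2-1\}$, $\{h_2,\ldots,h_3-1\}$, and $\{h_3,\ldots,2n\}\cup\{1,\ldots,h_1-1\}$, so $\bar{\mathfrak{m}}$ has three faces and Euler gives genus $g$. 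The main obstacle in the whole proof is purely the bookkeeping of the three modified values of $\bar\gamma$ in each case; once these are identified, both traces and the intertwined characterization follow by direct inspection.
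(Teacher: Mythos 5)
Your computations for the three parts that the paper leaves to the reader are correct and follow exactly the paper's intended route (the paper's own proof literally says ``reverse the computation of Lemma~\ref{lemma:intertwined}'' and omits the details): the identification $\alpha(j^l_{n_l})=h_l-1$, the three modified values of $\bar\gamma$, the single $2n$-cycle for the gluing $(h_1,h_2,h_3)$, the three cycles for $(h_1,h_3,h_2)$, and the Euler-characteristic bookkeeping are all right.

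There is, however, a genuine gap in your justification of the ``intertwined'' claim. You assert that if $\bar{\mathfrak{m}}$ is dominant then the three \emph{core} half-edges at $\bar v$ must be exactly $h_1,h_2,h_3$, arguing only that a surviving core half-edge among the $j^l_*$ would push the scheme-degree above $3$. This rules out \emph{extra} core half-edges, but not the possibility that one of the $h_l$ themselves fails to survive in the core: the edge containing $h_l$ may be a bridge whose far side is a pendant tree, in which case it is pruned and replaced, among the core half-edges at $\bar v$, by one of the $j^l_*$. Concretely, take $\mathfrak{m}$ to be the tree with a center $c$ adjacent to $v_1,v_2,v_3$, where $v_1$ carries an extra pendant leaf $u$, and let $h_1$ be the half-edge of $v_1$ pointing toward $u$ (the $h_2,h_3$ being the leaf half-edges of $v_2,v_3$). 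The glued map is dominant (its core is the theta-graph on $c$ and $\bar v$), but its core half-edges at $\bar v$ are $j^1_1,h_2,h_3$, not $h_1,h_2,h_3$. The conclusion of the lemma still holds in this example (one checks that $\bar v$ is intertwined with respect to $j^1_1,h_2,h_3$), so your argument is repairable --- e.g.\ by showing that the core half-edge in the $l$-th block of $\bar v$ occupies the same position as $h_l$ in both the clockwise order at $\bar v$ and the cyclic order of first appearance along the face, so that the comparison of the two cyclic orders is unaffected --- but as written the step ``the three core half-edges must be $h_1,h_2,h_3$'' is false in the generality in which the lemma is stated, and the intertwined property is only verified under that false premise.
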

\begin{proof}
The proof is almost the same as the one of Lemma~\ref{lemma:intertwined}, taken
at reverse. It is easy to check that the
map obtained by gluing $(h_1,h_3,h_2)$ has genus $g$ and three
faces: intuitively, one can draw this gluing inside the unique face of
$\mathfrak{m}$, splitting it into three faces. On the contrary, 
after gluing $(h_1,h_2,h_3)$,
it is easily checked that the obtained map has one face, and that the order 
of appearance of the half-edges around the new vertex during the tour of the face 
matches the definition of an intertwined
node. We leave the details to the reader (one should carefully follow the
tour of the face, as in the proof of Lemma~\ref{lemma:intertwined}).

Finally, Euler characteristic formula implies that $\bar{\mathfrak{m}}$ has
genus $g+1$.
\end{proof}
In particular, the above lemma says that given three half-edges incident to
different vertices in a unicellular map of genus $g$, there is only one gluing
of these half-edges (among the two possible circular permutations) that creates 
a unicellular map (and it has genus $g+1$).

\begin{definition}
Let $(\mathfrak{t},c_*)$ be a tree with $g$ triples. For all $i$ let
$c_i=\{v_i^1,v_i^2,v_i^3\}$ and let $h_i^1,h_i^2,h_i^3$ be the associated
\emph{incoming} half-edges, as defined above. We proceed to the following construction:
\begin{itemize}
  \item we set $\mathfrak{m}_0 = \mathfrak{t}$
  \item for $i$ from $1$ to $g$, let $\mathfrak{m}_i$ be the map obtained by
  the only gluing of $h_i^1,h_i^2,h_i^3$ in $\mathfrak{m}_{i-1}$ that produces
  a unicellular map. We let $v_i$ be the vertex created by that gluing
  operation.
\end{itemize}
We set $v_*=(v_1,\ldots,v_g)$, and $\Psi(\mathfrak{t},c_*) =
(\mathfrak{m}_g,v_*)$.
\end{definition}
We have:
\begin{theorem}
\label{thm:bij}
The map $\Psi$ is a well-defined application:
$
\Psi : \mathcal{T}_{g,n} \longrightarrow \mathcal{O}^*_{g,n}
$.
Moreover,\begin{itemize}
\item
for every $(\mathfrak{t},c_*)\in \mathcal{T}_{g,n}$,
one has $\Phi\circ\Psi(\mathfrak{t},c_*)=(\mathfrak{t},c_*)$.
\item
for every $(\mathfrak{m},v_*)\in \mathcal{O}^*_{g,n}$, one has
$\Psi\circ\Phi(\mathfrak{m},v_*)=(\mathfrak{m},v_*)$.
\end{itemize}
In other words, $\Psi$ and $\Phi$ are reciprocal bijections between
$\mathcal{T}_{g,n}$ and $\mathcal{O}^*_{g,n}$.
\end{theorem}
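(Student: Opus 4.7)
The plan is to first prove that $\Psi$ is well-defined as a map into $\mathcal{O}^*_{g,n}$, which carries the bulk of the work. Once that is done, the two identities $\Phi\circ\Psi=\mathrm{id}$ and $\Psi\circ\Phi=\mathrm{id}$ follow almost formally from the fact that slicing and gluing are mutually inverse operations (Section~\ref{sec:maps}), together with the uniqueness statement in Lemma~\ref{lemma:onlyonegluing}.

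For well-definedness, I would argue by induction on $i$ that each intermediate $\mathfrak{m}_i$ is a unicellular map of genus $i$. At step $i$, the half-edges $h_i^1,h_i^2,h_i^3$ are incident to $v_i^1,v_i^2,v_i^3$, which are three distinct vertices of $\mathfrak{m}_{i-1}$ (they are untouched by the previous gluings since the triples $c_j$ are pairwise disjoint); hence Lemma~\ref{lemma:onlyonegluing} applies and provides a unique unicellular gluing producing $\mathfrak{m}_i$ of genus $i$. The next and crucial point is that $\mathfrak{m}_g$ is \emph{dominant}. Indeed, the peripheral subtrees of $\mathfrak{t}$ hanging off $\bigcup_j c_j$ remain trees in $\mathfrak{m}_g$ and are trimmed away during core extraction, so the scheme of $\mathfrak{m}_g$ is obtained from the skeleton $\mathfrak{s}(\bigcup_j c_j)$ by merging each triple $c_j$ into one vertex $v_j$. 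Non-singularity forces the internal vertices of the skeleton to have degree $3$ and the leaves to be exactly the elements of $\bigcup_j c_j$; after merging, each $v_j$ acquires degree $3$ from the three incoming half-edges of $c_j$. Thus the scheme of $\mathfrak{m}_g$ has only degree-$3$ vertices, i.e. $\mathfrak{m}_g\in\mathcal{U}_g^*$. Finally, that $v_i$ is an intertwined node of $\mathfrak{m}_g\setminus v_g\ldots v_{i+1}$ follows by downward induction: $v_g$ is intertwined in $\mathfrak{m}_g$ by Lemma~\ref{lemma:onlyonegluing}, slicing $v_g$ yields a dominant unicellular map by Lemma~\ref{lemma:intertwined}, hence $v_{g-1}$ is intertwined by Lemma~\ref{lemma:onlyonegluing} again, and so on; at each step the reciprocity of slicing and gluing plus the uniqueness of Lemma~\ref{lemma:onlyonegluing} ensures that $\mathfrak{m}_g\setminus v_g\ldots v_{j+1}=\mathfrak{m}_j$.

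For $\Phi\circ\Psi=\mathrm{id}$, the same downward induction shows that applying $\Phi$ to $\Psi(\mathfrak{t},c_*)$ successively recovers $\mathfrak{m}_{g-1},\ldots,\mathfrak{m}_0=\mathfrak{t}$, and the triple of vertices created at step $j$ of $\Phi$ is precisely $\{v_j^1,v_j^2,v_j^3\}=c_j$, since slicing undoes gluing. For $\Psi\circ\Phi=\mathrm{id}$, given $(\mathfrak{m},v_*)\in\mathcal{O}^*_{g,n}$ with $\Phi(\mathfrak{m},v_*)=(\mathfrak{t},c_*)$, Proposition~\ref{prop:opening} identifies the three core half-edges at $v_i$ in $\mathfrak{m}\setminus v_g\ldots v_{i+1}$ with the incoming half-edges $h_i^1,h_i^2,h_i^3$ of $c_i$; since the slicing at an intertwined node produces a unicellular map, the reverse gluing is the unique unicellular gluing selected by Lemma~\ref{lemma:onlyonegluing}. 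Therefore $\Psi$ performs, at each step, exactly the inverse of the corresponding slicing in $\Phi$, and reconstructs $(\mathfrak{m},v_*)$.

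The hard part is the bookkeeping of the \emph{incoming} half-edges: one must check that the half-edges $h_i^j$ defined tree-theoretically in $(\mathfrak{t},c_*)$ are still present at the correct vertices of $\mathfrak{m}_{i-1}$, and that they will, after the gluing, become precisely the three core half-edges at $v_i$ — this is what makes the round trip $\Psi\circ\Phi$ coherent. The non-singularity condition guarantees that each $v_i^j$ has a unique direction toward the other triples (so the incoming half-edge is well-defined and unambiguous), and disjointness of the triples ensures it is never destroyed by earlier gluings. Once this coherence is verified, the rest of the proof is the formal interplay between Lemmas~\ref{lemma:intertwined} and~\ref{lemma:onlyonegluing}.
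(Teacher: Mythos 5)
Your proposal is correct and follows essentially the same route as the paper: well-definedness via Lemma~\ref{lemma:onlyonegluing} together with the observation that the core of each $\mathfrak{m}_i$ sits inside the union of paths between the marked vertices (so non-singularity forces the scheme to be trivalent), and the two inverse identities via the reciprocity of slicing and gluing, the uniqueness in Lemma~\ref{lemma:onlyonegluing}, and the last statement of Proposition~\ref{prop:opening}. The only cosmetic difference is that you obtain dominance of the intermediate maps by downward induction through Lemma~\ref{lemma:intertwined}, where the paper notes it directly for every $i$ from the core inclusion; both are valid.
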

\begin{proof}
The fact that for each $i$, $\mathfrak{m}_i$ is a well
defined  unicellular map of genus $i$
is a consequence of the remark following Lemma~\ref{lemma:onlyonegluing}.
Now, observe that for each
$i$, the edges of the core of the map $\mathfrak{m}_i$ are also edges of the
tree $\mathfrak{r}(\{v_i^j,\:i\in\llbracket 1,g\rrbracket,\:j\in\llbracket 1,3
\rrbracket)\}$, as defined in subsection~\ref{subsection:skeleton} (this
inclusion being an equality for $i=g$).
Hence, the core of $\mathfrak{m}_i$ has only vertices of degree $\leq 3$, so
that $\mathfrak{m}_i$ is a dominant unicellular map of genus $i$. It follows
from Lemma~\ref{lemma:onlyonegluing} again that for each $i$, $v_i$ is an
intertwined node of $\mathfrak{m}\setminus v_g\ldots v_{i+1}$. Thus
$(\mathfrak{m}_g,v_*)\in O^*_{g,n}$, and $\Psi$ is well defined.

Now, the fact that for every $(\mathfrak{m},v_*)\in \mathcal{O}^*_{g,n}$, one
has $\Psi\circ\Phi(\mathfrak{m},v_*)=(\mathfrak{m},v_*)$ is a direct consequence of
the last statement of Proposition~\ref{prop:opening}. 

Finally, the fact that for
every $(\mathfrak{t},c_*)\in \mathcal{T}_{g,n}$, one has
$\Phi\circ\Psi(\mathfrak{t},c_*)=(\mathfrak{t},c_*)$ directly follows from
Lemma~\ref{lemma:onlyonegluing}.
\end{proof}

\subsection{Enumerative corollaries}

Theorem~\ref{thm:bij} reduces the enumeration of dominant unicellular maps,
hence the asymptotic enumeration of unicellular maps, to  the one of trees with
$g$ triples. The following lemma gets rid of the non-singularity asssumption.

\begin{lemma}
\label{lemma:singular}
Let $p\geq 3$ be a positive integer. Let $\mathfrak{t}_n$ be a rooted plane tree
with $n$ edges, chosen uniformly at random, and let $\{v_1,\ldots,v_p\}$ a set of
$p$ vertices of $\mathfrak{t}_n$ chosen uniformly at random conditionally to
$\mathfrak{t}_n$. Then: $$
\mathbb{P}\left(\{v_1,\ldots,v_p\} \mbox{ is singular}\right)
=O\left(n^{-1/2}\right)
$$
\end{lemma}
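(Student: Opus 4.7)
The plan is to decompose pairs $(\mathfrak{t}, W)$ with $|W|=p$ according to the shape of the skeleton $\mathfrak{s}(W)$, and to use singularity analysis to show that the singular shapes contribute a vanishing fraction of the total count. Recall from Subsection~\ref{subsection:skeleton} that the set of possible skeleton shapes is finite, so it suffices to obtain a uniform $n$-dependent bound per shape.

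First I set up the combinatorial parameters of a given skeleton $\mathfrak{s}$. Let $a$ be the number of vertices of $W$ that are leaves in $\mathfrak{s}$, let $b=p-a$ be the number of $W$-vertices of degree $\geq 2$ in $\mathfrak{s}$, and let $c$ be the number of non-$W$ vertices of $\mathfrak{s}$ (all of degree $\geq 3$). Then the number of edges is $k=p+c-1$, and the Euler inequality $2k = \sum_v \deg_\mathfrak{s}(v) \geq a+2b+3c$ rearranges to $c \leq p-b-2$. Equality requires $b=0$ and all non-$W$ vertices of $\mathfrak{s}$ to have degree exactly $3$, which is precisely the non-singular case and yields $c=p-2$, $k=2p-3$. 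In every singular case one therefore has $k \leq 2p-4$.

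For each fixed shape $\mathfrak{s}$ with $k$ edges, I estimate the generating function $G_\mathfrak{s}(z)$ of pairs $(\mathfrak{t},W)$ with $\mathfrak{s}(W)=\mathfrak{s}$ by the size $|\mathfrak{t}|$. Decompose $\mathfrak{t}$ as follows: each edge of $\mathfrak{s}$ becomes a chain of $\ell \geq 1$ edges in $\mathfrak{r}(W)$, along which the $\ell-1$ internal vertices carry two arcs of attached plane subtrees; each vertex of $\mathfrak{s}$ of degree $d$ contributes $d$ arcs of attached plane subtrees, one between each pair of consecutive skeleton half-edges in the planar cyclic order. Writing $P(z)=(1-\sqrt{1-4z})/(2z)$ for the GF of plane trees by edges, each arc contributes a factor $\frac{1}{1-zP(z)}$, which equals $2$ at $z=1/4$ (hence is analytic there), while each chain contributes
\[
F(z) \;=\; \sum_{\ell \geq 1} z^\ell (1-zP(z))^{-2(\ell-1)};
\]
using $P(1/4)=2$ a short computation gives $F(z) \sim \tfrac{1}{8}(1-4z)^{-1/2}$ near $z=1/4$. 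Hence $G_\mathfrak{s}(z)$ has singular behaviour of order $(1-4z)^{-k/2}$ up to analytic factors. Including the choice of root of $\mathfrak{t}$ (which raises the singular exponent by $1$, essentially via $z\frac{d}{dz}$), the transfer theorems of Flajolet--Odlyzko yield that the number of rooted pairs with skeleton $\mathfrak{s}$ and $|\mathfrak{t}|=n$ is $O(n^{k/2}\,4^n)$.

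Summing this estimate over the finitely many singular shapes gives at most $O(n^{(2p-4)/2}\,4^n) = O(n^{p-2}\,4^n)$ singular configurations, to be compared with the total number $\binom{n+1}{p}C_n \sim c\, n^{p-3/2}\,4^n$ of pairs. The claimed probability is therefore $O(n^{p-2}/n^{p-3/2})=O(n^{-1/2})$. The main technical hurdle is the careful generating-function bookkeeping---in particular, correctly handling the root of $\mathfrak{t}$ (which may sit inside an arc subtree or inside a chain edge) and the planar cyclic orders at all internal vertices---so that the singular exponent $k/2$ is rigorously identified for each shape; once this is in place, the exponent comparison $k \leq 2p-4 < 2p-3$ delivers the conclusion at once.
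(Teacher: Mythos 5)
Your proof is correct and follows essentially the same route as the paper: decompose $(\mathfrak{t},W)$ by the (finitely many) skeleton shapes, show each shape with $k$ edges contributes $O(n^{k/2}4^n)$ rooted configurations via singularity analysis, and observe that the Euler-characteristic count forces $k\leq 2p-4$ for every singular shape versus $k=2p-3$ for non-singular ones. The only difference is presentational: the paper reuses its $\mathfrak{T}$-substitution and double-rooting argument to get the exact series $\frac1k\frac{zd}{dz}T(z)^k$, whereas you rederive the per-edge factor $F(z)\sim\frac18(1-4z)^{-1/2}$ from a chain decomposition and settle for upper bounds on the singular shapes, which suffices.
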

\begin{proof}
We adapt the arguments of section~\ref{sec:scheme} to the case of trees.
Let us consider the skeleton of $\{v_1,\ldots,v_p\}$ in $\mathfrak{t}_n$.
Recall that for fixed $p$ the number of such skeletons is finite.
For each possible skeleton $\mathfrak{s}$, we let $T_\mathfrak{s}(z)$ be
the generating series of trees with $p$
marked vertices $\{v_1,\ldots,v_p\}$ of skeleton $\mathfrak{s}$.
All such trees are obtained by replacing
the edges of $\mathfrak{s}$ by elements of $\mathfrak{T}$. 
Hence, using the double-rooting argument as at the end of
subsection~\ref{subsec:doublerooting} (i.e. writing the generating series of
rooted trees with $p$ marked vertices whose skeleton has $k$ edges and carries
an additionnal oriented edge), we have: 
$$\sum_{\mathfrak{s}\in\mathcal{X}^p_k}
2k\cdot T_{\mathfrak{s}}(z) = 
\sum_{\mathfrak{s}\in\mathcal{X}^p_k}
2\frac{zd}{dz}T(z)^{k}
$$
which gives:
$$
T_k(z) = \frac{1}{k}\sum_{\mathfrak{s}\in\mathcal{X}^p_k}
\frac{zd}{dz}T(z)^{k}
$$
where $T_{k}(z)=\sum_{\mathfrak{s}\in\mathcal{X}^p_k}
T_{\mathfrak{s}}(z)$ is the generating series of rooted trees with $p$
marked vertices whose skeleton has $k$ edges.
Hence for each $k$, $T_{k}(z)$ is an algebraic
series of singular exponent $-\frac{k}{2}-1$, and the asymptotic regime is 
dominated by those skeletons that have the maximal number of edges. 

Now, among the finite set of possible skeletons, those that maximize the number of edges are those in which the $v_i$'s are leaves, and the other vertices have degree $3$, i.e. 
those where the set $\{v_1,\ldots,v_p\}$ is non-singular.
The lemma then follows by 
extracting the $n$-th
coefficient of the series $T_k$ by transfer theorems (\cite{FlOd}), as in the proof of Lemma~\ref{lemma:dominant}.
\end{proof}

Recalling that trees are counted by Catalan numbers, the number of trees with
$n$ edges and $g$ distinguished disjoint subsets of three vertices is: 
\begin{eqnarray*}
\frac{1}{n+1}{2n\choose n}{n+1\choose 3,\ldots,3,n+1-3g} &=&
\frac{(2n)!}{6^gn!(n+1-3g)!}  \\
&=& \frac{1}{6^g\sqrt{\pi}}n^{-3/2+3g} 4^n
\left(1+O\left(\frac{1}{n}\right)\right)
\end{eqnarray*}
Hence, from the previous lemma, the number of trees with $g$ triples and $n$
edges is: 
\begin{eqnarray}          
\label{eq:Tgn}
\left|\mathcal{T}_{g,n}\right| =
\frac{1}{6^g\sqrt{\pi}}n^{-3/2+3g} 4^n
\left(1+O\left(\frac{1}{\sqrt{n}}\right)\right)
\end{eqnarray}
Now, putting the previous results together gives:
\begin{eqnarray*}
\left|\mathcal{T}_{g,n}\right| &=& 
\left|\mathcal{O}^*_{g,n}\right| \mbox{ [Theorem~\ref{thm:bij}]}\\
&=& 2^gg!\left|\mathcal{U}^*_{g,n}\right| \mbox{
[Proposition~\ref{prop:2^gg!}]}\\ 
&=&2^gg!\left|\mathcal{U}_{g,n}\right| \left(
1+O(n^{-1/2})\right) \mbox{ [Lemma~\ref{lemma:dominant}]}
\end{eqnarray*}
and Equation~\ref{eq:Tgn} gives our first corollary 
(this result has been known for some time from other techniques, see for
example \cite{Goupil-Schaeffer}):
\begin{corollary}
Fix $g\geq 1$. The number of unicellular maps with $n$ edges satisfies, when
$n$ tends to infinity:
 $$
\left|\mathcal{U}_{g,n}\right|=
\frac{n^{3g-\frac{3}{2}}}{12^gg!\sqrt{\pi}} 4^n
\left(1+O\left(\frac{1}{\sqrt{n}}\right)\right)
$$
\end{corollary}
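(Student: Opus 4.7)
The plan is to chain together the four ingredients already assembled in the excerpt, since almost all of the substantive combinatorial work has been done: the bijection, the counting of opening sequences, the dominance of dominant maps, and the explicit count of trees with $g$ triples.

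First I would start by computing $|\mathcal{T}_{g,n}|$ directly. A tree with $g$ triples is, up to the non-singularity constraint, a rooted plane tree with $n$ edges together with an ordered $g$-tuple of disjoint unordered $3$-subsets of its vertex set; the Catalan number times the multinomial $\binom{n+1}{3,\ldots,3,n+1-3g}$ gives $\tfrac{(2n)!}{6^g n!(n+1-3g)!}$, and Stirling yields $\tfrac{1}{6^g\sqrt{\pi}} n^{3g-3/2} 4^n(1+O(1/n))$. Lemma~\ref{lemma:singular} guarantees that the singular contribution only changes this by a factor $1+O(n^{-1/2})$, so equation~(\ref{eq:Tgn}) holds.

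Next I would invoke the three structural results in order. Theorem~\ref{thm:bij} gives $|\mathcal{T}_{g,n}|=|\mathcal{O}^*_{g,n}|$; Proposition~\ref{prop:2^gg!} gives $|\mathcal{O}^*_{g,n}|=2^g g!\,|\mathcal{U}^*_{g,n}|$, since each dominant map admits exactly $2^g g!$ opening sequences; and Lemma~\ref{lemma:dominant} gives $|\mathcal{U}^*_{g,n}|=|\mathcal{U}_{g,n}|(1+O(n^{-1/2}))$, since dominant maps make up all but a $O(n^{-1/2})$ fraction of unicellular maps of genus $g$. Combining these,
\begin{equation*}
|\mathcal{U}_{g,n}|=\frac{|\mathcal{T}_{g,n}|}{2^g g!}\bigl(1+O(n^{-1/2})\bigr)=\frac{n^{3g-3/2}}{2^g g!\cdot 6^g\sqrt{\pi}}\,4^n\bigl(1+O(n^{-1/2})\bigr),
\end{equation*}
and since $2^g\cdot 6^g=12^g$ this is exactly the claimed formula.

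There is no real obstacle here; the delicate steps were the key lemma on intertwined nodes (Lemma~\ref{lemma:key}) and the bijection $\Phi,\Psi$, both already established. The only point that warrants care is bookkeeping of the error terms: the $O(n^{-1/2})$ from Lemma~\ref{lemma:dominant}, the $O(n^{-1/2})$ from the non-singularity estimate in Lemma~\ref{lemma:singular}, and the $O(1/n)$ from Stirling combine to a single $O(n^{-1/2})$, which is the weakest of the three and thus governs the remainder.
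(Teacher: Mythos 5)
Your proposal is correct and follows essentially the same route as the paper: count all trees with $g$ distinguished disjoint triples via Catalan numbers and the multinomial, discard the singular ones using Lemma~\ref{lemma:singular}, and then chain Theorem~\ref{thm:bij}, Proposition~\ref{prop:2^gg!}, and Lemma~\ref{lemma:dominant}, with the error terms combining into a single $O(n^{-1/2})$. Nothing is missing.
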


Since we are dealing from the beginning with dominant maps, it seems
unnecessary to try to get rid of Lemma~\ref{lemma:dominant}, and to try to
perform the exact enumeration of trees with $g$ triples.
Let us mention, however, a very simple case where our bijection applies and
enables to perform exact enumeration. This gives an easy bijective proof of a known
formula of Lehman and Walsh (precisely, the next corollary is a special case of
Equation~9 in \cite{Lehman-Walsh-genus-I} ; see also 
\cite{Bacher-Vdovina,Goupil-Schaeffer}). A \emph{triangulation} is a map where all faces have
degree $3$. Triangulations with one vertex are in bijection, by classical duality, with maps with one face and all vertices of degree $3$: these maps are exactly
our \emph{dominant schemes}.
\begin{corollary}
Let $\mathcal{T}^*_g$ be the set of rooted plane trees with $6g-3$ edges,
$3g$ leaves and $3g-2$ vertices of degree $3$. The bijection $\Phi$
specializes to a bijection between the set of dominant schemes
equipped with an opening sequence and the set of pairs $(\mathfrak{t},c_*)$,
where $\mathfrak{t}\in\mathcal{T}^*_g$ and $c_*$ is an ordered  partition of the
leaves of $\mathfrak{t}$ in sets of three elements.

The number of dominant schemes of genus $g$ (equivalently, of
rooted triangulations of genus $g$ with one vertex) is:
$$
\left|\mathcal{S}^*_{g}\right|=\frac{2(6g-3)!}{12^gg!(3g-2)!}
$$
\end{corollary}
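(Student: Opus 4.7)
The plan is to apply the bijection $\Phi$ of Theorem~\ref{thm:bij} in the special case where $\mathfrak{m}$ is itself a dominant scheme, and then do a generating-function count. First, I would check that the restriction is well-defined. Since a dominant scheme has no attached plane trees and all $4g-2$ of its vertices are nodes of degree $3$, each opening slices a degree-$3$ vertex by all three of its (core) half-edges, turning it into three brand-new leaves; crucially, the remaining vertices keep their degree equal to $3$ in the resulting map (only their \emph{core} degree can drop). Iterating $g$ times produces a plane tree with $6g-3$ edges, exactly $3g$ leaves and $3g-2$ untouched degree-$3$ vertices, i.e., an element of $\mathcal{T}^*_g$; moreover the triple $c_i$ recorded by $\Phi$ at step~$i$ is the triple of leaves just created, so $c_*$ is an ordered partition of the $3g$ leaves into $g$ triples. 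The converse via $\Psi$ is symmetric: each gluing merges three leaves into one degree-$3$ vertex, and after $g$ gluings every vertex of the output has degree $3$, so the output is indeed a dominant scheme.

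Second, I would turn this restricted bijection into counting. The number of ordered partitions of the $3g$ leaves of a given $\mathfrak{t}\in\mathcal{T}^*_g$ into $g$ triples is $(3g)!/6^g$, and by Proposition~\ref{prop:2^gg!} each dominant scheme carries exactly $2^g g!$ opening sequences; hence
\[
 |\mathcal{S}^*_g| \;=\; \frac{(3g)!\,|\mathcal{T}^*_g|}{12^g\, g!}.
\]

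Third, and this is where the real work lies, I would enumerate $\mathcal{T}^*_g$ via a generating-function argument. Let $L(z)$ be the series, with $z$ marking edges (including the input edge), of rooted subtrees attached to an oriented input half-edge, all of whose internal vertices have degree $3$. The dichotomy between a single leaf and a degree-$3$ vertex carrying two further such subtrees gives $L = z + zL^2$, hence $[z^{2k+1}]L = C_k$ (Catalan number). A tree in $\mathcal{T}^*_g$ is rooted at an oriented edge whose root vertex has degree $1$ or $3$, so its generating function is $L+L^3 = L(1+L^2) = L \cdot (L/z) = L^2/z$, using the functional equation. The Catalan convolution then gives
\[
 |\mathcal{T}^*_g| \;=\; [z^{6g-2}]\,L^2 \;=\; \sum_{j+k=3g-2} C_j C_k \;=\; C_{3g-1} \;=\; \frac{2(6g-3)!}{(3g)!\,(3g-2)!},
\]
and substituting into the previous display produces the announced formula. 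The main obstacle is precisely this last closed-form enumeration of $\mathcal{T}^*_g$; the verification that $\Phi$ restricts as claimed is a direct reading-off of the definitions once one checks that opened vertices never disturb the degrees of the remaining nodes.
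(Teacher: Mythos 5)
Your proof is correct, and the structural part (checking that $\Phi$ and $\Psi$ restrict to the claimed bijection, and the factors $(3g)!/6^g$ and $2^g g!$) coincides with the paper's argument; the only point you leave implicit is that any partition of the leaves of $\mathfrak{t}\in\mathcal{T}^*_g$ into triples is automatically non-singular, which the paper also dispatches in one line. Where you genuinely diverge is the enumeration of $\mathcal{T}^*_g$. The paper uses its recurring ``double-rooting'' trick: counting trees carrying both a root and a distinguished leaf in two ways gives $3g\,|\mathcal{T}^*_g| = 2(6g-3)\,|\widehat{\mathcal{T}}^*_g|$, and leaf-rooted trees are identified with binary trees with $3g-2$ inner nodes, yielding $|\widehat{\mathcal{T}}^*_g|=\frac{(6g-4)!}{(3g-1)!(3g-2)!}$. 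You instead decompose at the root vertex with the planted series $L=z+zL^2$, observe $L+L^3=L^2/z$, and invoke the Catalan convolution to get $|\mathcal{T}^*_g|=C_{3g-1}$; one checks that $C_{3g-1}=\frac{2(6g-3)!}{(3g)!(3g-2)!}$ agrees with the paper's value, so the final formula comes out the same. Your route is self-contained and arguably more transparent for a reader who knows Catalan numbers, while the paper's re-rooting argument is consistent with the technique it uses elsewhere (Section~\ref{subsec:doublerooting} and Lemma~\ref{lemma:singular}) and avoids introducing a new generating function. Either way the corollary follows.
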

\begin{proof}
Let $(\mathfrak{s},v_*)$ be a dominant scheme equipped with an opening sequence,
and let $\mathfrak(t,c_*)=\Phi(\mathfrak{s},v_*)$. By definition, all the
vertices of $\mathfrak{s}$ are nodes, and have degree $3$. The nodes which
belong to $v_*$ will give birth to three leaves in $\mathfrak{t}$,
whereas the other nodes will remain vertices of degree $3$ in $\mathfrak{t}$.
Consequently, $\mathfrak{t}$ is an element of $\mathcal{T}^*_g$.  Conversely,
it is clear that for any $\mathfrak{t}\in\mathcal{T}^*_g$, every partition
$c_*$ of the leaves of $\mathfrak{t}$ in sets of three elements is non-singular.
Moreover, such a pair $\mathfrak{t},c_*$ being given,
$\Psi(\mathfrak{t},c_*)$ is an opened map with all vertices of degree $3$,
i.e. a dominant scheme with an opening sequence. This proves the first
assertion of the corollary.

We now compute the cardinality of $\mathcal{T}^*_g$. First observe that an
element of $\mathcal{T}^*_g$ has $2(6g-3)$ half-edges, $3g$ of them
begin attached to a leaf. Hence counting trees which have at the same time a
root and a distinguished leaf gives:
$$
3g\cdot \left|\mathcal{T}^*_g\right| = 2(6g-3)\cdot
\left|\widehat{\mathcal{T}}^*_g\right|
$$
where $\widehat{\mathcal{T}}^*_g$ is the set of elements of $\mathcal{T}^*_g$
which are rooted at a leaf. Now, by removing the root edge, one
easily sees that $\widehat{\mathcal{T}}^*_g$ is in bijection with rooted binary
trees with $3g-2$ inner nodes (and $6g-4$ edges), so that:
$
\left|\widehat{\mathcal{T}}^*_g\right|=
\frac{(6g-4)!}{(3g-1)!(3g-2)!}
$. This gives:
$$
\left|\mathcal{T}^*_g\right|=
\frac{2(6g-3)!}{(3g)!(3g-2)!}
$$

Finally, the number of ways to partition the leaves in sets of three elements
is $\frac{(3g)!}{(3!)^g}$, so that the number of dominant schemes of genus $g$
with an opening sequence is:
$$
\frac{(3g)!}{(3!)^g} \cdot \left|\mathcal{T}^*_g\right| = 
\frac{2(6g-3)!}{6^g(3g-2)!}
$$
Applying Proposition~\ref{prop:2^gg!} and dividing by $2^g g!$ gives the second
statement of the corollary.%
\end{proof}

\section{The case of labelled unicelullar maps: labelled trees and ISE.}

\label{sec:labelled}

\subsection{The Marcus-Schaeffer bijection and the volume constant of maps of
genus $g$} 
Our interest for unicellular maps originally comes from the fact that
labelled unicellular maps are in bijection with all maps. We begin with a short reminder
of this fact.

\begin{definition}
A \emph{labelled unicellular map of genus $g$} is a rooted unicellular map
$\mathfrak{m}$ of genus $g$, together with an application:
$$
l: \{\mbox{vertices of }\mathfrak{m}\}\longrightarrow \mathbb{Z}
$$
such that:
\begin{itemize}
  \item[i.] $l(\mbox{root})=0$
  \item[ii.] if two vertices $v_1$ and $v_2$ are linked by an edge in
  $\mathfrak{m}$, then $l(v_1)-l(v_2)$ is an element of $\{-1,0,+1\}$.
\end{itemize}
The set of labelled unicellular maps of genus $g$ is denoted
$\mathcal{L}_{g}$. 
The set of labelled unicellular maps of genus $g$, which are
moreover dominant (in the sense of the previous sections) is denoted
$\mathcal{L}^*_{g}$.
\end{definition}
We also let $\mathcal{Q}_{g,n}$ be the set of rooted bipartite
quadrangulations with $n$ faces, and $\mathcal{Q}^\bullet_{g,n}$ be the set
of rooted bipartite quadrangulations with $n$ edges which carry an additional
distinguished vertex. Since a quadrangulation of genus $g$ with $n$ faces has
$n+2-2g$ vertices, one has:
$|\mathcal{Q}^\bullet_{g,n}|=(n+2-2g)|\mathcal{Q}_{g,n}|$.
Our motivation for studying bipartite quadrangulations is the classical
bijection of Tutte (\cite{Tutte:census}), which says that they are in
bijection with (general) rooted maps, and in particular that the number of
rooted maps with $n$ edges and genus $g$ satisfies:
$$
\left|\mathcal{M}_{g,n}\right|=\left|\mathcal{Q}_{g,n}\right|
$$
The following theorem is a
reminder of the known bijections between quadrangulations and labelled unicellular maps.
\begin{othertheorem}[\cite{MaSc,ChMaSc}]
There exists a bijection:
$$\tau: \mathcal{Q}^\bullet_{g,n} \longrightarrow
\{0,1\}\times\mathcal{L}_{g,n} $$
such that for every quadrangulation $\mathfrak{q}$ of pointed vertex $\bullet$,
and such that $\tau(\mathfrak{q})=(\epsilon,\mathfrak{l})$, there is a bijection:
$\nu: \{\mbox{vertices of }
\mathfrak{q}\}\setminus\{\bullet\}\rightarrow\{\mbox{vertices of }
\mathfrak{l}\}$ such that for every 
non-root vertex $v$ of $\mathfrak{q}$, 
$l(\nu(v))-\min_w\{l(\nu(w))\}+1$ is the graph-distance between $v$ and
$\bullet$ in $\mathfrak{q}$.

Moreover, one has, when $n$ tends to infinity:
\begin{eqnarray}
\label{eq:dominant}
\frac{|\mathcal{L}^*_{g,n}|}{|\mathcal{L}_{g,n}|} = 1-O\left(n^{-1/4}\right)
\end{eqnarray}
and 
$\displaystyle
|\mathcal{Q}^\bullet_{g,n}| =  t_g n^{\frac{5g-3}{2}} 12 ^n
\left(1+O\left(n^{-1/4}\right)\right) $
for some positive constant $t_g$.
\end{othertheorem}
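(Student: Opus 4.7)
The statement has three parts: the bijection $\tau$, the dominance relation, and the asymptotic count with constant $t_g$. Since the theorem is cited to \cite{MaSc,ChMaSc}, I would not rebuild everything from scratch but indicate how each part follows.

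For the bijection $\tau$, the plan is to invoke the Marcus--Schaeffer construction exactly as presented in \cite{MaSc} for the planar case and extended in \cite{ChMaSc} to arbitrary genus. Concretely: on a rooted bipartite quadrangulation $\mathfrak{q}$ pointed at $\bullet$, label each vertex by its graph-distance to $\bullet$. In each face of $\mathfrak{q}$ the four labels fall in one of two local configurations, and in each case a canonical diagonal is drawn; after deleting the edges of $\mathfrak{q}$ and keeping only the diagonals, we obtain a map with $n$ edges and, by Euler, a single face — hence an element of $\mathcal{L}_{g,n}$ once the labels are shifted so that the root vertex gets label $0$. The extra bit in $\{0,1\}$ records the standard rooting ambiguity around $\bullet$. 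The bijection property of $\tau$ and the translation between labels and distances are precisely the content of~\cite[Thm.~2]{ChMaSc}; I would just recall these statements.

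For the asymptotic count, I would rerun the scheme analysis of Section~\ref{sec:scheme} in the labelled setting. Every labelled unicellular map of genus $g$ still decomposes as a scheme $\mathfrak{s}\in\mathcal{S}_g$ whose $k=|\mathfrak{s}|$ edges are substituted by \emph{labelled} elements of $\mathfrak{T}$, the labels of vertices shared between adjacent substituted trees being forced to agree. The relevant labelled tree series is now $L(z)=T(3z)=\tfrac{1}{2}\bigl((1-12z)^{-1/2}-1\bigr)$, since each edge of a rooted plane tree carries an independent label step in $\{-1,0,+1\}$. The substitution leads (by the same double-rooting argument as in Section~\ref{subsec:doublerooting}) to an expression of $\sum_{\mathfrak{s}\in\mathcal{S}_g}$-type, but one must also sum over label assignments at scheme vertices. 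This is done by a Fourier/local-limit argument: each scheme edge contributes a bivariate labelled-tree series, and the sum over labels reduces to an integral over the $2g$-dimensional cycle space of the scheme. Extracting coefficients with the transfer theorems of \cite{FlOd} shows that each scheme of size $k$ contributes $\sim n^{k/2-g-1}\, 12^{n}$ up to constants; the dominant schemes are again those with $k=6g-3$ (only degree-$3$ vertices), which yields the exponent $(5g-3)/2$ claimed in the theorem and defines the constant $t_g$ (and then extended to $\mathcal{Q}^\bullet_{g,n}$ by the bijection $\tau$).

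The same analysis separates dominant schemes from the rest: the next-to-leading schemes have $k=6g-4$ and, after accounting for label-localization losses, contribute $\sim n^{(5g-3)/2-1/4}\,12^{n}$ rather than $n^{(5g-3)/2-1/2}\,12^n$ as in the unlabelled case (the loss of a factor $n^{1/2}$ instead of $n^{1/4}$ is what distinguishes Equation~\ref{eq:dominant} from Lemma~\ref{lemma:dominant}); this gives the ratio $|\mathcal{L}^*_{g,n}|/|\mathcal{L}_{g,n}|=1-O(n^{-1/4})$. The main technical obstacle is precisely this local-limit step: it requires uniform control of the labelled tree generating series near $z=1/12$ jointly in the Fourier dual of the label, a standard but delicate point because labels are on the scale $n^{1/4}$ rather than $n^{1/2}$, reflecting the Brownian snake / ISE scaling that will be exploited in the next sections.
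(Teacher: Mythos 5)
Your handling of the bijection $\tau$ is fine and matches the paper, which (like you) simply quotes it, together with Equation~\ref{eq:dominant}, from \cite{MaSc,ChMaSc}; only the final asymptotic estimate is re-proved in the paper. For that estimate you take a genuinely different route --- essentially the original scheme decomposition of labelled unicellular maps from \cite{ChMaSc} plus a Fourier/local-limit treatment of the vertex labels --- whereas the paper deduces it from its own bijection: $|\mathcal{Q}^\bullet_{g,n}|=2|\mathcal{L}_{g,n}|$, then $|\mathcal{L}^*_{g,n}|=\frac{1}{2^gg!}|\mathcal{W}_{g,n}|$ (Corollary~\ref{cor:bijlabelled}), and finally the enumeration of $\mathcal{W}_{g,n}$ via skeletons of marked vertices in a labelled \emph{tree} (Lemma~\ref{lemma:enumskeleton}). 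That detour is precisely what yields $t_g$ as a single ISE moment rather than a sum over schemes. More importantly, neither the paper nor \cite{ChMaSc} needs any local-limit estimate: the bivariate labelled-tree series is computed \emph{exactly} as $N_i(z)=(B(z)-\mathbbm{1}_{i=0})U(z)^i$ via Motzkin walks (Lemma~\ref{lemma:treewalks}), and the sum over scheme-vertex labels collapses into geometric series $U^{d_i}/(1-U^{d_i})$, each contributing a factor $(1-12z)^{-1/4}$. The ``delicate uniform control in the Fourier dual'' that you flag is therefore a self-inflicted obstacle, and since you do not carry it out, this step is a gap in your argument as written.

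There are also two concrete errors. First, your per-scheme exponent $n^{k/2-g-1}$ is inconsistent with your own conclusion: at $k=6g-3$ it gives $n^{2g-5/2}$, not $n^{(5g-3)/2}$. The correct bookkeeping is that a labelled scheme with $k$ edges and $M+1$ distinct vertex labels contributes a singularity $(1-12z)^{-(k+M)/4-1}$, hence coefficients of order $n^{(k+M)/4}12^n$, maximized at $k=6g-3$, $M=4g-3$, giving $(10g-6)/4=(5g-3)/2$. Second, your mechanism for Equation~\ref{eq:dominant} is wrong: deleting an edge from the scheme at fixed genus also deletes a scheme vertex, so both $k$ and the maximal $M$ drop by one, and non-dominant schemes contribute at relative order $n^{-1/2}$, exactly as in Lemma~\ref{lemma:dominant}. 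The genuine quarter-power corrections come from label coincidences among the vertices of the \emph{dominant} scheme, which do not affect membership in $\mathcal{L}^*_{g,n}$. Your stated bound $O(n^{-1/4})$ is still true --- it is weaker than the actual $O(n^{-1/2})$ --- but not for the reason you give.
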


We call $t_g$ the \emph{volume constant} of maps of genus $g$. Observe that we
have:
\begin{eqnarray}
\label{eq:deftg}
|\mathcal{M}_{g,n}| \sim  t_g n^{\frac{5(g-1)}{2}} 12 ^n.
\end{eqnarray}
In \cite{ChMaSc}, an expression for $t_g$ is found which involves a finite sum
which is not easy to compute in practice. In the rest of this section, we give
another  proof of the last statement of the theorem that uses our bijection.
In particular, we obtain  another expression for the constant $t_g$,
related to random trees and ISE.

\begin{definition}\label{def:welllabelledtriples}
A \emph{labelled tree with $g$ well-labelled triples} is a tree with $g$
triples $(\mathfrak{t},c_*)$, together with an application 
$$
l: \{\mbox{vertices of }\mathfrak{t}\}\longrightarrow \mathbb{Z}
$$
such that:
\begin{itemize}
  \item[i.] $l(\mbox{root})=0$
  \item[ii.] if two vertices $v_1$ and $v_2$ are linked by an edge in
  $\mathfrak{t}$, then $l(v_1)-l(v_2)$ is an element of $\{-1,0,+1\}$.
  \item[iii.] for every $i\in\llbracket 1, g \rrbracket$, if
  $c_i=\{v_i^1,v_i^2,v_i^3\}$, then $l(v_i^1)=l(v_i^2)=l(v_i^3)$.
\end{itemize}
$\mathcal{W}_{g}$ is the set of labelled trees with $g$ well-labelled triples.
\end{definition}

It is clear that the bijection $\Phi$ extends to the case of
labelled unicellular maps: the only thing to check is, before gluing three
vertices, that they have the same label. This is exactly done in the definition
above. Hence:
\begin{corollary}
\label{cor:bijlabelled}
The applications $\Phi$ and $\Psi$ extend to bijections between labelled trees
with $g$ well-labelled triples and $n$ edges, and dominant labelled unicellular
maps with $n$ edges equipped with an opening sequence. One has:
$$
\left|\mathcal{W}_{g,n}\right|= 2^gg!
\left|\mathcal{L}^*_{g,n}\right|
$$
\end{corollary}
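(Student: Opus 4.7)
The plan is to lift the unlabelled bijection $\Phi,\Psi$ of Theorem~\ref{thm:bij} to the labelled setting by transporting labels through the slicing and gluing operations, and then to deduce the counting identity from the labelled analogue of Proposition~\ref{prop:2^gg!}.

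First I would check that $\Phi$ sends $\mathcal{L}^*_{g,n}$ (equipped with opening sequences) into $\mathcal{W}_{g,n}$. Given a labelled opened map $(\mathfrak{m},v_*,l)$, the map $\mathfrak{t}=\mathfrak{m}\setminus v_g\cdots v_1$ is a plane tree by Theorem~\ref{thm:bij}. Each slicing of an intertwined node $v_i$ replaces $v_i$ by three vertices $v_i^1,v_i^2,v_i^3$; I label each of these by $l(v_i)$, and I leave all other labels unchanged. No edge of $\mathfrak{t}$ is new (slicing merely redistributes incidences), and for every edge the pair of endpoint labels is the same as in $\mathfrak{m}$, so axioms (i)--(ii) of Definition~\ref{def:welllabelledtriples} survive. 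Axiom (iii) is immediate since $l(v_i^1)=l(v_i^2)=l(v_i^3)=l(v_i)$ by construction. Thus $(\mathfrak{t},c_*,l)\in\mathcal{W}_{g,n}$.

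Conversely, for $\Psi$ applied to $(\mathfrak{t},c_*,l)\in\mathcal{W}_{g,n}$, at the $i$-th step the three vertices of $c_i$ are merged into a single vertex $v_i$ of $\mathfrak{m}_i$; I assign $v_i$ the common value $l(v_i^1)=l(v_i^2)=l(v_i^3)$, which is well defined precisely by axiom (iii). Again gluing preserves the edge set with its endpoint-label pairs, so axiom (ii) continues to hold on each $\mathfrak{m}_i$; axiom (i) is preserved because the root vertex of $\mathfrak{t}$, which is not contained in any $c_i$ (being of degree~$\le 1$ in the skeleton, as discussed in Section~\ref{subsection:skeleton}, forbids it in a non-singular triple system unless it is a triple member — in that case its three copies all inherit label $0$, and the root after gluing still has label $0$). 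Hence $\Psi$ produces an element of $\mathcal{L}^*_{g,n}$ with an opening sequence, and the identities $\Phi\circ\Psi=\mathrm{id}$ and $\Psi\circ\Phi=\mathrm{id}$ follow immediately from their unlabelled counterparts since the labelling operation commutes with the combinatorial bijection.

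For the enumerative identity, I would note that Proposition~\ref{prop:2^gg!} is a purely combinatorial count of opening sequences for an unlabelled dominant map; since attaching a labelling $l$ does not alter which nodes are intertwined (the notion depends only on $\alpha,\beta,\gamma$), every element of $\mathcal{L}^*_{g,n}$ admits exactly $2^g g!$ opening sequences. Combining this with the just-established bijection gives $|\mathcal{W}_{g,n}|=2^g g!\,|\mathcal{L}^*_{g,n}|$. The only point that deserves care in this whole argument is the compatibility of labels with the slicing/gluing at each intermediate step, and this is handled uniformly by observing that both operations preserve the multiset of labelled edges and therefore automatically preserve axiom (ii).
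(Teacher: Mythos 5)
Your proposal is correct and follows essentially the same route as the paper, which disposes of this corollary in one sentence: the bijection of Theorem~\ref{thm:bij} transports labels verbatim because slicing and gluing leave the edge set and the endpoint labels of every edge unchanged, and condition~(iii) of Definition~\ref{def:welllabelledtriples} is exactly what is needed to label the merged vertex; the count then follows from Proposition~\ref{prop:2^gg!} since the notion of intertwined node is independent of the labelling. Your write-up merely makes explicit the checks the paper declares ``clear,'' so there is nothing to add.
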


A \emph{labelled tree} is a rooted plane tree with an application $l$ that
satifies the properties $i$ and $ii$ of Definition~\ref{def:welllabelledtriples}. Equivalently, a
labelled tree is a rooted plane tree with an application $\{edges\}
\longrightarrow \{-1,0,+1\}$, which encodes the variation of the label when
crossing this edge coming from the root. Since a tree has no cycle, this
application has no constraint to satisfy, and the number of rooted labelled
trees with $n$ edges is:
$\displaystyle
\frac{3^n}{n+1}{2n\choose n}
$.
In what follows, the label function of a rooted labelled tree is always denoted
$l$.

\begin{lemma}
\label{lemma:enumskeleton}
Let $\mathcal{R}_{g,n}$ be the set of rooted labelled trees with $n$ edges
which carry (non necessarily distinct) distinguished vertices
$v_1,v_2,\ldots,v_{3g}$ such that for all $i\in\llbracket 1,g \rrbracket :
l(v_{3i-2})=l(v_{3i-1})=l(v_{3i})$. Then one has:
$$
\left|\mathcal{R}_{g,n}\right|
=c_g n^{\frac{5g-3}{2}} 12^n
\left(1+O\left(n^{-1/4}\right)\right)
$$
for some positive constant $c_g$.

Moreover:
$$
\left|\mathcal{W}_{g,n}\right|
=\frac{\left|\mathcal{R}_{g,n}\right|}{6^g}
\left(1+O\left(n^{-1/4}\right)\right)
$$
\end{lemma}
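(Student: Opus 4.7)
The plan is to first rewrite $|\mathcal{R}_{g,n}|$ as a sum over labelled trees involving the label profile, extract the asymptotic from the convergence of that profile to the ISE, and then deduce the second identity through a $6^g$-to-$1$ correspondence between a dominating subset of $\mathcal{R}_{g,n}$ and $\mathcal{W}_{g,n}$.

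For a labelled tree $\mathfrak{t}$ with $n$ edges, write $N_k(\mathfrak{t})$ for the number of its vertices labelled $k$. An element of $\mathcal{R}_{g,n}$ is specified by a labelled tree together with, for each $i\in\llbracket 1,g\rrbracket$, an ordered triple of vertices (with repetition allowed) sharing a common label, so
\begin{equation*}
|\mathcal{R}_{g,n}| \;=\; \sum_{\mathfrak{t}}\Bigl(\sum_{k\in\mathbb{Z}} N_k(\mathfrak{t})^3\Bigr)^{\!g},
\end{equation*}
the outer sum running over rooted labelled trees with $n$ edges. The total number of such trees equals $\frac{3^n}{n+1}\binom{2n}{n}\sim\frac{12^n}{\sqrt{\pi}\,n^{3/2}}$. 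Under the uniform law on these trees, the rescaled label profile $x\mapsto n^{-3/4}N_{\lfloor xn^{1/4}\rfloor}$ converges to the random density $f_\ISE$; this is classical and recorded in \cite{Aldous-treebasedmodels,MBM-Janson,Marckert-Mokkadem-BrownianMap}. A change of variables turns $n^{-5/2}\sum_k N_k(\mathfrak{t})^3$ into a Riemann sum for $\int f_\ISE^3$, and uniform integrability, following from $n^{-5/2}\sum_k N_k^3\leq 2\bigl(n^{-3/4}\max_k N_k\bigr)^{2}$ together with tail bounds on $\max_k N_k / n^{3/4}$, ensures convergence of the $g$-th moment. Combined with the tree count this gives
\begin{equation*}
|\mathcal{R}_{g,n}| = c_g\, n^{(5g-3)/2}\, 12^n\, (1+o(1)), \qquad c_g = \pi^{-1/2}\,\mathbb{E}\!\left[\Bigl(\int f_\ISE^3\Bigr)^{\!g}\right].
\end{equation*}
The sharper rate $O(n^{-1/4})$ is obtained from the quantitative version of the ISE convergence.

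For the second identity, any element of $\mathcal{W}_{g,n}$ produces $6^g$ elements of $\mathcal{R}_{g,n}$ by arbitrarily ordering each of its triples, and these configurations have $3g$ pairwise distinct distinguished vertices whose union is non-singular; conversely, any element of $\mathcal{R}_{g,n}$ with these two properties arises from a unique element of $\mathcal{W}_{g,n}$. It remains to estimate the defect. Configurations where two distinguished vertices coincide are counted by expressions in which a factor $N_k^3$ degenerates to $N_k^2$ or $N_k$; since typically $\sum_k N_k^2 = O(n^{7/4})$ against $\sum_k N_k^3 = O(n^{5/2})$, each such collision costs a relative factor $O(n^{-3/4})$, which is absorbed into $O(n^{-1/4})$. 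Configurations with pairwise distinct but singular marked set are handled by adapting Lemma~\ref{lemma:singular}: one stratifies by the skeleton of the marked set and observes that non-singular skeletons are exactly those maximizing the number of edges, hence having the strictly largest singular exponents in the corresponding generating series (as in Section~\ref{sec:scheme}); the label-matching constraint within each triple multiplies these series by bounded correction factors that do not affect singular exponents, and transfer theorems \cite{FlOd} yield a relative contribution $O(n^{-1/2})$ from singular skeletons.

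I expect the main obstacle to be the quantitative rate $O(n^{-1/4})$ in the first identity. Existence of the limit constant $c_g$ and moment convergence are standard consequences of ISE convergence, but upgrading to an explicit polynomial convergence rate requires a Berry--Esseen-style estimate for $n^{-5/2}\sum_k N_k^3\to\int f_\ISE^3$, combining a local central limit theorem for the label of a typical vertex with a uniform control of the fluctuations of the underlying tree shape.
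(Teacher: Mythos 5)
Your route is genuinely different from the paper's: you work directly with the exact identity $|\mathcal{R}_{g,n}|=\sum_{\mathfrak{t}}\bigl(\sum_k N_k(\mathfrak{t})^3\bigr)^g$ and try to extract the asymptotics from convergence of the rescaled label profile to $f_\ISE$, whereas the paper never touches ISE at this stage. It decomposes an element of $\mathcal{R}_{g,n}$ along the \emph{labelled skeleton} of the marked vertices, writes the generating series of each stratum as a product over skeleton edges of the explicit algebraic series $N_i(z)=(B(z)-\mathbbm{1}_{i=0})U(z)^i$ of Lemma~\ref{lemma:treewalks} (together with geometric sums over the label increments), and reads off a singular expansion in powers of $(1-12z)^{1/4}$ with exponent $-\frac{k+M}{4}-1$, where $k$ is the number of skeleton edges and $M+1$ the number of distinct labels on the skeleton. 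Transfer theorems then give both the $n^{(5g-3)/2}12^n$ growth and, because all exponents are multiples of $\frac14$ and the maximum $k+M=10g-6$ is attained exactly by the non-singular, all-vertices-distinct configurations, the $O(n^{-1/4})$ error in \emph{both} statements at once. The probabilistic content (Theorems~\ref{thm:proba} and~\ref{thm:ISE}) is then \emph{deduced} from this count, in the direction opposite to yours; in particular the finiteness of $\mathbb{E}\bigl[\bigl(\int f_\ISE^3\bigr)^g\bigr]$ is obtained there as a by-product rather than needed as an input.

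The genuine gap is the one you flag yourself: the rate $O(n^{-1/4})$ in the first identity. Weak convergence of the profile gives $\mathbb{E}[W_n^g]\to\mathbb{E}[W^g]$ (granting your uniform-integrability argument via tail bounds on $\max_k N_k/n^{3/4}$), hence only $|\mathcal{R}_{g,n}|=c_g n^{(5g-3)/2}12^n(1+o(1))$. There is no off-the-shelf ``quantitative version of the ISE convergence'' delivering a polynomial rate for the moments of $\int f_n^3$, and the error term is not decorative: it propagates into Equation~\ref{eq:dominant} and into the stated error for $|\mathcal{Q}^\bullet_{g,n}|$. As written, your argument proves a strictly weaker statement. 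A secondary flaw is your claim, in the treatment of singular skeletons, that the label-matching constraint ``multiplies these series by bounded correction factors that do not affect singular exponents.'' That is false: each distinct label value on the skeleton contributes a factor $\frac{U^{d}}{1-U^{d}}\sim C\,(1-12z)^{-1/4}$, so the labels shift the singular exponent by $-M/4$; the correct comparison between strata is via $k+M$, not via $k$ alone, and it is exactly this bookkeeping that justifies the $O(n^{-1/4})$ in the second identity (your announced $O(n^{-1/2})$ from edge count alone is not what the analysis supports in general). Your vertex-collision estimate comparing $\sum_k N_k^2$ with $\sum_k N_k^3$ is right in spirit but is stated only ``typically''; turning it into a bound on expectations again points back to the generating-function strata.
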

Our proof of Lemma~\ref{lemma:enumskeleton} follows the method introduced in
\cite{ChMaSc}, where it was applied directly to labelled schemes (instead of
considering trees and their skeleton). We need first another lemma:
\begin{lemma}
\label{lemma:treewalks}
For all $i\geq 0$, let $\mathfrak{T}_{i}$ be the set of triple
$(\mathfrak{t},\nu,l)$ such that $(\mathfrak{t},l)$ is a rooted labelled tree,
 $(\mathfrak{t},\nu)\in\mathfrak{T}$, and
$l(\nu)=i$.
Then the generating series 
$\displaystyle
N_i(z)=\sum_{(\mathfrak{t},\nu,l)\in\mathfrak{T}_{i}}z^{|\mathfrak{t}|}$
satisfies: 
\begin{eqnarray}
\label{eq:Mi}
N_{i}(z) = (B(z)-\mathbbm{1}_{i=0}) \left[U(z)\right]^{i}
\end{eqnarray}
where $B$ and $U$ are two algebraic series of radius of convergence $1/12$, with
singular expansion at that point:
\begin{itemize}
  \item $B(z)= C_1(1-12z)^{-1/4}  + O(1)$  
  \item $U(z)= 1-C_2(1-12z)^{1/4} +O\left(\sqrt{1-12z}\right)$
\end{itemize}
for some constants $C_1,C_2>0$.
\end{lemma}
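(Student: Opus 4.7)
The plan is to decompose any $(\mathfrak{t},\nu,l)\in\mathfrak{T}_i$ along the simple path $p(\nu)=(v_0,v_1,\ldots,v_k)$ from the root $v_0$ to $\nu=v_k$; by the definition of $\mathfrak{T}$ this path starts with the root edge, so $k\geq 1$, and the sequence of labels along it is a $\{-1,0,+1\}$-walk of length $k$ from $0$ to $i$. Off the path, the tree consists of ``lateral'' sequences of subtrees hanging from each $v_j$: two sequences (one on each side of the path) when $1\leq j\leq k-1$, and a single sequence at $v_0$ (since the root edge is first among the children of the root) and at $v_k$ (which has no child on the path). Let $L(z)=\frac{1-\sqrt{1-12z}}{6z}$ denote the generating series of labelled rooted plane trees counted by edges, including the singleton: it satisfies $L=1+3zL^2$, and hence $1-3zL=1/L$, so that each lateral sequence of (edge, subtree) pairs contributes $\frac{1}{1-3zL(z)}=L(z)$.

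Summing over walks of length $k$, with $z^k$ accounting for the path edges and $L\cdot L^{2(k-1)}\cdot L=L^{2k}$ for the lateral contributions, one obtains
\begin{equation*}
N_i(z)=\sum_{k\geq 1}w_{i,k}\,\bigl(zL(z)^2\bigr)^k=W_i(zL(z)^2)-\mathbbm{1}_{i=0},
\end{equation*}
where $w_{i,k}$ counts $\{-1,0,+1\}$-walks from $0$ to $i$ in $k$ steps and $W_i(x)=\sum_{k\geq 0}w_{i,k}x^k$; the indicator removes the empty walk, which would correspond to $\nu=v_0$. The first-step decomposition of walks gives $W_i=\mathbbm{1}_{i=0}+x(W_{i-1}+W_i+W_{i+1})$. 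For $i\geq 1$ this is a linear recurrence whose characteristic polynomial $x\mu^2-(1-x)\mu+x$ has a unique root $U(x)$ that is a formal power series with $U(0)=0$; requiring $W_i$ to be a formal power series forces $W_i(x)=W_0(x)U(x)^i$. Using the symmetry $W_{-1}=W_1$ in the equation at $i=0$ yields $W_0(x)=1/(1-x(1+2U(x)))$. Setting $U(z):=U(zL(z)^2)$ and $B(z):=1/(1-zL(z)^2(1+2U(z)))$, one immediately gets the required identity
\begin{equation*}
N_i(z)=B(z)U(z)^i-\mathbbm{1}_{i=0}=\bigl(B(z)-\mathbbm{1}_{i=0}\bigr)U(z)^i.
\end{equation*}

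The last step, and the main technical point, is to check the singular expansions. Introducing the uniformizer $w=\sqrt{1-12z}$, one has $L=2/(1+w)$ and $zL^2=(1-w)/(3(1+w))$, which tends to $1/3$ as $w\to 0$. At $x=1/3$ the equation $xU^2-(1-x)U+x=0$ degenerates to $(U-1)^2=0$, forcing $U\to 1$; since $(1-x)^2-4x^2=(1-3x)(1+x)$ vanishes linearly at $x=1/3$, a direct expansion gives $U(x)=1-3\sqrt{\tfrac13-x}+O(\tfrac13-x)$. Substituting $\tfrac13-zL(z)^2=\frac{2w}{3(1+w)}$ then yields $U(z)=1-\sqrt6\,(1-12z)^{1/4}+O(\sqrt{1-12z})$. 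In the same way, $1-zL(z)^2(1+2U(z))$ vanishes like a positive constant times $w^{1/2}$, so $B(z)=C_1(1-12z)^{-1/4}+O(1)$ for some $C_1>0$. The algebraicity of $L$ and $U$ over $\mathbb{Q}(z)$ ensures these are valid singular expansions amenable to the transfer theorems of~\cite{FlOd}, completing the proof.
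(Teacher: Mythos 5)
Your proof is correct and follows essentially the same route as the paper's: both decompose the tree along the path from the root to $\nu$ into a Motzkin walk with labelled subtrees attached in its $2k$ corners, reducing $N_i$ to the walk generating series evaluated at $zC(z)^2$, and then carry out the same singular expansion (your constants $C_1=\sqrt{3}/(2\sqrt{2})$ and $C_2=\sqrt{6}$ match the paper's). The only divergence is in how the walk series is computed: you solve the linear recurrence $W_i=\mathbbm{1}_{i=0}+x(W_{i-1}+W_i+W_{i+1})$ by the characteristic-root/power-series argument, whereas the paper uses first-return and last-passage decompositions via the excursion series $E(t)$; the resulting $U$ and $B$ are identical.
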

\begin{proof}[Proof of Lemma~\ref{lemma:enumskeleton}]
We admit Lemma~\ref{lemma:treewalks}. Let $(\mathfrak{t}_n,v_*)$ be an element of $\mathcal{R}_{g,n}$, and 
let $M=\left|\{l(v_i)\:, \: 1\leq i \leq 3g \}\right|-1$. The \emph{labelled skeleton} of
$(\mathfrak{t}_n,v_*)$ is the pair $(\mathfrak{s},\lambda)$ where
$\mathfrak{s}$ is the (unlabelled) skeleton defined above, and $\lambda$ is the
the unique surjective application:
$\displaystyle
\{\mbox{vertices of }\mathfrak{s}\} \longrightarrow \llbracket 0, M\rrbracket
$
that satisfies:
\begin{eqnarray}
\label{eq:lambda}
l(v)> l(w) \Leftrightarrow
\lambda(v)> \lambda(w) \mbox{ and } l(v)=l(w) \Leftrightarrow\lambda(v)=
\lambda(w)
\end{eqnarray}
We let $\mathcal{Y}_k$ be the set of all pairs
$(\mathfrak{s},\lambda)$ such that $\mathfrak{s}$ has $k$ edges and there exists
an element of $\mathcal{R}_g$ of labelled skeleton $(\mathfrak{s},\lambda)$.
Oberve that $\mathcal{Y}_k$ is finite.

Now, let
$(\mathfrak{s},\lambda)$ be a labelled skeleton. 
A \emph{compatible labelling} of $(\mathfrak{s},\lambda)$ is an application 
$l: \{\mbox{vertices of }\mathfrak{s}\} \longrightarrow \mathbb{N}$ that
satisfies Equation~\ref{eq:lambda}. Observe that all such labellings are of the
form: $$
l(v) = \sum_{i=1}^{\lambda(v)} \delta_i
$$
for some $\delta\in\left(\mathbb{N}\setminus\{0\}\right)^M$. 

We will use again the double-rooting argument of subsection~\ref{subsec:scheme}.
We let $R_k(z)$ be the generating series of elements of $\mathcal{R}_g$ whose
skeleton has $k$ edges, so that $2kR_k(z)$ is the generating series of
elements of $\mathcal{R}_g$ whose skeleton has $k$ edges and carries an
additional distinguished oriented edge. Now, all such objects can be obtained in
a unique way as follows:%
\begin{itemize}
  \item first, choose a labelled skeleton
  $(\mathfrak{s},\lambda)\in\mathcal{Y}_k$
  \item then, choose a labelling $l$ compatible with $\lambda$ (equivalently,
  an element $\delta\in\left(\mathbb{N}\setminus\{0\}\right)^M$)
  \item for each edge $e$ of $\mathfrak{s}$, let $l(e_+)\geq l(e_-)$
  denote the labels of its two extremities. Choose a rooted labelled plane
  tree $\mathfrak{t}_e\in\mathfrak{T}_{l(e_+)-l(e_-)}$.
  Shift the labels of that tree by the quantity $l(e_-)$, so that the root edge
  (resp. the marked vertex) of $\mathfrak{t}_e$ has label $l(e_-)$ (resp. $l(e_+)$).
  \item replace each edge $e$ by the associated tree $\mathfrak{t}_e$, with
  the convention of Figure~\ref{fig:supertree}.
  \item shift all labels in order that the root vertex has label $0$.
  \item distinguish an oriented edge as the root of the map. The distinguished
  oriented edge of  its skeleton is given by the root of $\mathfrak{s}$.
\end{itemize}
For each $(\mathfrak{s,\lambda})\in\mathcal{Y}_k$, we set
$E^{\mathfrak{s,\lambda}}_{\neq}=\{\mbox{edges of }\mathfrak{s},\:l(e_-)\neq
l(e_+)\}$, $E^{\mathfrak{s,\lambda}}_{=}=\{\mbox{edges of
}\mathfrak{s},\:l(e_-)= l(e_+)\}$, and
$E^{\mathfrak{s,\lambda}}=E^{\mathfrak{s,\lambda}}_{\neq}\cup
E^{\mathfrak{s,\lambda}}_=$.
From the construction above, the generating series $2k\cdot R_k(z)$ can be
written:
\begin{eqnarray*}
 2k\cdot R_k(z)
&=& 
2\cdot\frac{zd}{dz}\left(
\sum_{(\mathfrak{s},\lambda)\in\mathcal{Y}_k}
\sum_{~l
{\mbox{ \scriptsize compatible }}} \prod_{e\in E^{\mathfrak{s,\lambda}}} 
N_{l(e_+)-l(e_-)}(z) \right)
\\ 
&=& 
2\cdot\frac{zd}{dz}\left(
\sum_{(\mathfrak{s},\lambda)\in\mathcal{Y}_k}
\sum_{~l {\mbox{ \scriptsize compatible }}} \prod_{e\in
E^{\mathfrak{s,\lambda}}_{\neq}} \left(B(z) U(z)^{l(e_+)-l(e_-)} \right)
\prod_{e\in E^{\mathfrak{s,\lambda}}_{=}} \left(B(z)
-1 \right) \right)
\\ 
&=& 
2\cdot\frac{zd}{dz}\left(
\sum_{(\mathfrak{s},\lambda)\in\mathcal{Y}_k}
B(z)^{|E^{\mathfrak{s,\lambda}}_{\neq}|}(B(z)-1)^{|E^{\mathfrak{s,\lambda}}_{=}|}
\sum_{\delta_1,\ldots\delta_{M}>0} U(z)^{
\delta_{\lambda(e_-)+1}+\ldots+\delta_{\lambda(e_+)}}\right)
\\ &=& 
2\cdot\frac{zd}{dz}\left(
\sum_{(\mathfrak{s},\lambda)\in\mathcal{Y}_k}
B(z)^{|E^{\mathfrak{s,\lambda}}_{\neq}|}(B(z)-1)^{|E^{\mathfrak{s,\lambda}}_{=}|}
\prod_{i=1}^{M}
\frac{U(z)^{d_i^{\mathfrak{s,\lambda}}}}{1-U(z)^{d_i^{\mathfrak{s,\lambda}}}}\right)
\end{eqnarray*}
where 
$\displaystyle d_i^{\mathfrak{s,\lambda}} = |\{e \mbox{ edge of }\mathfrak{s} :
\lambda(e_-)<i\leq \lambda(e_+)\}|$. 
 Observe that for all $i$, $d_i^{\mathfrak{s,\lambda}}$ is
positive, which implies finally that $R_k(z)$ has singular expansion:
\begin{eqnarray}
\label{eq:singularskeleton}
R_k(z) =& & \frac{1}{k}\sum_{(\mathfrak{s},\lambda)\in\mathcal{Y}_k}
-\frac{k+M}{2}
\left(\prod_i\frac{1}{d_i^{\mathfrak{s,\lambda}}}\right) C_1^k C_2^M
(1-4z)^{-\frac{k+M}{4}-1} \\
& & +O\left( (1-4z)^{-\frac{k+M}{4}-\frac{3}{4}}\right)
\end{eqnarray}
The greatest contribution is therefore realized by elements of
$\mathcal{R}_g$ whose skeleton maximizes the quantity $k+M$.
Now, the maximal value of $k$ is $6g-3$: it is reached when the skeleton is a
tree in which all the  $v_i$'s  are distinct and have degree $1$, while the
other vertices have degree $3$. The maximal value of $M+1$ is $4g-2$, and is reached when all
the labels are distinct (the $3g-2$ labels of the inner nodes, plus the $g$
labels of the marked vertices). Hence the maximal value of $k+M$ is $10g-6$, which corresponds to a critical exponent 
$\frac{1}{2}-\frac{5}{2}g$.

One the one hand, this implies with transfer theorems (\cite{FlOd}) that:
$$\left|\mathcal{R}_{g,n}\right|=c_g n^{\frac{5g-3}{2}}12^n
\left(1+O\left(n^{-1/4}\right)\right),
\mbox{where }c_g =
\frac{C_1^{6g-1}C_2^{4g-1}}{\Gamma\left(\frac{5g-1}{2}\right)}
\sum_{\mathfrak{s},\lambda}\frac{1}{|\mathfrak{s}|}\prod_i\frac{1}{d_i}
$$
and the sum is taken over those $(\mathfrak{s},\lambda)$ for which $k+M=10g-6$.

One the other hand, the dominating terms exactly correspond to the case where
$\{v_1,\ldots,v_{3g}\}$ has cardinality $3g$ and is non-singular. Up to
forgetting the order of the vertices of each triple
$\{v_{3i-2},v_{3i-1},v_{3i}\}$ (which induces a factor $\frac{1}{(3!)^g}$)
these are the elements of $\mathcal{W}_g$. Since the asymptotic expansions
involve only exponents which are multiple of $\frac{1}{4}$, the second statement of the lemma follows by transfer theorems.
\end{proof} 
\begin{proof}[Proof of Lemma~\ref{lemma:treewalks}]
A \emph{Motzkin walk of increment $i$} is a finite walk on the integers, 
starting at $0$, having steps in $\{-1,0,+1\}$ and ending at position $i$.
All elements of $\mathfrak{T}_i$ can be constructed in a unique way as follows.
First, choose a Motzkin walk of increment $i$ (if $i=0$ it has to have positive
length). If this walk has $m$ steps, draw a chain of $m+1$ vertices linked
by $m$ edges, and assign to the $j$-th vertex of the chain the $j$-th label of
the walk. Finally, attach one planar labelled tree on each of the $2m$ corners
of this walks. The first edge (resp. the last vertex) of the chain gives the
root (resp. the marked vertex) of the obtained tree. Hence, if $M_i(t)$ is the
generating series of Motzkin walks of increment $i$, one has:
$N_i(z)=M_i(zC(z)^2)$ where $C(z)=\frac{1-\sqrt{1-12z}}{6z}$ is the generating series of rooted
labelled trees.

An \emph{excursion} is a Motzkin walk of increment $0$ that takes its values in
$\mathbb{N}$. Thanks to a decomposition at the first return to $0$, the
generating series $E(t)$ of excursions satisfies: 
$\displaystyle
E(t)=1+t E(t)+t^2E(t)^2
$.
Moreover, decomposing a walk at its passages at $0$, one sees that the series
$M_0$ is related to $E$ by: $\displaystyle
M_0(t)= \frac{1}{1-t-2t^2E(t)}.
$
Then, $M_i$ is easily computed thanks to a last passage
decomposition:
$$
M_i(t)=M_0(t)[tE(t)]^i
$$
Hence we have proved Equation~\ref{eq:Mi}, with: 
$t(z)=zC(z)^2$, $B(z)=M_0(t(z))$, and $U(z)=t(z)E(t(z))$. 
Observe the term $-\mathbbm{1}_{i=0}$, which we need to exclude the case of the
empty walk, which is counted in the series $M_0$ but is irrelevant in our
decomposition, since in an element of $\mathfrak{T}$ the marked vertex and the
root cannot coincide. Now, a computation gives: $$
U(t)=\frac{1-t-\sqrt{(t+1)(1-3t)}}{2t}
\mbox{ and }
M_0 = [(t+1)(1-3t)]^{-1/2}
$$
Finally, we have $1-3t(z)=2\sqrt{1-12z}+O(1-12z)$, which
ends the proof of the lemma, giving $C_1=\frac{\sqrt{3}}{2\sqrt{2}}$ and
$C_2=\sqrt{6}.$
\end{proof}

Recall that we have from Corollary~\ref{cor:bijlabelled}:
$\displaystyle
\left|\mathcal{L}^*_{g,n}\right|=\frac{1}{2^gg!}\left|\mathcal{W}_{g,n}\right|
$
so that the lemma implies:
$$
\left|\mathcal{L}^*_{g,n}\right|=\frac{c_g}{12^gg!}n^{\frac{5g-3}{2}}12^n
\left(1+O\left(n^{-1/4}\right)\right) $$

We now express the constant $c_g$ in terms of labelled trees.
Let $\mathfrak{t_n}$ be a random rooted labelled plane tree with $n$ edges
chosen uniformly at random, and let $v_1,v_2\ldots,v_{3g}$ be $3g$ vertices of
$\mathfrak{t}_n$ chosen independently and uniformly at random.
Then one has by definition of the uniform probability:
\begin{eqnarray*}
\mathbb{P}\Big(
\forall i\in\llbracket 1,g \rrbracket :
l(v_{3i-2})=l(v_{3i-1})=l(v_{3i})
\Big)&=& \left|\mathcal{R}_{g,n}\right|
\left[\frac{3^{n}}{n+1}{2n \choose n}\cdot n^{3g}
\right]^{-1}\\
&=& c_g \sqrt{\pi} \cdot  n^{-g/2} \left(1+O\left(n^{-1/4}\right)\right)
\end{eqnarray*}

This gives our second Theorem, linking the volume constant $t_g$ to random
trees:
\begin{theorem}
\label{thm:proba}
Let $\mathfrak{t_n}$ be a random rooted labelled plane tree with $n$ edges
chosen uniformly at random, and let $v_1,v_2\ldots,v_{3g}$ be $3g$ vertices of
$\mathfrak{t}_n$ chosen independently and uniformly at random.
Then we have\begin{eqnarray}
t_g = \frac{2}{12^gg!\sqrt{\pi}} \lim_{n\rightarrow\infty} 
n^{g/2}\mathbb{P}\Big(
\forall i\in\llbracket 1,g \rrbracket :
l(v_{3i-2})=l(v_{3i-1})=l(v_{3i})
\Big)
\end{eqnarray}
where $t_g$ is the volume constant defined by Equation~\ref{eq:deftg}.
\end{theorem}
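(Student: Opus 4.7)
The theorem is essentially a direct consequence of the enumerative machinery already developed; the plan is to chain together the asymptotic identities and identify the constant $c_g$ of Lemma~\ref{lemma:enumskeleton} with a multiple of $t_g$. I would split the argument into two independent computations, the first determining the asymptotic order of the probability in terms of $c_g$, and the second pinning down $c_g$ via the Marcus-Schaeffer bijection.

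For the first computation, by definition of the uniform probability and independence of the choice of the marked vertices,
\begin{equation*}
\mathbb{P}\big(\forall i,\; l(v_{3i-2})=l(v_{3i-1})=l(v_{3i})\big) \;=\; \frac{|\mathcal{R}_{g,n}|}{\tfrac{3^n}{n+1}\binom{2n}{n}\,(n+1)^{3g}},
\end{equation*}
since the denominator counts labelled rooted plane trees with $n$ edges together with an ordered $3g$-tuple of vertices. Stirling's formula gives the denominator as $\frac{12^n}{\sqrt{\pi}}\,n^{3g-3/2}(1+O(1/n))$, and combining with the estimate $|\mathcal{R}_{g,n}|\sim c_g n^{(5g-3)/2}12^n$ from Lemma~\ref{lemma:enumskeleton} yields
\begin{equation*}
n^{g/2}\,\mathbb{P}(\,\cdots\,) \;\longrightarrow\; c_g\sqrt{\pi}.
\end{equation*}

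For the second computation, I would identify $c_g$ by matching two asymptotic expressions for $|\mathcal{W}_{g,n}|$. On the one hand, the second statement of Lemma~\ref{lemma:enumskeleton} gives $|\mathcal{W}_{g,n}|\sim 6^{-g}\,c_g\,n^{(5g-3)/2}12^n$. On the other hand, applying Corollary~\ref{cor:bijlabelled}, then the dominance bound~\eqref{eq:dominant}, then the factor $2$ coming from the Marcus-Schaeffer bijection $\mathcal{Q}^\bullet_{g,n}\leftrightarrow\{0,1\}\times\mathcal{L}_{g,n}$, and finally the known estimate $|\mathcal{Q}^\bullet_{g,n}|\sim t_g n^{(5g-3)/2}12^n$, one obtains
\begin{equation*}
|\mathcal{W}_{g,n}| \;=\; 2^g g!\,|\mathcal{L}^*_{g,n}| \;\sim\; 2^g g!\cdot\tfrac{1}{2}\,|\mathcal{Q}^\bullet_{g,n}| \;\sim\; 2^{g-1}g!\,t_g\,n^{(5g-3)/2}12^n.
\end{equation*}
Comparing the two expressions forces $c_g = \tfrac{1}{2}\,12^g\,g!\,t_g$. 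Substituting into the limit of the first step yields $n^{g/2}\mathbb{P}(\cdots)\to \tfrac{1}{2}\,12^g g!\sqrt{\pi}\,t_g$, which rearranges to the claimed identity.

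There is no serious obstacle: all the bijective and analytic content lives in the earlier lemmas, and what remains is bookkeeping. The only delicate point is to ensure that the various $O(n^{-1/4})$ and $O(n^{-1/2})$ error terms arising from \eqref{eq:dominant}, Corollary~\ref{cor:bijlabelled}, and Lemma~\ref{lemma:enumskeleton} are all uniform in $n$ and concern asymptotic equivalents of the same order $n^{(5g-3)/2}12^n$, so they vanish in the limit and do not pollute the final constant.
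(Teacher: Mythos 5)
Your proposal is correct and follows essentially the same route as the paper: the probability is computed as $|\mathcal{R}_{g,n}|$ divided by the total count of trees with an ordered $3g$-tuple of vertices, and the constant $c_g$ is identified as $\tfrac{1}{2}\,12^g g!\,t_g$ by chaining Lemma~\ref{lemma:enumskeleton}, Corollary~\ref{cor:bijlabelled}, Equation~\ref{eq:dominant} and the Marcus--Schaeffer factor of $2$ --- exactly the bookkeeping the paper performs (if somewhat more tersely) in the paragraphs preceding the theorem. The only cosmetic difference is that you write the denominator with $(n+1)^{3g}$ where the paper writes $n^{3g}$; both give the same asymptotics, so nothing changes.
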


\subsection{Expressing the asymptotic number of maps in terms of ISE.}

We now relate the constant $t_g$ to the random measure ISE, which is
well-known to appear as a natural limit of many classes of random labelled trees
models. The ISE (for Integrated Superbrownian Excursion) 
is introduced in~\cite{Aldous-treebasedmodels}. We follow the notation in
\cite{MBM-Janson}.

Let $\mathfrak{t}_n$ be a labelled tree with $n$ edges chosen
uniformly at random. For all $k\in \mathbb{Z}$ we note $X_n(k)$ the 
(random) number of nodes of $\mathfrak{t}_n$ of label $k$.
Now, let as before $v_1,v_2\ldots,v_{3g}$ be $3g$ vertices of
$\mathfrak{t}_n$ chosen independently and uniformly at random.
We have:
\begin{eqnarray}\label{eq:prob1}
&&\mathbb{P}\Big(
\forall i\in\llbracket 1,g \rrbracket :
l(v_{3i-2})=l(v_{3i-1})=l(v_{3i})
\Big)\\
&=&\mathbb{E}\left[\mathbb{P}\Big(
\forall i\in\llbracket 1,g \rrbracket :
l(v_{3i-2})=l(v_{3i-1})=l(v_{3i})
 \:\Big|\: \mathfrak{t}_n
\Big)\right] \\
&=&\mathbb{E}\left[\left(
\frac{\sum_{k\in \mathbb{Z}} X_n(k)^3}{(n+1)^{3}} \right)^g\right]\label{eq:prob3}
\label{eq:probasomme}
\end{eqnarray}

We let $f_n:=\displaystyle\left(\frac{X_n\left(\left[\gamma^{-1}n^{1/4}x\right]\right)}{\gamma n^{3/4}}\right)_{x\in \mathbb{R}}$, where $[\cdot]$ denotes the integer part, and where $\gamma=2^{-1/4}3^{1/2}$. The function $f_n$ is an element of the set $\mathcal{D}_0(\mathbb{R})$ of \emph{c\`adl\`ag functions} on $\mathbb{R}$ (i.e. right-continuous functions with left-hand limits) which tend to $0$ at $\pm \infty$. We equip the set $\mathcal{D}_0(\mathbb{R})$ with the topology of the \emph{uniform norm}, denoted $\|. \|$.
The following theorem is due to Bousquet-M\'elou and Janson:
\begin{othertheorem}[\cite{MBM-Janson}]
Let $\mu_{{\scriptscriptstyle\mathrm{ISE}}}$ be the 1-dimensional ISE
measure. Then $\mu_{{\scriptscriptstyle\mathrm{ISE}}}$ has almost surely a continuous density 
$f_{{\scriptscriptstyle\mathrm{ISE}}}(x)$. Moreover one has
when $n$ tends to infinity:
\begin{eqnarray}
\label{eq:convISE}
f_n(x) \longrightarrow
f_{{\scriptscriptstyle\mathrm{ISE}}}(x)
\end{eqnarray}
in the sense of weak convergence in the space
$\mathcal{D}_0\left(\mathbb{R}\right)$ equipped with the uniform topology.
\end{othertheorem}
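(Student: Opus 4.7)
The plan is to prove the theorem by combining Aldous's invariance principle for the contour function of a uniform random plane tree with a functional invariance principle for the labels indexed by the tree, and then to transfer process-level convergence to the profile via a continuous-mapping argument, before finally upgrading to the uniform topology on $\mathcal{D}_0(\mathbb{R})$.

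First I would set up the encoding. A uniform rooted labelled plane tree $\mathfrak{t}_n$ is encoded by a pair $(C_n, L_n)$, where $C_n: \llbracket 0, 2n\rrbracket \to \mathbb{Z}_{\geq 0}$ is its contour function and $L_n$ is the sequence of labels read along the contour. Conditionally on $\mathfrak{t}_n$, the sequence $L_n$ is a walk whose increments are i.i.d.\ uniform on $\{-1,0,+1\}$ (with variance $2/3$) over each step of $C_n$. By Aldous's theorem, the rescaled contour $(C_n(2nt)/\sqrt{2n})_{t\in[0,1]}$ converges in distribution, in $\mathcal{C}([0,1])$, to the normalized Brownian excursion $e$. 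A routine conditional Donsker argument (essentially the Marckert--Mokkadem / Chassaing--Schaeffer head-of-snake invariance principle) then gives the joint convergence
\begin{equation*}
\left(\tfrac{C_n(2nt)}{\sqrt{2n}},\,\tfrac{L_n(2nt)}{\gamma\, n^{1/4}}\right)_{t\in[0,1]}\,\Longrightarrow\,(e(t), Z_t)_{t\in[0,1]},
\end{equation*}
with the normalizing constant $\gamma=2^{-1/4}3^{1/2}$ chosen so that the label variance per unit of tree-height matches that of the head of the Brownian snake driven by $e$.

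Next I would deduce convergence of the profile. The rescaled profile $f_n$ is, up to constants, the density of the occupation measure of the rescaled label process along the contour: for every bounded continuous $\phi$,
\begin{equation*}
\int_{\mathbb{R}} \phi(x)\, f_n(x)\,dx \;=\; \frac{1}{2n}\sum_{k=0}^{2n-1} \phi\!\left(\tfrac{L_n(k)}{\gamma\, n^{1/4}}\right) + o(1),
\end{equation*}
and by the continuous-mapping theorem the right-hand side converges in distribution to $\int_0^1 \phi(Z_t)\,dt = \int_{\mathbb{R}} \phi\,d\mu_{\mathrm{ISE}}$. This already gives weak convergence of the random measures $f_n(x)\,dx$ to $\mu_{\mathrm{ISE}}$. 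The existence of a continuous density for $\mu_{\mathrm{ISE}}$ then has to be established independently, for example by showing that the second-moment integrals $\mathbb{E}\!\left[\int f_n^2\right]$ stay bounded along the sequence (via a four-vertex label-coincidence computation analogous to the three-vertex one used for Theorem~\ref{thm:proba}), and passing to the limit.

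The hard part, and the only delicate step beyond standard invariance principles, is upgrading this to weak convergence in $(\mathcal{D}_0(\mathbb{R}),\|\cdot\|)$. Two ingredients are needed: (i) a \emph{tail estimate} showing that $\sup_{|x|\geq A}f_n(x)$ is negligible uniformly in $n$, which follows from exponential tail bounds on $\max_{v}|l(v)|/n^{1/4}$ (standard for snake-indexed walks, e.g.\ via Chernoff on the height); and (ii) a \emph{modulus-of-continuity} estimate, i.e.\ bounds of the form $\mathbb{E}\!\left[|X_n(k)-X_n(k')|^{2p}\right]\leq C_p\,|k-k'|^{p\alpha}n^{(3/4)(2p)-p\alpha}$ for some $\alpha>0$, allowing a Kolmogorov-type criterion to force equicontinuity of the rescaled $f_n$. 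I would expect this second estimate to be the main obstacle: one obtains it by bounding label-coincidence probabilities between a collection of uniform random vertices via the same skeleton-decomposition method used in the proof of Lemma~\ref{lemma:enumskeleton}, noting that the extra power of $|k-k'|$ comes from forcing the skeleton to contain vertices with prescribed label separation. Once tightness in $\mathcal{D}_0(\mathbb{R})$ is secured, the already-established weak convergence of integrals $\int \phi\, f_n$ identifies the limit as the continuous density $f_{\mathrm{ISE}}$, completing the proof.
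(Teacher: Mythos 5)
This theorem is not proved in the paper: it is quoted from \cite{MBM-Janson}, and the only argument the paper supplies is the remark immediately following the statement, which reduces the step-function version $f_n$ stated here to the piecewise-affine interpolation $g_n$ for which the convergence is actually established in \cite{MBM-Janson}, via the bound $\|f_n-g_n\|\leq w_{g_n}(\gamma n^{-1/4})$ together with the almost sure compactness of the support of $f_{\ISE}$. You are instead sketching a proof of the cited result itself. The architecture you describe (contour-plus-label invariance principle, convergence of the occupation measure, then a tightness argument for the densities) is the right one and is close in spirit to the snake literature, so the comparison of routes is simply that the paper buys the statement for the price of a two-line interpolation estimate while you are re-deriving the hard analytic content of \cite{MBM-Janson}.

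Taken as a proof, however, your sketch has genuine gaps exactly at the step you flag as delicate. First, bounding $\mathbb{E}\left[\int f_n^2\right]$ does not establish that $\mu_{\ISE}$ has a \emph{continuous} density; it only yields an $L^2$ density, so your proposed ``independent'' argument for continuity is subsumed by (and cannot replace) the Kolmogorov-type modulus estimate of your point (ii). Second, that estimate $\mathbb{E}\left[|X_n(k)-X_n(k')|^{2p}\right]\leq C_p|k-k'|^{p\alpha}n^{3p/2-p\alpha}$ is precisely where all the work lies, and asserting that it follows from the skeleton decomposition is not a proof: the method of Lemma~\ref{lemma:enumskeleton} controls label-coincidence probabilities for \emph{uniformly random} vertices, and converting that into a bound on the centered differences $X_n(k)-X_n(k')$ carrying a positive power of $|k-k'|$ is a separate, nontrivial computation (in \cite{MBM-Janson} it is done by explicit generating-function estimates). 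Finally, weak convergence in $\left(\mathcal{D}_0(\mathbb{R}),\|\cdot\|\right)$ requires addressing the non-separability of this space; one must observe, as the paper does in a later footnote, that the limit law is carried by the separable subspace of continuous compactly supported functions. As it stands, the proposal is a plausible roadmap rather than a proof, and for the purposes of this paper the statement should simply be cited.
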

\noindent {\bf Remark.} In the paper \cite{MBM-Janson}, the theorem is not stated exactly in this form. Precisely, it is shown that 
$g_n(x) \rightarrow f_{{\scriptscriptstyle\mathrm{ISE}}}(x)$, where $g_n$ is the affine by parts process that coincides with $f_n$ at each point $x$ of the form $\frac{j}{\gamma^{-1}n^{1/4}}$ for integer $j$, and which is affine 
on the intervals between these points. 
But since
$\|f_n-g_n\|\leq w_{g_n}(\gamma n^{-1/4})$, where $w_{g_n}$ is the modulus of continuity of $g_n$, and since it is shown in \cite{MBM-Janson} that $f_{{\scriptscriptstyle\mathrm{ISE}}}$ has almost surely compact support, the theorem, as stated here, follows easily.


We can now express the constant $t_g$ as a functional of ISE:
\begin{theorem}
\label{thm:ISE}
The constant $t_g$ can be expressed as follows:
$$
t_g =
\frac{2}{2^{5g/2}g!\sqrt{\pi}}\mathbb{E}\left[\left(\int_{-\infty}^\infty
f_{{\scriptscriptstyle\mathrm{ISE}}}(x)^3dx\right)^g\right] $$
\end{theorem}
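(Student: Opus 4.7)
The plan is to combine Theorem~\ref{thm:proba}, the expression~(\ref{eq:probasomme}) of the coincidence probability in terms of the label profile $X_n$, and the Bousquet-M\'elou--Janson convergence $f_n\to f_{\ISE}$. First I would rewrite the probability in terms of the rescaled profile. Since $f_n$ is piecewise constant, equal to $X_n(k)/(\gamma n^{3/4})$ on an interval of length $\gamma n^{-1/4}$, a direct Riemann-sum computation gives
\[
\int_{\mathbb{R}} f_n(x)^3\,dx \;=\; \sum_{k\in\mathbb{Z}} \frac{X_n(k)^3}{\gamma^3 n^{9/4}}\cdot\frac{\gamma}{n^{1/4}} \;=\; \frac{1}{\gamma^2 n^{5/2}}\sum_{k\in\mathbb{Z}} X_n(k)^3.
\]
Substituting this in~(\ref{eq:probasomme}) yields
\[
n^{g/2}\,\mathbb{P}\bigl(l(v_{3i-2})=l(v_{3i-1})=l(v_{3i}),\ \forall i\bigr) \;=\; \gamma^{2g}\Bigl(\tfrac{n}{n+1}\Bigr)^{3g}\mathbb{E}\!\left[\left(\int_{\mathbb{R}} f_n(x)^3\,dx\right)^{\!g}\right].
\]

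Next I would pass to the limit in the right-hand side. By the Bousquet-M\'elou--Janson theorem, $f_n\to f_{\ISE}$ weakly in $\mathcal{D}_0(\mathbb{R})$ for the sup-norm, with $f_{\ISE}$ almost surely continuous and of compact support. Note that $\int f_n\,dx = (n+1)/n$ by construction, so the contribution to $\int f_n^3$ coming from outside the support of $f_{\ISE}$ is bounded by $\|f_n-f_{\ISE}\|_\infty^2 \int|f_n|\,dx$ and hence vanishes, while on any compact containing the support of $f_{\ISE}$, uniform convergence yields convergence of the integrals of $f_n^3$. The continuous mapping theorem thus gives $\int f_n^3 \to \int f_{\ISE}^3$ in distribution.

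The main obstacle is to upgrade this distributional convergence to convergence of the $g$-th moment, which requires uniform integrability of $\bigl(\int f_n^3\bigr)^g$. I would obtain this by proving a uniform $L^{g+1}$ bound: applying~(\ref{eq:probasomme}) with $3(g+1)$ marked vertices in place of $3g$, one gets
\[
\mathbb{E}\!\left[\left(\int f_n^3\right)^{\!g+1}\right] \;=\; \gamma^{-2(g+1)}\,\frac{(n+1)^{3(g+1)}}{n^{5(g+1)/2}}\cdot\mathbb{P}\bigl(\text{all $g+1$ triples coincide}\bigr).
\]
The enumerative argument of Lemma~\ref{lemma:enumskeleton}, applied with $3(g+1)$ marked vertices instead of $3g$, shows that the probability on the right is $O(n^{-(g+1)/2})$, so that the expectation on the left is uniformly bounded in $n$. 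This provides the required uniform integrability.

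Combining everything, and using $\gamma^{2g} = (2^{-1/2}\cdot 3)^g = 3^g/2^{g/2}$, Theorem~\ref{thm:proba} gives
\[
t_g \;=\; \frac{2}{12^g g!\sqrt{\pi}}\cdot \gamma^{2g}\,\mathbb{E}\!\left[\left(\int f_{\ISE}^3\right)^{\!g}\right] \;=\; \frac{2}{2^{5g/2} g!\sqrt{\pi}}\,\mathbb{E}\!\left[\left(\int_{-\infty}^{\infty} f_{\ISE}(x)^3\,dx\right)^{\!g}\right],
\]
which is the formula of Theorem~\ref{thm:ISE}.
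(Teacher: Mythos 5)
Your proposal is correct and follows essentially the same route as the paper: rewrite the coincidence probability as $\gamma^{2g}\bigl(\tfrac{n}{n+1}\bigr)^{3g}\mathbb{E}[W_n^g]$ with $W_n=\int f_n^3$, deduce convergence in distribution of $W_n$ to $W=\int f_{\ISE}^3$ from the Bousquet-M\'elou--Janson theorem, and upgrade to convergence of the $g$-th moment via uniform integrability supplied by $\sup_n\mathbb{E}[W_n^{g+1}]<\infty$, which is precisely the paper's Equation~\ref{eq:tgintermediaire} (equivalently Lemma~\ref{lemma:enumskeleton}) applied at order $g+1$. The only minor divergence is the tail control in the distributional convergence step: you exploit the mass normalization $\int f_n=(n+1)/n$ (which implicitly requires a Skorokhod coupling to make $\|f_n-f_{\ISE}\|_\infty$ meaningful, legitimate here since the limit law is carried by the separable space of continuous compactly supported functions), whereas the paper truncates the integral to $[-m,m]$ and invokes the tightness of the supports from \cite{ChassaingSc}; both devices work.
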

We let $\displaystyle
W_n=\int_{-\infty}^\infty f_n(x)^3dx$ and
$\displaystyle W=\int_{-\infty}^\infty f_{{\scriptscriptstyle\mathrm{ISE}}}(x)^3dx$. Observe that we have 
$W_n=\frac{1}{\gamma^2 n^{5/2}}\sum_{k\in \mathbb{Z}}X_n(k)^3$, so we obtain from Theorem~\ref{thm:proba} and  Equations~\ref{eq:prob1}--~\ref{eq:prob3}:
\begin{eqnarray}\label{eq:tgintermediaire}
t_g= \frac{2 \cdot\gamma^{2g}}{12^gg!\sqrt{\pi}} \lim_{n\rightarrow \infty}  \mathbb{E}\left[W_n^g\right],
\end{eqnarray}
which, in passing, proves that the limit exists and is finite. 
Theorem~\ref{thm:ISE} is therefore a direct consequence of the following lemma:
\begin{lemma}
\label{lemma:moments}
For all $g\in\mathbb{N}$, one has when $n$ tends to infinity:
\begin{eqnarray*}
 && \mathbb{E}\left[W_n^g\right] \longrightarrow 
 \mathbb{E}\left[W^g\right] < \infty
\end{eqnarray*}
\end{lemma}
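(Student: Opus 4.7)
\medskip
\noindent\textbf{Proof plan.} The plan is to combine (a) weak convergence $W_n \Rightarrow W$ with (b) uniform integrability of the family $\{W_n^g\}$, and then apply Vitali's theorem.

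\smallskip
\noindent\emph{Step 1: convergence in distribution.} By the Skorokhod representation theorem, I would work on a probability space where the convergence $f_n \to f_{\ISE}$ in $\mathcal{D}_0(\mathbb{R})$ (uniform topology) holds almost surely, and show that, on this space, $W_n \to W$ almost surely. Fix a realization for which $f_n \to f_{\ISE}$ uniformly; since $f_{\ISE}$ has a.s. compact support, pick a compact interval $K$ containing $\mathrm{supp}(f_{\ISE})$. Split
\begin{equation*}
W_n = \int_K f_n^3\,dx + \int_{K^c} f_n^3\,dx.
\end{equation*}
Uniform convergence on $K$ (which has finite Lebesgue measure) gives $\int_K f_n^3\,dx \to \int_K f_{\ISE}^3\,dx = W$. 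On $K^c$ we have $f_{\ISE}\equiv 0$, so $f_n \leq \epsilon_n := \|f_n - f_{\ISE}\|_\infty$ pointwise, whence
\begin{equation*}
\int_{K^c} f_n^3\,dx \;\leq\; \epsilon_n^2 \int_{K^c} f_n\,dx \;\leq\; \epsilon_n^2 \int_{\mathbb{R}} f_n\,dx \;=\; \epsilon_n^2 \cdot \tfrac{n+1}{n} \;\longrightarrow\; 0,
\end{equation*}
using that $f_n$ integrates to $(n+1)/n$ by definition. Hence $W_n \to W$ almost surely under the Skorokhod coupling, so $W_n \Rightarrow W$.

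\smallskip
\noindent\emph{Step 2: uniform integrability.} It suffices to show that $\sup_n \mathbb{E}[W_n^{g+1}] < \infty$. But Theorem~\ref{thm:proba}, together with Equation~\ref{eq:tgintermediaire}, is proved for \emph{every} positive integer exponent; applied with $g$ replaced by $g+1$ it yields that $\mathbb{E}[W_n^{g+1}]$ converges to the finite limit $\tfrac{12^{g+1}(g+1)!\sqrt{\pi}}{2\gamma^{2(g+1)}}t_{g+1}$, whose existence does \emph{not} rely on the present lemma. Thus $\sup_n \mathbb{E}[W_n^{g+1}] < \infty$, and $\{W_n^g\}$ is uniformly integrable.

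\smallskip
\noindent\emph{Step 3: conclusion.} Weak convergence $W_n^g \Rightarrow W^g$ (continuous mapping) combined with uniform integrability gives $\mathbb{E}[W_n^g] \to \mathbb{E}[W^g]$ by Vitali's theorem. Finiteness $\mathbb{E}[W^g] < \infty$ then follows either from Fatou's lemma applied along the Skorokhod coupling, or by observing that the limit coincides with $\lim_n \mathbb{E}[W_n^g]$, which is finite by the same combinatorial argument as in Step 2.

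\smallskip
\noindent\emph{Main obstacle.} The delicate point is the weak convergence: the functional $f \mapsto \int f^3$ is not continuous on $\mathcal{D}_0(\mathbb{R})$ with the uniform norm in general (a uniformly small function may have large $L^3$-mass if its support is very wide). The argument above overcomes this by exploiting two specific features of the present situation: $f_{\ISE}$ has compact support a.s., so the ``large $x$'' part of $f_n$ must be uniformly small; and the total mass $\int f_n\,dx$ stays bounded, which turns that uniform smallness into smallness of the $L^3$-integral.
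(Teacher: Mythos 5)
Your proof is correct, and its skeleton coincides with the paper's: both establish (a) weak convergence of $W_n$ to $W$ and (b) uniform integrability of $\{W_n^g\}$ from the bound $\sup_n\mathbb{E}[W_n^{g+1}]<\infty$, which both of you correctly source from Equation~\ref{eq:tgintermediaire} applied with exponent $g+1$ (whose finiteness comes from the combinatorial enumeration and is not circular). The interesting divergence is in step (a). The paper works with a bounded continuous test function $F$, truncates the integral to $[-m,m]$, and controls the two error terms $\mathbb{P}(\mathrm{supp}(f_n)\nsubseteq[-m,m])$ and $\mathbb{P}(\mathrm{supp}(f_{\ISE})\nsubseteq[-m,m])$ using the results of \cite{ChassaingSc} on the convergence of the support; it then invokes continuity of $g\mapsto\int_{-m}^m g^3$ for the uniform norm. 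You instead pass to a Skorokhod coupling and prove the stronger statement that $W_n\to W$ almost surely, killing the tail of the integral with the identity $\int_{\mathbb{R}}f_n=\tfrac{n+1}{n}$ combined with $f_n\le\|f_n-f_{\ISE}\|_\infty$ off a compact containing $\mathrm{supp}(f_{\ISE})$. This is a genuine simplification: it needs only that $f_{\ISE}$ has a.s.\ compact support, not the convergence of the support of $f_n$ itself, and it directly addresses the discontinuity of $f\mapsto\int f^3$ on $(\mathcal{D}_0,\|\cdot\|)$ that the paper handles by restricting to $[-m,m]$. Your step 3 (Vitali) is the packaged version of the paper's hands-on truncation with $W^g\wedge M$ and $\mathbbm{1}_{W\le L}$; same content, and your route avoids the paper's small detour about atoms of the law of $W$ at the truncation level $L$.

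One point to make explicit: the Skorokhod representation theorem needs care here because $\mathcal{D}_0(\mathbb{R})$ with the uniform norm is not separable; it applies because the limit law is supported on the separable subset of continuous compactly supported functions (the paper makes exactly this remark in a footnote to Lemma~\ref{lemma:probabilityspace}). With that sentence added, your argument is complete.
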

\begin{proof}[Proof of Lemma~\ref{lemma:moments}] \hspace{2cm}\\
$\bullet$
We first show that $W_n$ converges weakly to $W$. Indeed, if $F$ is a bounded continuous real function, one has for 
every $m>0$:
\begin{eqnarray*}
&&\left|\mathbb{E}\left[F(W_n)\right]-\mathbb{E}\left[F(W)\right]\right|\leq
\left|\mathbb{E}\left[F\left(\int_{-m}^m f_n(x)^3dx\right)\right]-
\mathbb{E}\left[F\left(\int_{-m}^m f_{{\scriptscriptstyle\mathrm{ISE}}}(x)^3dx\right)\right]\right|  \\
&& \hspace{1.5cm}+ 2\|F\| \cdot \mathbb{P}\left(\textrm{supp}(f)\nsubseteq[-m,m]\right)
+  2\|F\| \cdot \mathbb{P}\left(\textrm{supp}(f_n)\nsubseteq[-m,m]\right)
\end{eqnarray*}
where for any real function $g$, $\textrm{supp(g)}$ denotes the support of $g$.
Now, it follows from the results of \cite{ChassaingSc} on the convergence of the support of ISE that for all $\epsilon>0$, there exists $m>0$ such that the second and third terms are smaller than $\epsilon$ (for all $n$). Moreover, since the mapping $g\longrightarrow \int_{-m}^m g(x)^3 dx$ is continuous with respect to the uniform norm on $\mathcal{D}_0$, the weak convergence of $f_n$ to $f_{{\scriptscriptstyle\mathrm{ISE}}}$ (Theorem~\cite{MBM-Janson}) implies that for $n$ large enough, the first term is also smaller than $\epsilon$. Hence $\left|\mathbb{E}\left[F(W_n)\right]-\mathbb{E}\left[F(W)\right]\right|\leq 3 \epsilon$ for $n$ large enough, i.e. $W_n$ converges weakly to $W$. 

\noindent$\bullet$ Again, we fix $\epsilon>0$. We know from Equation~\ref{eq:tgintermediaire} that for all $K$, $\mathbb{E}\left[W_n^K\right]$ tends to a \emph{finite} value $w_K$ when $n$ tends to infinity. Now, we have:
\begin{eqnarray*}
\mathbb{E}\left[W^{g}\right] &=& 
\lim_M \mathbb{E}\left[W^{g}\wedge M\right]\ \mbox{ [monotone convergence]} \\
&=&\lim_M \lim_n\mathbb{E}\left[ W_n^{g}\wedge M\right]
\ \mbox{[}W_n\mbox{ converges weakly to }W\mbox{]}\\
&\leq& w_g < \infty.
\end{eqnarray*}
In particular, this implies that the quantity $\mathbb{E}\left[W^g\mathbbm{1}_{W>L}\right]$ tends to $0$ when $L$ tends to infinity. Moreover, we have from Chebichev inequality 
$\mathbb{E}\left[W_n^g\mathbbm{1}_{W_n>L}\right]\leq \frac{1}{L}\mathbb{E}\left[W_n^{g+1}\right]$ which, for fixed $L$,  is smaller than $\frac{1+w_{g+1}}{L}$ for $n$ large enough.

We now fix $L$ large enough so that both $\mathbb{E}\left[W^g\mathbbm{1}_{W>L}\right]$ and $\frac{1+w_{g+1}}{L}$ are smaller than $\epsilon$. Up to replacing $L$ by a greater value, we can assume that $L$ is not an atom of the law of $W$ (should there be any). Then the fact that $W_n$ converges weakly to $W$ implies that 
$\left|\mathbb{E}\left[W_n^g\mathbbm{1}_{W_n\leq L}\right]-\mathbb{E}\left[W^g\mathbbm{1}_{W\leq L}\right]\right|$ is smaller than $\epsilon$ for $n$ large enough. Putting the three terms together, we obtain that for $n$ large enough, 
$\left|\mathbb{E}\left[W_n^g\right]-\mathbb{E}\left[W^g\right]\right|$ is smaller than $3\epsilon$, and the lemma is proved.
\end{proof}

\section{Convergence of the profile}

\label{sec:profile}

A consequence of our bijection is an explicit characterisation of the limiting
profile and radius of bipartite quadrangulations of genus $g$ in terms of
ISE.

Fix $g\geq 1$. For $n\geq 0$, let $q_n$ be a rooted and pointed
bipartite quadrangulation of genus $g$ with $n$ faces, chosen uniformly at
random. For all $k$ and $n$, let $Y_n(k)$ be the number of vertices of $q_n$ at graph distance
$k$ from the pointed vertex. For all $n$, define the probability
measure:
$$
\mathfrak{p}_{q_n} = \frac{1}{n+2-2g}\sum_{k=0}^\infty  Y_n(k)
\delta_{\gamma n^{-1/4}k} $$
where $\delta_x$ is the Dirac measure at $x$. The probability measure $\mathfrak{p}_{q_n}$ is called the \emph{profile} of $q_n$. It is a random
variable with values in the space $\mathcal{M}_1$ of all probability measures
on $\mathbb{R}$. The space $\mathcal{M}_1$ is equipped with the topology of weak convergence. 

 The quantity 
$a_n=\max\{k, \: Y_n(k) \neq 0 \}$ is called the \emph{radius} of
$q_n$.

\subsection{Statement of the limit theorem.}

Let
$l$ and $r$ be, respectively, the left and right bounds of the support of ISE:
$$
[l,r]=\cap \{I,\:I \mbox{ interval such that } \mu_{\ISE}(I)=1\}.
$$
Recall that $l$ and $r$ are almost surely finite (\cite{MBM-Janson}).
The \emph{shifted ISE measure} is the probability measure $\overline\mu_\ISE$
defined (almost surely) by:
$$
\int_\mathbb{R} h(x)d\overline\mu_\ISE(x)=\int_\mathbb{R} h(x-l)d\mu_\ISE(x)
$$
for all bounded continuous $h:\mathbb{R}\rightarrow\mathbb{R}$.
We let $\overline{\mathcal{L}}_\ISE$ be the law of $\overline\mu_\ISE$ on
$\mathcal{M}_1$. 
If a probability measure $\mu\in\mathcal{M}_1$ has a continuous density $f_\mu$
with compact support, we set: 
$$W(\mu) = \int_\mathbb{R}f_\mu(x)^3dx
$$ 
Observe that $W(\overline\mu_\ISE)$ is well-defined and finite, almost surely.
\begin{definition}
We define the probability measure $\mathcal{L}^g$ on
$\mathcal{M}_1$ by the relation:
$$
d\mathcal{L}^g(\mu)= \frac{1}{Z_g}
 W(\mu)^g
d\overline{\mathcal{L}}_\ISE(\mu)
$$
where $Z_g=\mathbb{E}\left[W(\mu_\ISE)^g\right]$. 
\end{definition}
\noindent In others terms, $\mathcal{L}^g$ is such that, for all bounded and continuous
functionals $h: \mathcal{M}_1\rightarrow\mathbb{R}$, one has:
$$
\int_{\mathcal{M}_1} h(\mu)
d\mathcal{L}^g(\mu)= \frac{1}{Z_g}
\int_{\mathcal{M}_1} h(\mu)
 W(\mu)^g
d\overline{\mathcal{L}}_\ISE(\mu)
$$

Our last theorem characterizes the profile of large bipartite
pointed quadrangulations:
\begin{theorem}
\label{thm:profile}
When $n$ tends to infinity, the law $\mathcal{L}^g_n$ of $\mathfrak{p}_{q_n}$ converges to
$\mathcal{L}^g$, in the sense of weak convergence with respect to the topology
of weak convergence on $\mathcal{M}_1$.
Moreover, let $\mu^g$ be a random element of $\mathcal{M}_1$ with law
$\mathcal{L}^g$, and let $$a=\min\{|I|,\: I\mbox{ interval s.t. }\mu^g(I)=1\}$$
be the range of $\mu^g$. Then $a$ is almost surely positive and finite, and the
normalized radius $\displaystyle\frac{a_n}{\gamma^{-1} n^{1/4}}$ converges in
law to $a$.
\end{theorem}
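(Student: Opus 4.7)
The plan is to reduce the theorem, via the Marcus-Schaeffer bijection and then via the bijection $\Phi$ of Section~\ref{sec:opening}, to a statement about a \emph{size-biased} version of a uniform random labelled plane tree, and then to invoke the Bousquet-M\'elou--Janson theorem together with moment bounds as in Lemma~\ref{lemma:moments}. First, if $\mathfrak{q}$ is a uniform random element of $\mathcal{Q}^\bullet_{g,n}$ and $\tau(\mathfrak{q})=(\epsilon,\mathfrak{l})$, then $\mathfrak{l}$ is uniform on $\mathcal{L}_{g,n}$, and the Marcus-Schaeffer bijection identifies $\mathfrak{p}_{\mathfrak{q}}$ (up to a shift by the minimum label minus one) with the rescaled label distribution on the vertex set of $\mathfrak{l}$. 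By Equation~\ref{eq:dominant} it suffices to take $\mathfrak{l}$ uniform on $\mathcal{L}^*_{g,n}$, and by Corollary~\ref{cor:bijlabelled} this $\mathfrak{l}$ is then obtained from a uniform element $(\mathfrak{t},c_*,l)$ of $\mathcal{W}_{g,n}$ via the gluing $\Psi$. Since gluing identifies at most $2g$ vertices of $\mathfrak{t}$, the label distribution of $\mathfrak{l}$ differs from that of $\mathfrak{t}$ by a bounded (hence after rescaling, vanishing) error.

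The second step is to express the law of the underlying labelled tree $(\mathfrak{t},l)$ under the uniform measure on $\mathcal{W}_{g,n}$ as the uniform law on labelled trees, size-biased by the number of admissible ordered sequences $c_*$. For a labelled tree with label counts $X_n(k)$, an elementary counting gives that this number is
\[
\frac{1}{6^g}\left(\sum_{k\in\mathbb{Z}}X_n(k)^3\right)^g=\frac{\gamma^{2g}n^{5g/2}}{6^g}W_n^g,
\]
up to a relative correction of order $n^{-1/2}$ coming from the non-singularity condition (Lemma~\ref{lemma:singular}) and from the disjointness of the $c_i$'s.

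The third and analytic step goes as follows. The Bousquet-M\'elou--Janson theorem gives the weak convergence $f_n\to f_\ISE$ in $\mathcal{D}_0(\mathbb{R})$ under the uniform law on labelled trees. The functional $f\mapsto W(f)=\int f^3$ is continuous at functions supported in a fixed compact set, so using the compact-support control of $f_n$ already used in the proof of Lemma~\ref{lemma:moments}, one obtains weak joint convergence $(f_n,W_n)\to(f_\ISE,W)$. Applying Lemma~\ref{lemma:moments} at exponent $g+1$ yields uniform integrability of $W_n^g$, so a standard change-of-measure argument shows that under the law size-biased by $W_n^g$, the shifted rescaled profile converges weakly in $\mathcal{D}_0$ to the density of $\overline\mu_\ISE$ under $\mathcal{L}^g$ (noting that $W$ is invariant under translation). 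Integrating against an arbitrary bounded continuous test functional on $\mathcal{M}_1$ then yields the claimed weak convergence $\mathcal{L}^g_n\to\mathcal{L}^g$.

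For the radius statement, after Marcus-Schaeffer $a_n$ equals (up to one) the range of labels in $\mathfrak{l}$, which after rescaling is the length of the support of the shifted rescaled profile. The support-length functional is continuous at compactly supported measures with connected support, a property inherited by $\mu^g$ from $\overline\mu_\ISE$; combined with the profile convergence, this yields the convergence in law of $\gamma n^{-1/4}a_n$ to the range $a$ of $\mu^g$. Finiteness and strict positivity of $a$ follow from the corresponding properties of $\overline\mu_\ISE$ via the absolute continuity $d\mathcal{L}^g/d\overline{\mathcal{L}}_\ISE=W^g/Z_g$, which is almost surely positive and finite. The main obstacle in this plan lies in the third step: making rigorous the passage from weak convergence of $f_n$ to the corresponding convergence under size-biasing by $W_n^g$. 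The uniform-integrability estimate supplied by Lemma~\ref{lemma:moments} (applied one exponent above $g$) is exactly what is needed to carry this through.
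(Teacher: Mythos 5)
Your overall route is the same as the paper's: reduce via Marcus--Schaeffer and the bijection $\Psi$ to labelled trees size-biased by $\left(\sum_k X_n(k)^3\right)^g$, then combine the Bousquet-M\'elou--Janson convergence $f_n\to f_\ISE$ with the moment convergence of Lemma~\ref{lemma:moments} to pass the weak convergence through the change of measure. The combinatorial steps and the uniform-integrability/Scheff\'e mechanism you describe match the paper's proof. However, there is a genuine gap in your third step, and it is precisely the point the paper has to work hardest on. The profile $\mathfrak{p}_{t_n}$ is the label histogram \emph{shifted by the minimum label} $\lambda_{t_n}$ (equivalently by $l_n$, the left endpoint of the support of $f_n$), and the radius is governed by $r_n-l_n$. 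Neither $f\mapsto\inf(\mathrm{supp}\, f)$ nor the support-length functional is continuous for the uniform topology on $\mathcal{D}_0(\mathbb{R})$ (nor for weak convergence of measures): uniform convergence $f_n\to f$ only gives $\limsup_n l_n\le l$ and $\liminf_n r_n\ge r$, since $f_n$ may carry vanishingly small bumps far from the support of $f$. So your assertions that ``the shifted rescaled profile converges weakly'' as a consequence of $f_n\to f_\ISE$, and that ``the support-length functional is continuous at compactly supported measures with connected support,'' do not hold as stated; only one-sided bounds follow.

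The paper closes this gap with Lemma~\ref{lemma:probabilityspace}: it imports from \cite{ChassaingSc} the separate convergences in law $l_n\to l$ and $r_n\to r$, deduces tightness of the triple $(f_n,l_n,r_n)$, applies Skorokhod's representation theorem along a subsequence, and then uses the one-sided inequalities $\hat l'\le\hat l\le\hat r\le\hat r'$ together with equality of the marginal distributions to force $\hat l=\hat l'$ and $\hat r=\hat r'$ almost surely, yielding the \emph{joint} convergence $(f_n,l_n,r_n)\to(f_\ISE,l,r)$. Only with this joint convergence in hand can one conclude that $\tilde{\mathfrak{p}}_n\to\overline{\tilde\mu}$ almost surely and that the normalized radius converges. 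Your argument needs this additional input (or an equivalent substitute); without it, both the identification of the limit of the shifted profile and the radius statement are unjustified. A secondary, minor point: the relative error in replacing $|\mathcal{W}_{g,n}|$ by $6^{-g}|\mathcal{R}_{g,n}|$ should be taken from the second statement of Lemma~\ref{lemma:enumskeleton} (which accounts for the conditioning on the label equalities and gives $O(n^{-1/4})$), not from Lemma~\ref{lemma:singular}, which concerns unlabelled trees with unconditioned random vertices.
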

The rest of this section is devoted to the proof of the theorem. For details on the convergence of probability measures, we refer the reader to \cite{Billingsley-convergence}.

\subsection{Proof of Theorem~\ref{thm:profile}.}

For any bounded continuous real function $h$, and probability measure $\nu\in\mathcal{M}_1$, we note 
$\langle h,\nu\rangle=\int_\mathbb{R} h(x)\nu(dx)$.
Then, in order to prove that $\mathcal{L}^g_n$ converges to $\mathcal{L}^g$, it is sufficient to prove that 
$\langle h,\mathfrak{p}_{\mathfrak{q}_n}\rangle$ converges in distribution to $\langle h,\mu^g\rangle$ for every bounded continuous function $h$ (see for example~\cite[p.~264]{Kallenberg}). Moreover, it is sufficient to take $h$ in a \emph{countable dense} subset of the set of all continuous bounded functions on $\mathbb{R}$ (this, to avoid problems handling events of null probability in forthcoming uses of Skorokhod's representation theorem). We fix such a function $h$.

Now, in order to  to prove that $\langle h,\mathfrak{p}_{\mathfrak{q}_n}\rangle$ converges in ditribution to $\langle h, \mu^g\rangle$, it is sufficient to prove that 
\begin{eqnarray}\label{eq:convergenceM1}
\mathbb{E}\left[H(\langle h,\mathfrak{p}_{\mathfrak{q}_n}\rangle)\right]\rightarrow\mathbb{E}\left[H(\langle h,\mu^g\rangle)\right]
\end{eqnarray}
for all bounded \emph{uniformly} continuous real functions $H$ (see~\cite{Billingsley-convergence}). As before, we fix such a function $H$ in a dense countable subset of bounded uniformly continous real functions.

\subsubsection*{\bf step 1: using the bijection}

We have by definition:
\begin{eqnarray*}
\mathbb{E}\left[H(\langle h,\mathfrak{p}_{\mathfrak{q}_n}\rangle)\right]
&=&\frac{1}{|\mathcal{Q}_{g,n}^\bullet|}\sum_{q_n\in\mathcal{Q}_{g,n}^\bullet}
H(\langle h,\mathfrak{p}_{q_n}\rangle)\\
&=& \frac{1}{|\mathcal{Q}_{g,n}^\bullet|}\sum_{q_n\mbox{\scriptsize
dominant}} H(\langle h,\mathfrak{p}_{q_n}\rangle) \left(1+O(n^{-1/4})\right)
\end{eqnarray*}
where the sum is taken only on those quadrangulations whose associated
labelled unicellular map is dominant (we have used
Equation~\ref{eq:dominant} and that $H$ is bounded). Now, by
Proposition~\ref{prop:2^gg!}, $$\sum_{q_n\mbox{\scriptsize dominant}}
H(\langle h,\mathfrak{p}_{q_n})\rangle=\frac{1}{2^gg!}\sum_{(q_n,v_*)}H( \langle h,\mathfrak{p}_{q_n}\rangle)$$ 
where the second sum is taken over all pairs $(q_n,v_*)$ where $q_n$ is a
dominant quadrangulation and $v_*$ is an opening sequence of the unicellular
map associated to $q_n$. 

Thanks to Corollary~\ref{cor:bijlabelled}, it is possible to reformulate this
last sum in terms of labelled trees. First, if $t$ is a labelled tree of size
$n$ we let, for all $k$, $X_t(k)$ be its number of vertices of label $k$. We
set $\lambda_t=\inf\{k,\:X_t(k)\neq 0 \}$ and we define the real probability
measure: $$
\mathfrak{p}_{t} =  \frac{1}{n+1}\sum_{k=0}^\infty
X_{t}(\lambda_t+k-1) \delta_{\gamma^{}n^{-1/4}k}
$$
Now, if $(t,c_*)$ is an element of $\mathcal{R}_{g,n}$, we define the measure:
$$\mathfrak{q}_{t,c_*} = \frac{\delta_0 -
2\sum_{i=1}^g\delta_{\gamma^{}n^{-1/4}(l(c_i)-\lambda_t+1)}}{n+1}.$$
Observe that from Theorem~\cite{MaSc}, the measure
$\frac{n+1}{n+2-2g}(\mathfrak{p}_{t_n}+\mathfrak{q}_{t_n,c_*})$ is exactly the
profile of the quadrangulation associated to $t_n$ via our bijection and 
Marcus-Schaeffer's bijection. Indeed
the correction measure $\mathfrak{q}_{t_n,c_*}$ accounts first for the fact that
during the gluing operation, each triple of marked vertices gives birth to only
one vertex of the unicellular map (so that two vertices disappear in the
operation), and then for the pointed vertex of label $0$, which is not present
in the labelled unicellular map, but is in the quadrangulation.

Then we have from Corollary~\ref{cor:bijlabelled}:
$$\sum_{q_n\mbox{\scriptsize
dominant}} H(\langle h,\mathfrak{p}_{q_n}\rangle) =\frac{2}{2^gg!}
\sum_{(t_n,c_*)\in\mathcal{W}_{g,n}}
H\left(
\frac{n+1}{n+2-2g}(\langle h,\mathfrak{p}_{t_n}\rangle+\langle h,\mathfrak{q}_{t_n,c_*}\rangle)
\right) 
$$
Now,  on the one hand, we have $|\langle h,\mathfrak{q}_{t_n,c_*}\rangle| \leq \frac{(2g+1) \|h\|}{(n+1)}$, so that the fact that $H$ is uniformly continuous implies that:
$$
\sum_{q_n\mbox{\scriptsize
dominant}} H(\langle h,\mathfrak{p}_{q_n}\rangle) 
=
\left(\frac{2}{2^gg!}
\sum_{(t_n,c_*)\in\mathcal{W}_{g,n}}
H
(\langle h,\mathfrak{p}_{t_n}\rangle)
\right)
\Big(1+o
\left(1\right)
\Big).
$$
On the other hand, using the second statement of Lemma~\ref{lemma:enumskeleton} and the fact that $H$ is bounded, we obtain:
$$
\sum_{(t_n,c_*)\in\mathcal{W}_{g,n}}
H(\langle h,\mathfrak{p}_{t_n}\rangle) 
= \frac{1}{6^g}
\sum_{(t_n,c_*)\in\mathcal{R}_{g,n}}
H(\langle h,\mathfrak{p}_{t_n}\rangle)
\left(1+O(n^{-1/4})\right)
$$
Moreover, each labelled tree $t$ corresponds to exactly $\left(\sum_k X_t(k)^3\right)^g$ distinct
elements of $\mathcal{R}_{g}$, so that:
\begin{eqnarray*}
\sum_{(t_n,c_*)\in\mathcal{R}_{g,n}}
H(\langle h,\mathfrak{p}_{t_n}\rangle)
=\sum_{t_n\mbox{ \scriptsize labelled}}\left(
\sum_k X_{t_n}(k)^3
\right)^g
H(\langle h,\mathfrak{p}_{t_n}\rangle).
\end{eqnarray*}
Putting everything together gives:
$$
\mathbb{E}\left[H(\langle h,\mathfrak{p}_{\mathfrak{q}_n}\rangle)\right]
=\frac{2(1+o(1))}{12^g g! |\mathcal{Q}^\bullet_{g,n}|}
\sum_{t_n\mbox{ \scriptsize labelled}}
\left(
\sum_k X_{t_n}(k)^3
\right)^g
H(\langle h,\mathfrak{p}_{t_n}\rangle)
$$
Now, from the expression of $t_g$ given in Theorem~\ref{thm:ISE}, we have 
$|\mathcal{Q}^\bullet_{g,n}|\sim \frac{2\cdot\gamma^{2g}\cdot Z_g}{12^gg!\sqrt{\pi}} n^{\frac{5g-3}{2}}12^n$, whereas the total number of labelled trees is equivalent to $\frac{2}{\sqrt{\pi}}n^{-\frac{3}{2}}12^n$ (this corresponds to $g=0$). Consequently, we can rewrite:
\begin{eqnarray}
\label{eq:onlytrees}
\mathbb{E}\left[H(\langle h,\mathfrak{p}_{\mathfrak{q}_n}\rangle)\right]
=\frac{1}{Z_g}
\frac{\sum_{t_n}\left(
\frac{1}{\gamma^2n^{5/2}}\sum_k X_{t_n}(k)^3
\right)^g
H(\langle h,\mathfrak{p}_{t_n}\rangle)}{|\{\mbox{rooted labelled trees with } n
\mbox{ edges}\}|} (1+o(1))
\end{eqnarray}

\subsubsection*{\bf step 2: the convergence}

We let, for all $n\geq 0$, $\mathfrak{t}_n$ be a rooted labelled tree chosen
uniformly at random among rooted labelled trees with $n$ edges, and we let as before:
$$
f_n(x) = \frac{X_{\mathfrak{t}_n}([\gamma^{-1}n^{1/4}x])}{\gamma n^{3/4}}, \ \ x\in\mathbb{R}.
$$ 
We also let $W_n=\frac{1}{\gamma^2 n^{5/2}}\sum X_n(k)^3=\int_{-\infty}^\infty f_n(x)^3dx$.
 Observe that
Equation~\ref{eq:onlytrees} rewrites:
\begin{eqnarray}\label{eq:onlytreesexp}
\mathbb{E}\left[H(\langle h,\mathfrak{p}_{\mathfrak{q}_n}\rangle)\right]
= \lim_n \ \frac{1}{Z_g}
\mathbb{E}\left[{W_n}^g H(\langle h,\mathfrak{p}_{t_n}\rangle)\right].
\end{eqnarray}
We let $[l_n,r_n]$ be the support of $f_n$ (i.e. $[l_n,r_n]$ is the intersection of all the real intervals outside which $f_n$ is identically $0$).
It is known (see \cite{ChassaingSc}) that $l_n$ and $r_n$ converge in law to
$r$ and $l$. However, we need a little more, namely to control the joint
convergence of $l_n$, $r_n$ and $f_n$. We have:
\begin{lemma}
\label{lemma:probabilityspace}
There exists a probability space $(\tilde \Omega,\tilde{\mathcal{F}},\tilde P)$, and random variables 
$\tilde f$ and $(\tilde f_n)_{n\geq 0}$ on that space, such that $\tilde f =_d f_\ISE$, $\tilde f_n=_d f_n$ and that if 
$[\tilde l_n,\tilde r_n]$ and $[\tilde l,\tilde r]$ denote respectively  the support of  $\tilde f_n$ and $\tilde f$, then the triple $(\tilde f_n, \tilde l_n, \tilde r_n)$ converges almost surely to $(\tilde f,\tilde l,\tilde r)$ in $\mathcal{D}_0(\mathbb{R})\times\mathbb{R}\times\mathbb{R}$.
\end{lemma}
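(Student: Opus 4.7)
The plan is to establish joint convergence in distribution of $(f_n, l_n, r_n)$ to $(f_\ISE, l, r)$ in $\mathcal{D}_0(\mathbb{R}) \times \mathbb{R}^2$, and then to invoke Skorokhod's representation theorem to transfer this to almost sure convergence on a common probability space.

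Tightness of the joint laws follows immediately from tightness of the marginals: Bousquet-M\'elou and Janson provide tightness of $\{f_n\}$ in $\mathcal{D}_0(\mathbb{R})$, while the convergence $(l_n, r_n) \to (l,r)$ established by Chassaing and Schaeffer implies tightness in $\mathbb{R}^2$. By Prokhorov's theorem, every subsequence admits a further subsequence along which $(f_n, l_n, r_n)$ converges in law to some triple $(f_\infty, l_\infty, r_\infty)$ whose marginals are necessarily $f_\ISE$ and $(l,r)$.

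The key step is to show that any such subsequential limit automatically satisfies $l_\infty = l_{f_\infty}$ and $r_\infty = r_{f_\infty}$ almost surely, where $l_g := \inf \mathrm{supp}(g)$ and $r_g := \sup \mathrm{supp}(g)$. Applying Skorokhod's representation on the subsequence, which is legitimate since the limit law is concentrated on the separable subspace $C_0(\mathbb{R}) \times \mathbb{R}^2$ (because $f_\ISE$ is almost surely continuous with compact support), one obtains a coupling along which $f_{n_k} \to f_\infty$ uniformly and $(l_{n_k}, r_{n_k}) \to (l_\infty, r_\infty)$ almost surely. Continuity of $f_\infty$ near $l_{f_\infty}$ produces, for every $\epsilon > 0$, a point $x \in (l_{f_\infty}, l_{f_\infty} + \epsilon)$ with $f_\infty(x) > 0$; uniform convergence then forces $f_{n_k}(x) > 0$ for $k$ large, giving $l_{n_k} \leq x$ and in the limit $l_\infty \leq l_{f_\infty}$ almost surely (and dually $r_\infty \geq r_{f_\infty}$). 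Since $l_\infty$ and $l_{f_\infty}$ share the marginal distribution of $l$, this one-sided a.s.\ bound combined with equality in distribution forces $l_\infty = l_{f_\infty}$ almost surely: for every real $t$ one has $\mathbbm{1}_{\{l_\infty > t\}} \leq \mathbbm{1}_{\{l_{f_\infty} > t\}}$ while these indicators have equal expectations, so they coincide a.s., and taking $t$ over the rationals yields the claim. The same argument handles the right endpoint, identifying the joint limit uniquely as $(f_\ISE, l, r)$ and giving the joint convergence of the full sequence.

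The argument concludes with a final application of Skorokhod's representation theorem, now to the identified joint convergence in $\mathcal{D}_0(\mathbb{R}) \times \mathbb{R}^2$, producing the probability space $(\tilde\Omega, \tilde{\mathcal{F}}, \tilde P)$ and coupled random variables satisfying the desired almost sure convergence. The main technical obstacle will be the identification step above: it genuinely uses both the almost sure continuity of $f_\ISE$ (so that the support endpoints can be recovered from one-sided semicontinuity under uniform convergence) and the simple but essential observation that one-sided a.s.\ domination together with equality in distribution upgrades to a.s.\ equality.
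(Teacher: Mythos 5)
Your proposal is correct and follows essentially the same route as the paper's own proof: tightness of the triple, Skorokhod along a subsequence, the one-sided inequalities between the limit of the support endpoints and the support endpoints of the limit via uniform convergence, the upgrade from one-sided a.s.\ domination plus equality in law to a.s.\ equality, uniqueness of the subsequential limit, and a final application of Skorokhod. The only cosmetic difference is that you establish the a.s.\ equality of the endpoints via indicator functions $\mathbbm{1}_{\{l_\infty>t\}}$ over rational $t$, where the paper truncates with $\wedge M$; both are standard and equivalent here.
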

We postpone the proof of the lemma, and we continue the proof of the theorem.

\noindent $\bullet$ First, we define the probability measure: 
$$
\tilde{\mathfrak{p}}_{n} :=
\frac{1}{n+1} \sum_{k\geq 0} \gamma n^{3/4} \tilde f _n \left(\frac{k}{\gamma^{-1} n^{1/4}} + \tilde l_n\right)
\delta_{\gamma n^{-1/4}(k+1)}.
$$
Then clearly, the $4$-tuple $(\tilde f_n, \tilde l_n, \tilde r_n, \tilde{\mathfrak{p}}_{n})$ has the same law as 
$(f_n,l_n,r_n,\mathfrak{p}_{\mathfrak{t}_n})$.
Moreover, if we define the probability measure $\overline {\tilde \mu}$ by $\overline {\tilde \mu}(dx) = \tilde f(x+\tilde l) dx$, then $\tilde{\mathfrak{p}}_n$ converges almost surely to $\overline{\tilde \mu}$ (in the sense of weak convergence).
Indeed, if $u$ is a bounded and uniformly continuous real function, one has:
\begin{eqnarray*}
\langle u, \tilde{\mathfrak{p}}_n \rangle &=& 
\frac{\gamma n^{3/4}}{n+1} \sum_{k\geq 0} 
\tilde f_n \left(\frac{k}{\gamma^{-1}n^{1/4}}+\tilde l_n\right) u\left(\frac{k+1}{\gamma^{-1}n^{1/4}}\right) \\
&=&\frac{n}{n+1} \int_\mathbb{R} 
\tilde f_n(x+\tilde l_n)  u\left(\frac{[\gamma^{-1}n^{1/4}x]+1}{\gamma^{-1}n^{1/4}}\right)dx \\
&=&\frac{n}{n+1} \int_\mathbb{R} 
\tilde f_n(x)  u\left(\frac{[\gamma^{-1}n^{1/4}(x-\tilde l_n)]+1}{\gamma^{-1}n^{1/4}}\right)dx.
\end{eqnarray*}
Since $u$ is uniformly continuous, and since $\tilde f_n \rightarrow \tilde f$ and $\tilde l_n \rightarrow \tilde l$, almost surely, this last quantity tends almost surely to $\int_\mathbb{R} \tilde f(x) u(x-\tilde l) dx=\langle u, \overline{\tilde \mu}\rangle$. Hence, taking $u$ along a countable dense subset of real bounded continuous functions shows that $\tilde{\mathfrak{p}}_n$ converges a.s. to $\overline{\tilde \mu}$, as claimed.
\\
\noindent $\bullet$ 
We let $\tilde W_n:= \int_\mathbb{R} \tilde f_n (x)^3 dx$ and $\tilde W:= \int_\mathbb{R} \tilde f (x)^3 dx$, so that we have 
$\mathbb{E}\left[W_n^g H(\langle h, \mathfrak{p}_{\mathfrak{t}_n}\rangle)\right]=
\mathbb{E}\left[\tilde W_n^g H(\langle h, \tilde{\mathfrak{p}}_{n}\rangle)\right]$ and 
$\mathbb{E}\left[W^g H(\langle h, \overline{\mu}_\ISE\rangle)\right]=
\mathbb{E}\left[\tilde W^g H(\langle h, \overline{\tilde \mu}\rangle)\right]$.
We now write:
\begin{eqnarray*}
\lefteqn{
\left|\mathbb{E}\left[\tilde W_n^g H(\langle h, \tilde{\mathfrak{p}}_{n}\rangle)\right]-
\mathbb{E}\left[\tilde W^g H(\langle h, \overline{\tilde \mu}\rangle)\right]\right|
} 
\\ &  &
\leq 
\underbrace{
\left|\mathbb{E}\left[ (\tilde W_n^g -\tilde W^g)H(\langle h, \tilde{\mathfrak{p}}_{n}\rangle)\right]
\right|}_{T_1}
+
\underbrace{
\left|\mathbb{E}\left[\tilde W^g (H(\langle h, \tilde{\mathfrak{p}}_{n}\rangle)
-H(\langle h, \overline{\tilde \mu}\rangle))
\right]
\right|}_{T_2}.
\end{eqnarray*}
In order to bound the first term, we write $|T_1|\leq \|H\| \mathbb{E}\left[|\tilde W_n^g -\tilde W^g|\right]$.
This last quantity tends to $0$ from the two facts that
that $\mathbb{E}\left[\tilde W_n^g\right]\rightarrow\mathbb{E}\left[\tilde W^g\right]$, and that $\tilde W_n$ converges almost surely to $\tilde W$ (which is a direct consequence of the a.s. convergence of the triple $(\tilde f_n,\tilde l_n,\tilde r_n)$). Indeed, this follows from applying Fatou's lemma to the nonnegative random variable 
$\tilde W_n^g+\tilde W^g-|\tilde W_n^g-\tilde W^g|$ (this argument is also known as Scheffe's lemma).

As for the second term $T_2$, it tends to $0$ by dominated convergence. Indeed, $H(\langle h, \tilde{\mathfrak{p}}_{n}\rangle)-H(\langle h, \overline{\tilde \mu}\rangle)$ tends to $0$ almost surely since $\tilde{\mathfrak{p}}_n$ converges a.s. weakly to $\overline{\tilde \mu}$, and moreover the integrand is bounded by $2\|H\|\tilde W^g$, which we know is integrable.

Therefore we have proved that:
$$\left|
\mathbb{E}\left[\tilde W^g H(\langle h,\overline{\tilde \mu}\rangle) \right]-
\mathbb{E}\left[\tilde W_n^g H(\langle h,\tilde{\mathfrak{p}}_n\rangle) \right]\right|
\longrightarrow 0,
$$
which, together with Equation~\ref{eq:onlytreesexp}, gives Equation~\ref{eq:convergenceM1}. This proves (up to Lemma~\ref{lemma:probabilityspace}) the convergence of $\mathcal{L}^g_n$ to $\mathcal{L}^g$. The proof of convergence of the radius goes along the same lines.

\subsection{Proof of Lemma~\ref{lemma:probabilityspace}}

From \cite{MBM-Janson}, we know that $f_n$ converges to $f_\ISE$ in distribution, and from \cite{ChassaingSc}, that $l_n$ and $r_n$ converge respectively to $l$ and $r$, in distribution. It follows that the sequence of triples $(f_n,l_n,r_n)_{n\geq 1}$ is \emph{tight} in the space $\mathcal{D}_0(\mathbb{R})\times \mathbb{R} \times \mathbb{R}$.
Up to extraction, we can therefore assume that this sequence converges in distribution to some triple $(f,l',r')$, such that 
$f=_d f_\ISE$, $l'=_d l$, $r' =_d r$. By Skorokhod's representation theorem%
\footnote{The space $\mathcal{D}_0(\mathbb{R})$ equipped with the uniform norm is not separable, but the law of 
$f_\ISE$ is supported on the subset $\mathcal{C}_K(\mathbb{R})$ of continuous functions with compact support, which is. Hence Skorokhod's representation theorem applies.}%
, there exists a probability space, and random variables $\hat f_n, \hat f, \hat l', \hat r'$, such that $\hat f_n =_d f_n$, $\hat f =_d f_\ISE$, $\hat l' =_d l$, $\hat r' =_d r$, and such that if $[\hat l_n, \hat r_n]$ denotes the support of $\hat f_n$, we have the almost sure convergence $(\hat f_n, \hat l_n, \hat r_n) \rightarrow (\hat f, \hat l', \hat r')$, along the extraction mentionned above.
 
Now, let $[\hat l,\hat r]$ be the support of $\hat f$. The fact that $\hat f_n\rightarrow \hat f$ uniformly implies that (always along the same extraction):
$$
\limsup_n \hat l_n \leq \hat l \leq \hat r \leq \liminf_n \hat r_n, 
$$
which, since $(\hat l_n,\hat r_n)\rightarrow (\hat r', \hat l')$, gives:
$\hat l' \leq \hat l \leq \hat r \leq \hat r'$.
Now, since $\hat r$ and $\hat r'$ have the same distribution, we have for every $M>0$, 
$\mathbb{E}\left[\hat r'\wedge M - \hat r \wedge M\right]=0$. The quantity in the expectation being nonnegative, it follows that $\hat r'\wedge M = \hat r \wedge M$ almost surely, so that by letting $M$ tend to infinity we obtain that $\hat r=\hat r'$ almost surely. Similarly we have $\hat l=\hat l'$ almost surely. It follows that, along the aforementionned extraction, the \emph{triple} $(\hat f_n,\hat l_n,\hat r_n)$ converges in distribution to $(\hat f,\hat l,\hat r)$, or equivalently that 
$(f_n,l_n,r_n)$ converges in distribution to $(f_\ISE, l, r)$.

But, since the limit $(f_\ISE,l,r)$ does not depend on the extraction, it follows that this convergence actually holds when $n$ tends to infinity, without considering extractions anymore:
$$
(f_n,l_n,r_n) \stackrel{d}{\longrightarrow} (f_\ISE,l,r), \ \textrm{ when }n \rightarrow \infty$$
The lemma follows by a last application of Skorokhod's representation theorem.

\section*{A concluding remark}

It is known that a (sort of)  generating series of the $t_g$'s satisfies a
remarquable differential equation of Painlev\'e-I type,
which enables in particular to compute very easily
$t_g$ at any order. Precisely, if we set:
$$
u(y) = -\sum_{g\geq 0} 4^{g-1}\Gamma(\frac{5g-1}{2}) t_g
y^{\frac{1-5g}{2}} $$
then (see~\cite{LaZv}, page 201) $u$ satisfies the Painlev\'e-I
equation: \begin{eqnarray}
\label{eq:painleve}
y=u(y)^2+u''(y).
\end{eqnarray}
Now, observe that from Theorem~\ref{thm:ISE}, we can express $u$
as a functional transform of the random variable $W(\mu_\ISE)$:
$$
u(y)=-\frac{y^{1/2}}{2\sqrt{\pi}}\mathbb{E}
\left[
\sum_{g\geq 0} \frac{\Gamma(\frac{5g-1}{2})}{g!}
\left(\frac{1}{\sqrt{2}y^{5/2}}W(\mu_{\ISE})\right)^g
\right].
$$
We hope that this opens the way to a new derivation of
Equation~\ref{eq:painleve}, via the theory of superprocesses. This would be an
important achievement of the enumerative theory of maps via labelled
trees.

\section*{Acknowledgements.} 
We thank an anonymous referee who greatly helped improving the probabilistic part of the paper. In particular, our first proof of Lemma~\ref{lemma:probabilityspace} was less conceptual than the one (s)he suggested, which is reproduced here. (S)he also helped fixing some problems in the proofs of Theorems~\ref{thm:ISE} and~\ref{thm:profile}.

Thanks also to Jean-Fran\c cois Marckert  and to my advisor Gilles Schaeffer, for stimulating discussions.

\bibliographystyle{alpha}
\bibliography{UnicellularOpening}

\begin{thebibliography}{BDFG04}

\bibitem[Ald93]{Aldous-treebasedmodels}
David Aldous.
\newblock Tree-based models for random distribution of mass.
\newblock {\em J. Statist. Phys.}, 73(3-4):625--641, 1993.

\bibitem[BC86]{BeCa0}
Edward~A. Bender and E.~Rodney Canfield.
\newblock The asymptotic number of rooted maps on a surface.
\newblock {\em J. Combin. Theory Ser. A}, 43(2):244--257, 1986.

\bibitem[BDFG02]{BDFG:census}
J.~Bouttier, P.~Di~Francesco, and E.~Guitter.
\newblock Census of planar maps: from the one-matrix model solution to a
  combinatorial proof.
\newblock {\em Nuclear Phys. B}, 645(3):477--499, 2002.

\bibitem[BDFG03]{BDFG:distances}
J.~Bouttier, P.~Di~Francesco, and E.~Guitter.
\newblock Geodesic distance in planar graphs.
\newblock {\em Nuclear Phys. B}, 663(3):535--567, 2003.

\bibitem[BDFG04]{BDFG}
J.~Bouttier, P.~Di~Francesco, and E.~Guitter.
\newblock Planar maps as labeled mobiles.
\newblock {\em Electron. J. Combin.}, 11(1):Research Paper 69, 27 pp.
  (electronic), 2004.

\bibitem[BG08]{BG:3points}
J{\'er\'e}mie Bouttier and Emmanuel Guitter.
\newblock The three-point function of planar quadrangulations.
\newblock {\em arXiv:0805.2355 [math-ph]}, 2008.

\bibitem[BGR08]{Bender-Gao-Richmond:tg}
E.~Bender, Z.C. Gao, and L.B. Richmond.
\newblock The map asymptotics constant {$t_g$}.
\newblock {\em Electron. J. Combin.}, 15, 2008.

\bibitem[Bil68]{Billingsley-convergence}
Patrick Billingsley.
\newblock {\em Convergence of probability measures}.
\newblock John Wiley \& Sons Inc., New York, 1968.

\bibitem[BMJ06]{MBM-Janson}
Mireille Bousquet-M{\'e}lou and Svante Janson.
\newblock The density of the {ISE} and local limit laws for embedded trees.
\newblock {\em Ann. Appl. Probab.}, 16(3):1597--1632, 2006.

\bibitem[BMS00]{MBM-Sc}
Mireille Bousquet-M{\'e}lou and Gilles Schaeffer.
\newblock Enumeration of planar constellations.
\newblock {\em Adv. in Appl. Math.}, 24(4):337--368, 2000.

\bibitem[BV02]{Bacher-Vdovina}
Roland Bacher and Alina Vdovina.
\newblock Counting 1-vertex triangulations of oriented surfaces.
\newblock {\em Discrete Math.}, 246(1-3):13--27, 2002.
\newblock Formal power series and algebraic combinatorics (Barcelona, 1999).

\bibitem[Cha08]{Chapuy:constellations}
Guillaume Chapuy.
\newblock Asymptotic enumeration of constellations and related families of maps
  on orientable surfaces.
\newblock {\em arXiv:0805.0352}, 2008.

\bibitem[CMS07]{ChMaSc}
Guillaume Chapuy, Michel Marcus, and Gilles Schaeffer.
\newblock On the number of rooted maps on orientable surfaces.
\newblock {\em arXiv:0712.3649 [math.CO]}, 2007.

\bibitem[CS04]{ChassaingSc}
Philippe Chassaing and Gilles Schaeffer.
\newblock Random planar lattices and integrated super{B}rownian excursion.
\newblock {\em Probab. Theory Related Fields}, 128(2):161--212, 2004.

\bibitem[FO90]{FlOd}
Philippe Flajolet and Andrew Odlyzko.
\newblock Singularity analysis of generating functions.
\newblock {\em SIAM J. Discrete Math.}, 3(2):216--240, 1990.

\bibitem[GJ08]{Goulden-Jackson:tg}
I.P. Goulden and D.M. Jackson.
\newblock The {KP} hierarchy, branched covers, and triangulations.
\newblock {\em arXiv:0803.3980}, 2008.

\bibitem[GS98]{Goupil-Schaeffer}
Alain Goupil and Gilles Schaeffer.
\newblock Factoring {$n$}-cycles and counting maps of given genus.
\newblock {\em European J. Combin.}, 19(7):819--834, 1998.

\bibitem[HZ86]{Harer-Zagier}
J.~Harer and D.~Zagier.
\newblock The {E}uler characteristic of the moduli space of curves.
\newblock {\em Invent. Math.}, 85(3):457--485, 1986.

\bibitem[Jac87]{Jackson:countingcycles}
D.~M. Jackson.
\newblock Counting cycles in permutations by group characters, with an
  application to a topological problem.
\newblock {\em Trans. Amer. Math. Soc.}, 299(2):785--801, 1987.

\bibitem[Kal97]{Kallenberg}
Olav Kallenberg.
\newblock {\em Foundations of modern probability}.
\newblock Probability and its Applications (New York). Springer-Verlag, New
  York, 1997.

\bibitem[LG07]{LG}
Jean-Fran{\c{c}}ois Le~Gall.
\newblock The topological structure of scaling limits of large planar maps.
\newblock {\em Inventiones Mathematica}, 169:621--670, 2007.

\bibitem[LG08]{LG:Geodesiques}
Jean~Fran{\c{c}}ois Le~Gall.
\newblock Geodesics in large planar maps and in the brownian map.
\newblock {\em arXiv:0804.3012}, 2008.

\bibitem[LGP07]{LGPa}
Jean-Fran{\c{c}}ois Le~Gall and Fr{\'e}d{\'e}ric Paulin.
\newblock Scaling limits of bipartite planar maps are homeomorphic to the
  2-sphere.
\newblock {\em preprint (see www.dma.ens.fr/~legall/)}, 2007.

\bibitem[LZ04]{LaZv}
Sergei~K. Lando and Alexander~K. Zvonkin.
\newblock {\em Graphs on surfaces and their applications}, volume 141 of {\em
  Encyclopaedia of Mathematical Sciences}.
\newblock Springer-Verlag, Berlin, 2004.
\newblock With an appendix by Don B. Zagier, Low-Dimensional Topology, II.

\bibitem[Mie07]{Miermont:Geodesiques}
Gr{\'e}gory Miermont.
\newblock Tessellations of random maps of arbitrary genus.
\newblock {\em arXiv:0712.3688 [math.PR]}, 2007.

\bibitem[MM06]{Marckert-Mokkadem-BrownianMap}
Jean-Fran{\c{c}}ois Marckert and Abdelkader Mokkadem.
\newblock Limit of normalized quadrangulations: the {B}rownian map.
\newblock {\em Ann. Probab.}, 34(6):2144--2202, 2006.

\bibitem[MS01]{MaSc}
Michel Marcus and Gilles Schaeffer.
\newblock Une bijection simple pour les cartes orientables.
\newblock {\em manuscript (see www.lix.polytechnique.fr/~schaeffe/Biblio/)},
  2001.

\bibitem[MT01]{Mohar-Thomassen}
Bojan Mohar and Carsten Thomassen.
\newblock {\em Graphs on surfaces}.
\newblock Johns Hopkins Studies in the Mathematical Sciences. Johns Hopkins
  University Press, Baltimore, MD, 2001.

\bibitem[Sch98]{Sc:PhD}
Gilles Schaeffer.
\newblock {\em Conjugaison d'arbres et cartes combinatoires al{\'e}atoires,PhD
  thesis}.
\newblock PhD thesis, Université Bordeaux I, 1998.

\bibitem[Tut63]{Tutte:census}
W.~T. Tutte.
\newblock A census of planar maps.
\newblock {\em Canad. J. Math.}, 15:249--271, 1963.

\bibitem[WL72]{Lehman-Walsh-genus-I}
T.~R.~S. Walsh and A.~B. Lehman.
\newblock Counting rooted maps by genus. {I}.
\newblock {\em J. Combinatorial Theory Ser. B}, 13:192--218, 1972.

\end{thebibliography}

\label{sec:biblio}

\end{document}